\author{Tuomas Orponen}
\title{Absolute continuity and $\alpha$-numbers on the real line}
\address{University of Helsinki, Department of Mathematics and Statistics}
\subjclass[2010]{42A99 (Primary)}
\thanks{The research was supported by the grants 274512 and 309365 of the Finnish Academy.}
\email{tuomas.orponen@helsinki.fi}
\newcommand{\R}{\mathbb{R}}
\newcommand{\N}{\mathbb{N}}
\newcommand{\W}{\mathbb{W}}
\newcommand{\Z}{\mathbb{Z}}
\newcommand{\calT}{\mathcal{T}}
\newcommand{\calL}{\mathcal{L}}
\newcommand{\calD}{\mathcal{D}}
\newcommand{\calB}{\mathcal{B}}
\newcommand{\calS}{\mathcal{S}}
\newcommand{\spt}{\operatorname{spt}}
\newcommand{\card}{\operatorname{card}}
\newcommand{\dist}{\operatorname{dist}}
\newcommand{\Leaves}{\mathbf{Leaves}}
\newcommand{\Top}{\mathbf{Top}}
\newcommand{\ch}{\mathbf{ch}}
\newcommand{\calG}{\mathcal{G}}
\newcommand{\Tail}{\mathbf{Tail}}
\newcommand{\Tip}{\mathbf{Tip}}
\numberwithin{equation}{section}
\theoremstyle{plain}
\newtheorem{thm}[equation]{Theorem}
\newtheorem{lemma}[equation]{Lemma}
\newtheorem{ex}[equation]{Example}
\newtheorem{cor}[equation]{Corollary}
\newtheorem{proposition}[equation]{Proposition}
\newtheorem{question}{Question}
\newtheorem{obs}{Observation}
\theoremstyle{definition}
\newtheorem{definition}[equation]{Definition}
\theoremstyle{remark}
\newtheorem{remark}[equation]{Remark}
\begin{document}
\begin{abstract} Let $\mu,\nu$ be Radon measures on $\R$, with $\mu$ non-atomic and $\nu$ doubling, and write $\mu = \mu_{a} + \mu_{s}$ for the Lebesgue decomposition of $\mu$ relative to $\nu$. For an interval $I \subset \R$, define $\alpha_{\mu,\nu}(I) := \W_{1}(\mu_{I},\nu_{I})$, the Wasserstein distance of normalised blow-ups of $\mu$ and $\nu$ restricted to $I$. Let $\calS_{\nu}$ be the square function
\begin{displaymath} \calS^{2}_{\nu}(\mu) = \sum_{I \in \calD} \alpha_{\mu,\nu}^{2}(I)\chi_{I}, \end{displaymath}
where $\calD$ is the family of dyadic intervals of side-length at most one. I prove that $\calS_{\nu}(\mu)$ is finite $\mu_{a}$ almost everywhere, and infinite $\mu_{s}$ almost everywhere. I also prove a version of the result for a non-dyadic variant of the square function $\calS_{\nu}(\mu)$. The results answer the simplest "$n = d = 1"$ case of a problem of J. Azzam, G. David and T. Toro. 
\end{abstract}

\maketitle

\tableofcontents

\section{Introduction}

\subsection{Wasserstein distance and $\alpha$-numbers} In this paper, $\mu$ and $\nu$ are non-zero Radon measures on $\R$. The measure $\nu$ is generally assumed to be either \emph{dyadically doubling} or \emph{globally doubling}. Dyadically doubling means that
\begin{equation}\label{doub} \nu(\hat{I}) \leq C\nu(I), \quad I \in \calD, \end{equation}
where $\calD$ is the standard family of dyadic intervals, and $\hat{I}$ is the \emph{parent} of $I$, that is, the smallest interval in $\calD$ strictly containing $I$. Globally doubling means that $\nu(B(x,2r)) \leq C\nu(B(x,r))$ for $x \in \R$ and $r > 0$; in particular, this implies $\spt \nu = \R$. The main example for $\nu$ is the Lebesgue measure $\calL$, and the proofs in this particular case would differ little from the ones presented below. No \emph{a priori} homogeneity is assumed of $\mu$.

\begin{definition}[Wasserstein distance] I will use the following definition of the (first) Wasserstein distance: given two Radon measures measures $\nu_{1},\nu_{2}$ on $[0,1]$, set
\begin{displaymath} \mathbb{W}_{1}(\nu_{1},\nu_{2}) := \sup_{\psi} \left| \int \psi \, d\nu_{1} - \int \psi \, d\nu_{2} \right|, \end{displaymath}
where the $\sup$ is taken over all $1$-Lipschitz functions $\psi \colon \R \to \R$, which are supported on $[0,1]$. Such functions will be called \emph{test functions}. A slightly different -- and also quite common -- definition would allow the $\sup$ to run over all $1$-Lipschitz functions $\psi \colon [0,1] \to \R$. To illustrate the difference, let $\nu_{1} = \delta_{0}$ and $\nu_{2} = \delta_{1}$. Then $\W_{1}(\nu_{1},\nu_{2}) = 0$, but the alternative definition, say $\tilde{\W}_{1}$, would give $\tilde{\W}_{1}(\nu_{1},\nu_{2}) = 1$. The main reason for using $\W_{1}$ instead of $\tilde{\W}_{1}$ in this paper is to comply with the definitions in \cite{ADT,ADT2}.

\end{definition}

As in the paper \cite{ADT} of J. Azzam, G. David and T. Toro, I make the following definition:
\begin{definition}[$\alpha$-numbers]\label{alphas} Assume that $I \subset \R$ is an interval. Define
\begin{displaymath} \alpha_{\mu,\nu}(I) := \W_{1}(\mu_{I},\nu_{I}), \end{displaymath}
where $\mu_{I}$ and $\nu_{I}$ are normalised blow-ups of $\mu$ and $\nu$ restricted to $I$. More precisely, let $T_{I}$ be the increasing affine mapping taking $\overline{I}$ to $[0,1]$, and define
\begin{displaymath} \mu_{I} := \frac{T_{I\sharp}(\mu|_{I})}{\mu(I)} \quad \text{and} \quad \nu_{I} := \frac{T_{I\sharp}(\nu|_{I})}{\nu(I)}. \end{displaymath}
If $\mu(I) = 0$ (or $\nu(I) = 0$), define $\mu_{I} \equiv 0$ (or $\nu_{I} \equiv 0$).
\end{definition}

The quantity defined above is somewhat awkward to work with, as it lacks (see Example \ref{counterEx}) the following desirable stability property: if $I,J \subset \R$ are intervals of comparable length, and $I \subset J$, then $\alpha_{\mu,\nu}(I) \lesssim \alpha_{\mu,\nu}(J)$. Chiefly for this reason, I also need to consider the following "smooth" $\alpha$-numbers; the definition below is essentially the same as the one given by Azzam, David and Toro in \cite[Section 5]{ADT2}:
\begin{definition}[Smooth $\alpha$-numbers]\label{smoothAlphas} Let $\varphi := \dist(\cdot,\R \setminus (0,1))$. For an interval $I \subset \R$, define $\alpha_{s,\mu,\nu}(I) := \W_{1}(\mu_{\varphi,I},\nu_{\varphi,I})$, where 
\begin{displaymath} \mu_{\varphi,I} := \frac{T_{I\sharp}(\mu|_{I})}{\mu(\varphi_{I})} \quad \text{and} \quad  \nu_{\varphi,I} := \frac{T_{I\sharp}(\nu|_{I})}{\nu(\varphi_{I})}. \end{displaymath}
Here $T_{I}$ is the map from Definition \ref{alphas}, $\varphi_{I} = \varphi \circ T_{I}$, and $\mu(\varphi_{I}) = \int \varphi_{I} \, d\mu$. If $\mu(\varphi_{I}) = 0$ (or $\nu(\varphi_{I}) = 0$), set $\mu_{\varphi,I} \equiv 0$ (or $\nu_{\varphi,I} \equiv 0$). \end{definition}

The only difference between the numbers $\alpha_{\mu,\nu}(I)$ and $\alpha_{s,\mu,\nu}(I)$ is in the normalisation of the measures $\mu_{I},\varphi_{I}$ and $\mu_{\varphi,I},\nu_{\varphi,I}$: if $I$ is closed, the measures $\mu_{I},\nu_{I}$ are probability measures on $[0,1]$, while $\mu_{\varphi,I}([0,1]) = \mu(I)/\mu(\varphi_{I})$. The numbers $\alpha_{s,\mu,\nu}(I)$ enjoy the stability property alluded to above. Moreover, if either $\mu$ or $\nu$ is a doubling, one has $\alpha_{s,\mu,\nu}(I) \lesssim \alpha_{\mu,\nu}(I)$. These facts are contained in Proposition \ref{basicProperties} (or see \cite[Section 5]{ADT2}).

\begin{remark}\label{originalAlphas} The $\alpha$-numbers were first introduced by X. Tolsa in \cite{To2}, where he used them to characterise the uniform rectifiability of Ahlfors-David regular measures in $\R^{d}$. Tolsa's original definition of the $\alpha$-numbers has a different, asymmetric, normalisation compared to either $\alpha_{\mu,\nu}$ or $\alpha_{s,\mu,\nu}$ above, see \cite[p. 394]{To2}.
\end{remark}

\subsection{Main results} Before explaining the results in Azzam, David and Toro's paper \cite{ADT}, and their connection to the current manuscript, I emphasise that \cite{ADT} treats "$n$-dimensional" measures in $\R^{d}$, for any $1 \leq n \leq d$. For the current paper, only the case $n = d = 1$ is relevant. So, to avoid digressing too much, I need to state the results of \cite{ADT} in far smaller generality than they deserve. 

With this proviso in mind, the main results of \cite{ADT} imply the following. if $\mu$ is a doubling measure on $\R$, and the numbers $\alpha_{\mu,\calL}$ satisfy a Carleson condition of the form
\begin{equation}\label{carlesonADT} \int_{B(x,2r)} \int_{0}^{2r} \alpha_{\mu,\calL}(B(y,t)) \, \frac{dt \, d\mu y}{t} \leq C\mu(B(x,r)), \end{equation}
then $\mu$, or at least a large part of $\mu$, is absolutely continuous with respect to $\calL$, with quantitative upper and lower bounds on the density. As the authors of \cite{ADT} point out, the main shortcoming of their result is that condition \eqref{carlesonADT} imposes a hypothesis on the first powers of the numbers $\alpha_{\mu,\calL}$, whereas evidence suggests (see \cite[Remark 1.6.1]{ADT}, the discussion after \cite[Theorem 1.7]{ADT}, and \cite[Example 4.6]{ADT}) that the correct power should be two. More support for this belief comes from the following "converse" result of Tolsa \cite[Lemma 2.2]{To3}: if $\mu$ is a finite Borel measure on $\R$ then 
\begin{equation}\label{finiteSF} \int_{0}^{\infty} \tilde{\alpha}_{\mu,\calL}^{2}(x,r) \frac{dr}{r} < \infty \text{ for } \calL \text{ a.e. } x \in \R. \end{equation}
In particular, if $\mu \ll \calL$, then \eqref{finiteSF} holds for $\mu$ almost every $x \in \R$. I should again mention that this is only the easiest $n = d = 1$ case of Tolsa's result. Here $\tilde{\alpha}_{\mu,\calL}$ is a variant of the $\alpha$-number (in fact the one discussed in Remark \ref{originalAlphas}).

The purpose of this paper is to address the problem of Azzam, David and Toro in the simplest case $n = d = 1$. I show that control for the second powers of the $\alpha_{\mu,\calL}$-numbers does guarantee absolute continuity with respect to Lebesgue measure. In fact, the doubling assumption on $\mu$ can be dropped, the Carleson condition \eqref{carlesonADT} can be relaxed considerably, and the results remain valid, if $\calL$ is replaced by any doubling measure $\nu$. The results below also contain the "converse" statement, analogous to \eqref{finiteSF}. 

I prove two variants of the main result: one dyadic, and the other non-dyadic. Here is the dyadic version:

\begin{thm}\label{main} Let $\calD$ be the family of dyadic subintervals of $[0,1)$, and let $\mu,\nu$ be Borel probability measures on $[0,1)$. Assume that $\mu$ does not charge the boundaries of intervals in $\calD$, and $\nu$ is dyadically doubling. Write $\mu = \mu_{a} + \mu_{s}$ for the Lebesgue decomposition of $\mu$ relative to $\nu$, where $\mu_{a} \ll \nu$ and $\mu_{s} \perp \nu$. Finally, let $\calS_{\calD,\nu}(\mu)$ be the square function
\begin{displaymath} \calS^{2}_{\calD,\nu}(\mu) = \sum_{I \in \calD} \alpha_{\mu,\nu}^{2}(I)\chi_{I}. \end{displaymath}
Then:
\begin{itemize}
\item[(a)] $\calS_{\nu}(\mu)$ is finite $\mu_{a}$ almost surely, and
\item[(b)] $\calS_{\nu}(\mu)$ is infinite $\mu_{s}$ almost surely.
\end{itemize}
\end{thm}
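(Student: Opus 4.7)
The proof centers on the dyadic ``density'' $h_n(x) := \mu(I_n(x))/\nu(I_n(x))$, where $I_n(x) \in \calD$ is the interval of generation $n$ containing $x$. Dyadic doubling of $\nu$ underwrites the standard dyadic differentiation theorem: $h_n(x) \to (d\mu_a/d\nu)(x) < \infty$ at $\nu$-a.e.\ (hence $\mu_a$-a.e.) $x$, while $h_n(x) \to \infty$ at $\mu_s$-a.e.\ $x$. In addition, dyadic doubling yields the pointwise bound $h_n \leq C_0 h_{n-1}$, since the ratio $h_n/h_{n-1} = [\mu(I_n(x))/\mu(I_{n-1}(x))]\cdot[\nu(I_{n-1}(x))/\nu(I_n(x))]$ consists of one factor in $[0,1]$ and one factor in $[1,C_0]$.

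For \textbf{part (b)}, the strategy is to argue that $h_n(x)\to\infty$ forces $\alpha_{\mu,\nu}(I) \gtrsim 1$ at infinitely many dyadic scales $I \ni x$. The intuition is that when $h_n(x)$ is large, $\mu|_{I_n(x)}$ is very concentrated compared to $\nu|_{I_n(x)}$ (which $\nu$-doubling keeps spread out), so some sub-interval $J \subset I_n(x)$ carries a disproportionate $\mu$-share. Passing to the right dyadic scale $I\supseteq J$ of size comparable to that of $J$ --- chosen so that the discrepancy straddles the dyadic sub-interval cleanly, rather than concentrating pathologically near a dyadic boundary of $I$ --- one exhibits a 1-Lipschitz tent supported in the corresponding half of $[0,1]$ that certifies $\alpha_{\mu,\nu}(I) \gtrsim 1$. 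Using the non-atomicity of $\mu$ on dyadic points, a telescoping descent into finer sub-scales always produces such a ``clean'' scale. Infinitely many such $I$ through $x$ force $\calS_\nu^2(\mu)(x) = \sum_I \alpha_{\mu,\nu}^2(I)\chi_I(x) = \infty$.

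For \textbf{part (a)}, the central task is a square-function upper bound expressing $\alpha_{\mu,\nu}(I)^2$ as a weighted $L^2$-aggregate of Haar-type increments of $(h_n)$ inside $I$, with geometric weights that decay at fine sub-scales of $I$. The natural route is to dominate $\W_1(\mu_I,\nu_I)$ by the $L^1$-norm $\int_0^1 |F_{\mu_I}(t) - F_{\nu_I}(t)|\,dt$ of the CDF difference, expand this difference via dyadic sub-partitions of $I$ to obtain a telescoping representation in terms of the quantities $\mu(J)/\mu(I) - \nu(J)/\nu(I)$ for $J\subseteq I$, and apply Cauchy-Schwarz against the geometric weights so as to exploit orthogonality. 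Decomposing $\mu_a$ into pieces $E_k := \{2^k \leq d\mu_a/d\nu < 2^{k+1}\}$ and working on each $E_k$ separately reduces to bounded density, where $(h_n)$ lies in $L^2(\nu)$ and Haar orthogonality yields a Carleson-type estimate $\sum_I \alpha_{\mu,\nu}^2(I)\nu(I) < \infty$, localized around $E_k$. This gives $\calS_\nu^2(\mu) \in L^1(\nu)$ near $E_k$, hence finite $\nu$-a.e., hence $\mu_a$-a.e., on $E_k$; summing over $k$ yields part (a).

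The principal difficulty lies in the square-function upper bound of (a): $\alpha_{\mu,\nu}(I)$ has access to arbitrarily fine sub-scales of $I$, so the estimate must balance the $L^2$-orthogonality of Haar-type differences against the geometric attenuation of 1-Lipschitz test functions at fine sub-scales. A related subtlety entering both directions is the asymmetric nature of the Wasserstein distance used here (test functions must vanish at the boundary of $[0,1]$), which means a single child-split discrepancy $\mu(I_0)/\mu(I) - \nu(I_0)/\nu(I)$ does not automatically convert into a constant-size lower bound on $\alpha_{\mu,\nu}(I)$ if the excess mass sits close to the dyadic midpoint; this is what necessitates the scale-selection and telescoping in (b). Finally, the failure of parent/child comparability for $\alpha_{\mu,\nu}$, promised to be exhibited later in the paper, forces one to treat $\calS_\nu^2(\mu)$ as a single Carleson object rather than estimating individual $\alpha$'s in isolation; the smooth $\alpha$-numbers of Definition \ref{smoothAlphas}, being stable under change of scale, are a natural intermediary for this step.
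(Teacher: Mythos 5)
Your part (b) strategy rests on the claim that $h_{n}(x) \to \infty$ forces $\alpha_{\mu,\nu}(I) \gtrsim 1$ at infinitely many dyadic scales $I \ni x$. This is false, and the counterexample is elementary: build a singular measure via a dyadic martingale whose child-proportions deviate from those of $\nu$ by amounts $\epsilon_{n} \to 0$ with $\sum \epsilon_{n}^{2} = \infty$. Singularity of $\mu$ relative to $\nu$ only forces $\sum_{n} \Delta_{\mu,\nu}^{2}(I_{n}(x)) = \infty$ $\mu_{s}$-a.e.\ (the dichotomy for $\nu$-adapted dyadic martingales), not $\limsup_{n} \Delta_{\mu,\nu}(I_{n}(x)) > 0$; the individual terms may well tend to zero. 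Worse, $\alpha_{\mu,\nu}(I)$ can be \emph{quadratically} smaller than $\Delta_{\mu,\nu}(I)$ even for nice doubling measures (see Example \ref{exampleOne}, where $\Delta_{\mu,\calL}([0,1)) \sim 2^{-n}$ but $\alpha_{\mu,\calL}([0,1)) \sim 2^{-2n}$), and the paper explicitly observes that there is no efficient pointwise bound $\Delta_{\mu,\nu}(I) \lesssim \alpha_{\mu,\nu}(I)$, nor even $\Delta_{\mu,\nu}(I) \lesssim \alpha_{\mu,\nu}(3I)$. Your ``telescoping descent to a clean scale'' is precisely an attempt at such a pointwise reduction, and it does not exist; the paper's entire Sections \ref{deltaAlphaComparison}--\ref{mainProof} are devoted to circumventing it. The actual argument proves a \emph{Carleson-sum} comparison $\sum_{I \in \calT} \Delta_{\mu,\nu}^{2}(I)\mu(I) \lesssim \sum_{I \in \calT} \alpha_{\mu,\nu}^{2}(I)\mu(I) + \mu(\Top(\calT))$ over stopping-time trees $\calT$ on which $\mu$ is tree-doubling (the Tail--Tip inequality \eqref{CalF+} plus Cauchy--Schwarz and absorption via a small parameter $\kappa$), and then applies a Fefferman--Kenig--Pipher style product representation of $d\nu_{\calT}/d\mu$ in terms of $\mu$-adapted Haar functions (Section \ref{treeAbsCont}) to conclude $\mu|_{\partial\calT} \ll \nu$ on each tree. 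None of this machinery is visible in your sketch.

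Part (a) is closer to the paper: dominating $\W_{1}$ by a CDF-difference, expanding in dyadic martingale increments, and using Cauchy--Schwarz with Haar orthogonality is essentially the content of Lemma \ref{lemma1}. However, your stratification of $\mu_{a}$ into density level sets $E_{k}$ does not work as stated, because $\calS_{\calD,\nu}(\mu)$ is built from $\mu$, not from $\mu|_{E_{k}}$, and the $\alpha$-numbers of the truncated measure are not comparable to those of $\mu$ without controlling the discarded mass. The paper instead runs a Calder\'on--Zygmund decomposition $\mu = g + b$ at a level $\lambda$, with $g$ bounded and the bad part $b = \sum b_{J}$ a sum of mean-zero pieces; the crucial point, which your sketch omits, is that the mean-zero property of $b_{J}$ lets one estimate the contribution of $b$ to the $\alpha$-numbers by $\sum_{J} \ell(J)\mu(J)/(\ell(I)\mu(I))$, which sums to a finite quantity a.e. Without an analogous device, passing from bounded density to general $\mu_{a}$ is left unjustified.
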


In particular, 
\begin{displaymath} \sum_{I \in \calD} \alpha_{\mu,\nu}^{2}(I)\mu(I) < \infty \quad \Longrightarrow \quad \mu \ll \nu. \end{displaymath}
Heuristically, this corresponds to assuming \eqref{carlesonADT} at the scale $r = 1$, but I could not found a way to \textbf{reduce} the continuous problem to the dyadic one; on the other hand, a reduction in the other direction does not appear straightforward, either, so perhaps one needs to treat the cases separately. A caveat of the dyadic set-up is the "non-atomicity" hypothesis on $\mu$. It cannot be dispensed with: for instance, if $\mu = \delta_{x}$ for any $x \in [0,1)$, which only belongs to the interiors of finitely many dyadic intervals, then $\calS_{\calD,\calL}(\mu)$ is uniformly bounded (for instance $\calS_{\calD,\calL}(\delta_{0}) \equiv 0$), but $\mu \perp \calL$. 

Here is the non-dyadic version of the main theorem:
\begin{thm}\label{mainCont} Assume that $\mu,\nu$ are Radon measures, and $\nu$ is globally doubling. Write $\mu = \mu_{a} + \mu_{s}$, as in Theorem \ref{main}. Let $\calS_{\nu}$ be the square function
\begin{displaymath} \calS^{2}_{\nu}(\mu)(x) = \int_{0}^{1} \alpha_{s,\mu,\nu}^{2}(B(x,r)) \, \frac{dr}{r}, \qquad x \in \R, \end{displaymath}
defined via the smooth $\alpha$-numbers $\alpha_{s,\mu,\nu}$. Then,
\begin{itemize}
\item[(a)] $\calS_{\nu}(\mu)$ is finite $\mu_{a}$ almost surely, and
\item[(b)] $\calS_{\nu}(\mu)$ is infinite $\mu_{s}$ almost surely.
\end{itemize}
\end{thm}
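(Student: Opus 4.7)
The plan is to bootstrap from the dyadic Theorem \ref{main} using three tools: (i) a finite family of shifted dyadic grids $\calD^{0},\calD^{1},\calD^{2}$ on $\R$ such that every ball $B(x,r)$ with $r \leq 1$ is contained in some $I \in \calD^{j}$ with $x \in I$ and $|I| \leq Cr$; (ii) the stability property of the smooth $\alpha$-numbers from Proposition \ref{basicProperties}, namely $\alpha_{s,\mu,\nu}(I) \lesssim \alpha_{s,\mu,\nu}(J)$ whenever $I \subset J$ have comparable lengths; and (iii) the domination $\alpha_{s,\mu,\nu}(I) \lesssim \alpha_{\mu,\nu}(I)$ valid when $\nu$ is doubling, again from Proposition \ref{basicProperties}.

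For part (a), fix $x \in \R$. For each $r \in (0,1]$, let $k \geq 0$ satisfy $r \in [2^{-k-1},2^{-k}]$, and pick $j = j(x,r) \in \{0,1,2\}$ and $I_{k}^{j}(x) \in \calD^{j}$ with $B(x,r) \subset I_{k}^{j}(x)$ and $|I_{k}^{j}(x)| \sim 2^{-k}$. Applying (ii) and (iii) yields $\alpha_{s,\mu,\nu}(B(x,r)) \lesssim \alpha_{\mu,\nu}(I_{k}^{j}(x))$; integrating in $r$ and summing over $j$,
\begin{displaymath}
\int_{0}^{1} \alpha_{s,\mu,\nu}^{2}(B(x,r)) \, \tfrac{dr}{r} \lesssim \sum_{j=0}^{2} \sum_{I \in \calD^{j},\, I \ni x} \alpha_{\mu,\nu}^{2}(I).
\end{displaymath}
Extending Theorem \ref{main} to three shifted grids on $\R$ and to general Radon measures is a routine adaptation (atoms of $\mu$ lie in countably many points, so the shifts can be chosen generically to avoid any boundary charge). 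Applying it to each grid shows that the right-hand side is finite $\mu_{a}$-a.e.

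For part (b), I would establish a reverse inequality: if $I \in \calD$ has length $2^{-k}$ and contains $x$, then $I \subset B(x,2^{-k})$, so (ii) gives $\alpha_{s,\mu,\nu}(I) \lesssim \alpha_{s,\mu,\nu}(B(x,2^{-k}))$; and for $r \in [2^{-k},2^{-k+1}]$ the inclusion $B(x,2^{-k}) \subset B(x,r)$ together with (ii) yields $\alpha_{s,\mu,\nu}(B(x,2^{-k})) \lesssim \alpha_{s,\mu,\nu}(B(x,r))$. Integrating in $r$ over a dyadic annulus produces a factor of $\log 2$, and summing over $k$ gives
\begin{displaymath}
\sum_{I \in \calD,\, I \ni x} \alpha_{s,\mu,\nu}^{2}(I) \lesssim \int_{0}^{1} \alpha_{s,\mu,\nu}^{2}(B(x,r)) \, \tfrac{dr}{r}.
\end{displaymath}
It then suffices to prove the smooth-dyadic analogue of Theorem \ref{main}(b), namely that $\sum_{I \in \calD} \alpha_{s,\mu,\nu}^{2}(I) \chi_{I}$ is infinite $\mu_{s}$-a.e.

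The main obstacle is this last smooth-dyadic statement; it cannot simply be read off from Theorem \ref{main}(b), because the domination $\alpha_{s,\mu,\nu} \leq C\alpha_{\mu,\nu}$ runs the wrong way for a lower bound. I expect to handle it by replaying the proof of Theorem \ref{main}(b): at a typical $\mu_{s}$-point $x$ the density ratio $\mu(I_{k}(x))/\nu(I_{k}(x))$ along the dyadic chain $I_{k}(x) \ni x$ blows up by a Lebesgue-type differentiation argument, and the bump-function normalisation $\mu(\varphi_{I})$ is comparable to $\mu(I')$ for a modestly dilated $I' \supset I$, so the same density blow-up forces $\alpha_{s,\mu,\nu}(I_{k}(x))$ to be non-summable along the chain, via a test function detecting the concentration of $\mu$ inside $I_{k}(x)$.
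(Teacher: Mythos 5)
Your part (a) is essentially the paper's argument: the paper proves the dyadic statement for arbitrary dyadic systems (Lemma \ref{lemma2}), picks adjacent shifted grids $\calD_{1},\ldots,\calD_{N}$, and dominates the continuous square function by the finite sum of dyadic ones exactly as you do. That half is fine.

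Part (b), however, has a genuine gap, and the heuristic you propose to fill it does not work. You correctly reduce to a smooth-dyadic statement and correctly identify that the one-sided domination $\alpha_{s,\mu,\nu}\lesssim\alpha_{\mu,\nu}$ runs the wrong way. But the suggestion --- that at a typical $\mu_{s}$-point a Lebesgue-type blow-up of the density ratio $\mu(I_{k}(x))/\nu(I_{k}(x))$ forces $\alpha_{s,\mu,\nu}(I_{k}(x))$ to be non-summable ``via a test function detecting the concentration of $\mu$'' --- misreads how Theorem \ref{main}(b) is actually proved, and would fail. The proof of Theorem \ref{main}(b) is never a pointwise lower bound on $\alpha$-numbers at singular points; indeed, Example \ref{exampleOne} shows that the $\alpha$-number of an interval can be far smaller than the $\Delta$-number (a sharp jump in density near the midpoint is invisible to $1$-Lipschitz test functions vanishing at the endpoints), so density blow-up alone does not force $\alpha$ or $\alpha_{s}$ to be non-summable along a dyadic chain. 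The actual mechanism is by contrapositive: stop a tree when $\sum\alpha^{2}\geq\epsilon^{2}$, observe that $\mu$ is tree-doubling, bound the Carleson sum of $\Delta^{2}_{\mu,\nu}$ over the tree by that of $\alpha^{2}$ via the Tail--Tip inequality \eqref{CalF+}, and then invoke Proposition \ref{mainProp}, which is a Fefferman--Kenig--Pipher product-representation / martingale argument showing $\mu|_{\partial\calT}\ll\nu$.

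Carrying this through in the smooth setting is precisely the substance of Section \ref{continuousVariants}, and it is not a routine replay. One needs: (i) a careful discretisation associating to each dyadic $I$ a ball $B_{I}\supset I$, with $\alpha(B_{I})$ controlled by an average over nearby radii and with the monotonicity $I\subset J\Rightarrow B_{I}\subset B_{J}$; (ii) a new Tail--Tip inequality, \eqref{realTipTail}, for the quantities $\tilde{\Delta}_{B_{I}}$ in terms of $\alpha(B_{P})$ --- this does not follow from \eqref{CalF+} because the smooth normalisations $\mu(\varphi_{B_{I}})$ do not telescope the way $\mu(I)$ does, and the change of normalising ball from $B_{I}$ to $B_{J_{+}}$ at each step costs another $\alpha(B_{I})$ term that must be tracked; (iii) the verification that the stopping condition $\alpha(B_{I})<\epsilon$ yields tree-doubling of $\mu$ (a variant of Lemma \ref{decayLemma}). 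Only then does \eqref{form29} hold and Proposition \ref{mainProp} apply. None of this is in your sketch, so part (b) remains unproved as written.
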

Recall that $\alpha_{s,\mu,\nu}(B(x,r)) \lesssim \alpha_{\mu,\nu}(B(x,r))$ whenever $\nu$ is doubling, such as $\nu = \calL$, see Proposition \ref{basicProperties}. So, Theorem \ref{mainCont} has the following corollary:
\begin{cor} If $\mu$ is a Radon measure on $\R$ such that
\begin{equation}\label{form38} \int_{0}^{1} \alpha^{2}_{\mu,\calL}(B(x,t)) \, \frac{dt}{t} < \infty \end{equation}
for $\mu$ almost every $x \in \R$, then $\mu \ll \nu$. \end{cor}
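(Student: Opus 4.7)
The plan is to deduce the corollary directly from Theorem~\ref{mainCont} by taking $\nu = \calL$, the Lebesgue measure on $\R$, which is globally doubling. The only gap to bridge is that the hypothesis \eqref{form38} is stated in terms of the non-smooth $\alpha$-numbers $\alpha_{\mu,\calL}$, whereas Theorem~\ref{mainCont} involves the smooth variant $\alpha_{s,\mu,\calL}$.

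This gap is closed by Proposition~\ref{basicProperties}: since $\calL$ is doubling, one has
$$\alpha_{s,\mu,\calL}(B(x,t)) \lesssim \alpha_{\mu,\calL}(B(x,t)) \qquad \text{for all } x \in \R \text{ and } t > 0.$$
Squaring, integrating against $dt/t$ over $(0,1)$, and using the hypothesis \eqref{form38}, I obtain
$$\calS_{\calL}^{2}(\mu)(x) = \int_{0}^{1} \alpha_{s,\mu,\calL}^{2}(B(x,t)) \, \frac{dt}{t} < \infty$$
for $\mu$-almost every $x \in \R$, and hence in particular for $\mu_{s}$-almost every $x$, where $\mu = \mu_{a} + \mu_{s}$ is the Lebesgue decomposition of $\mu$ relative to $\calL$.

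Now I invoke Theorem~\ref{mainCont}(b): if $\mu_{s}$ were nonzero, then $\calS_{\calL}(\mu) = \infty$ on a set of positive $\mu_{s}$-measure, in direct contradiction with the finiteness just obtained. Therefore $\mu_{s} \equiv 0$, i.e.\ $\mu = \mu_{a} \ll \calL$, as claimed. There is essentially no obstacle in this argument: the corollary is a two-line consequence of Theorem~\ref{mainCont} together with the comparability of the two flavours of $\alpha$-numbers from Proposition~\ref{basicProperties}, and all the real work is absorbed into those two ingredients.
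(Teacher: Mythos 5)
Your proof is correct and is exactly the argument the paper intends: the text immediately preceding the corollary already points out that $\alpha_{s,\mu,\calL}(B(x,r)) \lesssim \alpha_{\mu,\calL}(B(x,r))$ by Proposition~\ref{basicProperties} (since $\calL$ is doubling), and then the corollary is declared to follow from Theorem~\ref{mainCont}. Your write-up simply spells out that deduction.
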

The following question remains open:
\begin{question}\label{Q1} In the setting of Theorem \ref{mainCont}, is the square function in \eqref{form38} (with $\calL$ replaced by $\nu$) finite $\mu_{a}$ almost everywhere?
\end{question}
The difficulties arise from the non-stability of the numbers $\alpha_{\mu,\nu}$. See \cite[Section 5]{ADT2}, and in particular \cite[Lemma 5.3]{ADT2}, for related discussion.

Assuming the full Carleson condition \eqref{carlesonADT}, and that $\mu$ is globally doubling, the authors of \cite{ADT} prove something more quantitative than $\mu \ll \calL$; see in particular \cite[Theorem 1.9]{ADT}. The same ought to be true for the second powers of the $\alpha$-numbers, and indeed the following result can be easily deduced with the method of the current paper:
\begin{thm}\label{mainCarleson} Assume that $\mu,\nu$ are Borel probability measures on $[0,1)$, both dyadically doubling, and assume that the Carleson condition
\begin{equation}\label{carleson} \sum_{I \subset J} \alpha_{\mu,\nu}^{2}(I)\mu(I) \leq C\mu(J), \qquad J \in \calD, \end{equation}
holds for some $C \geq 1$. Then $\mu$ belongs to $A_{\infty}^{\calD}(\nu)$, the dyadic $A_{\infty}$ class relative to $\nu$. Similarly, if $\mu,\nu$ are Radon measures on $\R$, both globally doubling, and the Carleson condition \eqref{carlesonADT} holds for the second powers $\alpha^{2}_{\mu,\nu}(B(y,t))$, then $\mu \in A_{\infty}(\nu)$.
\end{thm}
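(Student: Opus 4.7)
The plan is to follow the same approach as the proof of Theorem \ref{main}(a), and promote its pointwise almost-everywhere conclusion to a uniform, quantitative one by exploiting the Carleson hypothesis \eqref{carleson}. First, applying \eqref{carleson} with $J = [0,1)$ gives $\sum_{I \in \calD} \alpha_{\mu,\nu}^{2}(I)\mu(I) < \infty$, so $\calS_{\calD,\nu}(\mu) \in L^{2}(\mu)$, and Theorem \ref{main}(a) already implies $\mu \ll \nu$. Write $h := d\mu/d\nu$. Fixing $J_{0} \in \calD$ and setting $\rho_{n}(x) := \mu(I_{n}(x))/\nu(I_{n}(x))$, where $I_{n}(x) \subset J_{0}$ is the dyadic descendant of $J_{0}$ of generation $n$ containing $x$, the sequence $(\rho_{n})$ is the $\nu$-martingale associated with the conditional expectation of $h$ on $J_{0}$, and $\rho_{n} \to h$ both $\nu$- and $\mu$-a.e. on $J_{0}$.

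The heart of the argument, implicit in the proof of Theorem \ref{main}(a), is the pointwise jump estimate
\begin{displaymath}
\bigl|\log \rho_{n}(x) - \log \rho_{n-1}(x)\bigr| \lesssim \alpha_{\mu,\nu}(I_{n-1}(x)),
\end{displaymath}
valid under dyadic doubling of both $\mu$ and $\nu$. Squaring, summing in $n$, and integrating against $\mu$ over $J_{0}$, the Carleson condition \eqref{carleson} translates into a Carleson-measure bound on the quadratic variation of the additive process $n \mapsto \log \rho_{n}(x) - \log \rho(J_{0})$, namely
\begin{displaymath}
\int_{J_{0}} \sum_{n \geq 0} \bigl|\log \rho_{n+1}(x) - \log \rho_{n}(x)\bigr|^{2} \, d\mu(x) \leq C' \mu(J_{0}).
\end{displaymath}

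The final step is to invoke a Chang--Wilson--Wolff / John--Nirenberg exponential-integrability principle for dyadic processes with Carleson-controlled quadratic variation, producing constants $c, C > 0$ independent of $J_{0}$ for which
\begin{displaymath}
\mu\bigl(\{x \in J_{0} : |\log h(x) - \log \rho(J_{0})| > \lambda\}\bigr) \leq C \mu(J_{0}) e^{-c\lambda}, \qquad \lambda > 0.
\end{displaymath}
Such uniform sub-exponential tails are a standard formulation of the dyadic $A_{\infty}^{\calD}(\nu)$ property. The non-dyadic version of Theorem \ref{mainCarleson} proceeds analogously: replace the dyadic filtration with averages over balls, $\alpha_{\mu,\nu}$ with the smooth numbers $\alpha_{s,\mu,\nu}$ (whose stability from Proposition \ref{basicProperties} is essential for the jump estimate), and use Theorem \ref{mainCont} as the starting absolute-continuity input.

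The principal obstacle is the pointwise jump estimate. This is the step that uses the dyadic doubling of $\mu$ (which does not appear in Theorem \ref{main}), and it rests on a careful choice of $1$-Lipschitz test function in the definition of $\W_{1}$ that simultaneously discriminates between the two children of $I_{n-1}(x)$ and vanishes at the endpoints of that parent interval; combined with doubling, this converts the Wasserstein distance $\alpha_{\mu,\nu}(I_{n-1})$ into a bound on the multiplicative increment $\rho_{n}/\rho_{n-1}$.
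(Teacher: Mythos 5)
Your proposal rests on the pointwise jump estimate
\begin{displaymath}
\bigl|\log \rho_{n}(x) - \log \rho_{n-1}(x)\bigr| \lesssim \alpha_{\mu,\nu}(I_{n-1}(x)),
\end{displaymath}
which, under the standing doubling hypotheses (so that $\mu(I_{\pm})/\mu(I)$ and $\nu(I_{\pm})/\nu(I)$ are bounded away from $0$ and $1$), is equivalent to the pointwise inequality $\Delta_{\mu,\nu}(I) \lesssim \alpha_{\mu,\nu}(I)$. This inequality is \emph{false}, and the paper goes out of its way to say so: Example \ref{exampleOne} exhibits a dyadically $4$-doubling probability measure $\mu$ on $[0,1]$ with $\Delta_{\mu,\calL}([0,1)) \sim 2^{-n}$ but $\alpha_{\mu,\calL}([0,1)) \sim 2^{-2n}$. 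The reason, as explained right before that example, is that the test functions in the definition of $\W_{1}$ must vanish at $\partial[0,1]$, so $\chi_{(0,1/2)}$ (or any uniformly Lipschitz surrogate for it) is not admissible, and there is no single clever choice of $\psi$ that detects the split $\mu(I_{-})/\mu(I)$ at the cost of only $O(\alpha_{\mu,\nu}(I))$. You gesture at exactly this difficulty in your last paragraph, but the resolution you suggest (a careful test function plus doubling) does not exist. Nor is the jump estimate ``implicit in the proof of Theorem \ref{main}(a)'': that proof is a Calder\'on--Zygmund argument bounding the square function directly, and never compares $\Delta$- and $\alpha$-numbers pointwise.

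What actually works, and what the paper proves in Section \ref{deltaAlphaComparison} (Proposition \ref{deltaVsAlpha}), is a comparison of $\Delta$ and $\alpha$ \emph{only at the level of Carleson sums over trees}: one writes $\chi_{(0,1/2)}$ as a sum of Lipschitz bumps supported on nested dyadic intervals, iterates the resulting recursive estimate \eqref{repInd}, and after a Cauchy--Schwarz absorption arrives at
\begin{displaymath}
\sum_{I \subset J} \Delta_{\mu,\nu}^{2}(I)\mu(I) \lesssim \sum_{I \subset J} \alpha_{\mu,\nu}^{2}(I)\mu(I) + \mu(J).
\end{displaymath}
Once this is in hand, the dyadic part of Theorem \ref{mainCarleson} is immediate from Buckley's characterisation (Theorem \ref{Bu}): the Carleson condition on the $\Delta$-numbers is exactly equivalent to $\mu \in A_{\infty}^{\calD}(\nu)$. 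So the missing step in your argument is not a refinement --- it is essentially all of Section \ref{deltaAlphaComparison}. If you replace the false pointwise jump estimate with Proposition \ref{deltaVsAlpha}, then your quadratic-variation and Chang--Wilson--Wolff outline becomes a re-derivation of Buckley's theorem rather than an alternative to it; the paper simply cites Buckley instead. The continuous part has the same issue, and is handled in Remark \ref{AinftyRemark} by first passing to a finite family of shifted dyadic grids.
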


The \emph{a priori} doubling assumptions cannot be omitted (that is, they are not implied by the Carleson condition): just consider $\mu = 2\chi_{[0,1/2)} \, d\calL$. It is clear that the Carleson condition \eqref{carleson} holds for the numbers $\alpha^{2}_{\mu,\calL}(I)$, but nevertheless $\mu \notin A_{\infty}^{\calD}(\calL|_{[0,1]})$.

\subsection{Outline of the paper, and the main steps of the proofs} The main substance of the paper is proving the dyadic result, Theorem \ref{main}, and in particular part (b). This work takes up Sections \ref{deltaAlphaComparison}-\ref{mainProof}. The proof of part (a) is simpler, and closely follows a previous argument of Tolsa -- namely the one used to prove \eqref{finiteSF}. The details (both in the dyadic and continuous settings) are given in Section \ref{muASection}. Modifications required to prove part (b) of the "continuous" Theorem \ref{mainCont} are outlined in Section \ref{continuousVariants}.

 The proof of Theorem \ref{main}(b) has three main steps. First, the numbers $\alpha_{\mu,\nu}(I)$ are used to control something analyst-friendlier, namely the following dyadic variants:
\begin{equation}\label{deltaNumbers} \Delta_{\mu,\nu}(I) = \left|\frac{\mu(I_{-})}{\mu(I)} - \frac{\nu(I_{-})}{\nu(I)} \right|. \end{equation}
Here $I_{-}$ stands for the left half of $I$. This would be simple, if $\chi_{[0,1/2)}$ happened to be one of the admissible test functions $\psi$ in the definition of $\W_{1}$. It is not, however, and in fact there seems to be no direct (and sufficiently efficient) way to control $\Delta_{\mu,\nu}(I)$ by $\alpha_{\mu,\nu}(I)$, or even $\alpha_{\mu,\nu}(3I)$. However, it turns out that the numbers are equivalent at the level of certain Carleson sums over trees; proving this statement is the main content of Section \ref{deltaAlphaComparison}.

The numbers $\Delta_{\mu,\nu}(I)$ are well-known quantities: they are the (absolute values of the) coefficients in an orthogonal representation of $\mu$ in terms of $\nu$-adapted Haar functions, and it is known that they can be used to characterise $A_{\infty}$. The following theorem is due to S. Buckley \cite{Bu} from 1993:
\begin{thm}[Theorem 2.2(iii) in \cite{Bu}]\label{Bu} Let $\mu,\nu$ be a dyadically doubling Borel probability measures on $[0,1]$. Then $\mu \in A_{\infty}^{\calD}(\nu)$, if and only if
\begin{equation}\label{carlesonBu} \sum_{I \subset J} \Delta^{2}_{\mu,\nu}(I)\mu(I) \leq C\mu(J), \qquad J \in \calD. \end{equation}
\end{thm}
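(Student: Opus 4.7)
The plan is to view the Carleson condition \eqref{carlesonBu} as a disguised dyadic BMO condition on the logarithmic martingale $\phi_{I}:=\log(\mu(I)/\nu(I))$, and then deduce the Buckley characterisation from the classical identification $\mu\in A_{\infty}^{\calD}(\nu)\Leftrightarrow\log(d\mu/d\nu)\in\mathrm{BMO}^{\calD}$. The link between $\Delta_{\mu,\nu}(I)$ and the increments of $\phi_{I}$ is the elementary identity
\begin{equation*}
\frac{\mu(I_{-})}{\nu(I_{-})}-\frac{\mu(I)}{\nu(I)}\;=\;\Delta_{\mu,\nu}(I)\cdot\frac{\nu(I)}{\nu(I_{-})}\cdot\frac{\mu(I)}{\nu(I)}.
\end{equation*}
Dyadic doubling of $\nu$ makes the middle factor bounded, and dyadic doubling of $\mu$ confines the ratios $\mu(I_{\pm})/\mu(I)$ to a compact subinterval of $(0,1)$; together these yield $\Delta_{\mu,\nu}(I)\asymp|\phi_{I_{-}}-\phi_{I}|$, with constants depending only on the two doubling constants. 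Since $\mu(I)=e^{\phi_{I}}\nu(I)$, \eqref{carlesonBu} is then equivalent to the $\mu$-weighted dyadic BMO-square-function bound $\sum_{I\subset J}|\phi_{I_{-}}-\phi_{I}|^{2}\mu(I)\lesssim\mu(J)$.

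For the direction $\mu\in A_{\infty}^{\calD}(\nu)\Rightarrow$\eqref{carlesonBu}, I would first apply the classical Coifman--Fefferman principle to obtain $\phi=\log w\in\mathrm{BMO}^{\calD}(\nu)$ for $w=d\mu/d\nu$, then use the martingale square-function characterisation of dyadic BMO to get $\sum_{I\subset J}|\Delta_{I}\phi|^{2}\nu(I)\lesssim\nu(J)$. The passage from the $\nu$-weight to the $\mu$-weight in the Carleson sum proceeds by a stopping-time decomposition at levels $\{2^{k}w_{J}\}_{k\geq 0}$ for the density $w$, whose contributions telescope into a geometric series thanks to the reverse H\"older property of $A_{\infty}$ weights; this converts the $\nu$-weighted estimate into \eqref{carlesonBu}.

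For the converse, the Carleson hypothesis \eqref{carlesonBu} reads, after the comparison above, as $\phi\in\mathrm{BMO}^{\calD}(\mu)$. Dyadic John--Nirenberg then promotes this $L^{2}$ bound to exponential integrability
\begin{equation*}
\int_{J} e^{\lambda|\phi-\phi_{J}|}\,d\mu\;\lesssim\;\mu(J)
\end{equation*}
for some small absolute $\lambda>0$. Since $\phi=\log w_{I}$ and dyadic doubling of both $\mu$ and $\nu$ lets one interchange $\mu$- and $\nu$-averages on any fixed dyadic interval, this exponential moment bound is equivalent to a dyadic reverse H\"older inequality for $w$ with respect to $\nu$, hence to $\mu\in A_{\infty}^{\calD}(\nu)$. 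Absolute continuity $\mu\ll\nu$ is forced along the way: finite exponential moments of $\phi$ preclude $w_{I}$ from degenerating to $0$ or $\infty$ on a set of positive $\mu$-measure, so the Lebesgue decomposition of $\mu$ has no singular part.

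The principal obstacle is the last step, extracting exponential integrability from a merely quadratic Carleson bound in this two-measure setting. Both doubling hypotheses enter essentially: they underpin the comparison $\Delta_{\mu,\nu}\asymp|\Delta\phi|$, they keep the John--Nirenberg mechanism applicable (via a uniform $L^{\infty}$ bound on the individual martingale increments $\Delta_{I}\phi$), and they allow the final transfer between $\mu$- and $\nu$-based reverse H\"older. The example $\mu=2\chi_{[0,1/2)}\,d\calL$ flagged by the author confirms that the equivalence itself fails once the doubling of $\mu$ is dropped.
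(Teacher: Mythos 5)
The paper does not actually prove this theorem: it quotes it from Buckley's 1993 paper and merely remarks that Buckley's argument (stated there for $\nu = \calL$) extends to general dyadically doubling $\nu$. So there is no "paper proof" to compare against verbatim; the closest in-house relatives are Proposition \ref{deltaVsAlpha} (comparison of Carleson sums) and Proposition \ref{mainProp} (the absolute-continuity half of the story), and I will use the latter as a benchmark.

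Your opening identity and the comparison $\Delta_{\mu,\nu}(I) \asymp |\phi_{I_{-}} - \phi_{I}|$ with $\phi_{I} := \log(\mu(I)/\nu(I))$, with constants from the two dyadic doubling constants, are correct and are exactly the right starting point. But there is a genuine gap at the heart of the converse direction. The sequence $(\phi_{I})_{I \in \calD}$ is \emph{not} a martingale with respect to $\mu$ (nor with respect to $\nu$). Writing $p_{\pm} = \mu(I_{\pm})/\mu(I)$ and $q_{\pm} = \nu(I_{\pm})/\nu(I)$, the $\mu$-conditional increment is
\begin{displaymath}
p_{-}(\phi_{I_{-}}-\phi_{I}) + p_{+}(\phi_{I_{+}}-\phi_{I}) = p_{-}\log\frac{p_{-}}{q_{-}} + p_{+}\log\frac{p_{+}}{q_{+}} \;=\; D(p\,\|\,q)\;\geq\;0,
\end{displaymath}
so $(\phi_{I})$ is a strict $\mu$-submartingale (and a $\nu$-supermartingale) whenever $\Delta_{\mu,\nu}(I) > 0$. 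Consequently the "martingale square-function characterisation of dyadic BMO" and "dyadic John--Nirenberg" that you invoke do not apply to $\phi_{I}$ as written. The compensator $\sum_{J \supset I} D(p_{J}\|q_{J}) \asymp \sum_{J \supset I}\Delta_{\mu,\nu}^{2}(J)$ is controlled \emph{on average} by the Carleson condition, so the idea is morally salvageable, but turning it into a proof requires either a compensated (Doob--Meyer) decomposition or an adapted, multiplicative argument; you cannot simply substitute $\phi_{I}$ into the classical theorems. A secondary issue: the asserted biconditional "$\mu\in A_{\infty}^{\calD}(\nu)\Leftrightarrow \log(d\mu/d\nu)\in\mathrm{BMO}^{\calD}$" is false as stated ($\mathrm{BMO}$ of the logarithm does not imply $A_{\infty}$; only the small-norm version does), and it also requires the absolute continuity you have not yet established. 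Your actual argument via $\mu$-weighted exponential moments is better than the slogan, and in fact a one-sided $\mu$-weighted exponential estimate $\int_{J} e^{\lambda(\phi-\phi_{J})}d\mu \lesssim \mu(J)$ does translate directly into a reverse H\"older inequality $\frac{1}{\nu(J)}\int_{J} w^{1+\lambda}\,d\nu \lesssim \big(\frac{1}{\nu(J)}\int_{J} w\,d\nu\big)^{1+\lambda}$ — but that estimate still has to be proved for a submartingale, which is exactly the missing step. Note that the paper's own Proposition \ref{mainProp} sidesteps the submartingale issue by working multiplicatively with $\prod_{J}(1+a_{J}h_{J}^{\mu})$ and using only the one-sided bound $\log(1+t)\geq t-Ct^{2}$; this yields absolute continuity but deliberately stops short of $A_{\infty}$, which is precisely why the paper cites Buckley for the full statement.
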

The result in \cite{Bu} is only stated for $\nu = \calL|_{[0,1]}$, but the proof works in the greater generality. Note the similarity between the Carleson conditions \eqref{carlesonBu} and \eqref{carleson}: The dyadic part of Theorem \ref{mainCarleson} is, in fact, nothing but a corollary of Buckley's result, assuming that one knows how to control the numbers $\Delta_{\mu,\nu}(I)$ by the numbers $\alpha_{\mu,\nu}(I)$ at the level of Carleson sums; consequently, the short proof of this half of Theorem \ref{mainCarleson} can be found in Section \ref{deltaAlphaComparison}. The continuous version is discussed briefly in Remark \ref{AinftyRemark}.

Buckley's result is not applicable for Theorem \ref{main}: the measure $\mu$ is not dyadically doubling, and the information available is much weaker than the Carleson condition \eqref{carleson}. Handling these issues constitutes the remaining two steps in the proof: all dyadic intervals are split into trees, where $\mu$ is "tree-doubling" (Section \ref{mainProof}), and the absolute continuity of $\mu$ with respect to $\nu$ is studied in each tree separately (Section \ref{treeAbsCont}). 

\subsection{Acknowledgements} I am grateful to Jonas Azzam, David Bate, and Antti K\"aenmaki and for useful discussions during the preparation of the manuscript. I would also like to thank the referees for good comments, and for asking me to prove parts (a) of Theorems \ref{main} and \ref{mainCont}.

\section{Comparison of $\alpha$-numbers and $\Delta$-numbers}\label{deltaAlphaComparison}

In this section, $\mu$ and $\nu$ are Borel probability measures on $[0,1)$, $\mu$ does not charge the boundaries of dyadic intervals, and $\nu$ is dyadically doubling inside $[0,1)$:
\begin{displaymath} \nu(\hat{I}) \leq D_{\nu}\nu(I), \qquad I \in \calD \setminus \{[0,1)\}. \end{displaymath}
This implies, in particular, that $\nu(I) > 0$ for all $I \in \calD$ with $I \subset [0,1)$. The main task of the section is to bound the numbers $\Delta_{\mu,\nu}(I)$ by the numbers $\alpha_{\mu,\nu}(I)$, where $\Delta_{\mu,\nu}(I)$ was the quantity
\begin{displaymath} \Delta_{\mu,\nu}(I) = \left|\frac{\mu(I_{-})}{\mu(I)} - \frac{\nu(I_{-})}{\nu(I)} \right| = \left| \int \chi_{(0,1/2)} \, d\mu_{I} - \int \chi_{(0,1/2)} \, d\nu_{I} \right|. \end{displaymath}
The task would be trivial, if $\chi_{(0,1/2)}$ were a $1$-Lipschitz function vanishing at the boundary of $[0,1]$. It is not: in fact, the difference between $\Delta_{\nu_{1},\nu_{2}}(I)$ and $\alpha_{\nu_{1},\nu_{2}}(I)$ can be rather large for a given interval $I$.

\begin{ex}\label{exampleOne} If $\nu_{1} = \delta_{1/2 - 1/n}$ and $\nu_{2} = \delta_{1/2 + 1/n}$, then $\Delta_{\nu_{1},\nu_{2}}([0,1)) = 1$, but $\alpha_{\nu_{1},\nu_{2}}([0,1)) \lesssim 1/n$. These measures do not satisfy the assumptions of the section, so consider also the following example. Let $\mu = f \, d\calL$, where $f$ takes the value $1$ everywhere, except in the $2^{-n}$-neighbourhood of $1/2$. Let $f \equiv 1/2$ on the interval $[1/2 - 2^{-n},1/2]$, and $f \equiv 3/2$ on the interval $(1/2,1/2 + 2^{-n}]$. Then $\mu$ is dyadically $4$-doubling probability measure on $[0,1]$, $\Delta_{\mu,\calL}([0,1)) \sim 2^{-n}$, and $\alpha_{\mu,\calL}([0,1)) \sim 2^{-2n}$. \end{ex}

Fortunately, "pointwise" estimates between $\Delta_{\mu,\nu}(I)$ and $\alpha_{\mu,\nu}(I)$ are not really needed in this paper, and it turns out that certain sums of these numbers are comparable, up to a manageable error. To state such results, I need to introduce some terminology. A family $\mathcal{C} \subset \calD$ of dyadic intervals is called \emph{coherent}, if the implication
\begin{displaymath} Q,R \in \mathcal{C} \text{ and } Q \subset P \subset R \quad \Longrightarrow \quad P \in \mathcal{C} \end{displaymath}
holds for all $Q,P,R \in \calD$. 
\begin{definition}[Trees, leaves, boundary]\label{treeDef} A \emph{tree} $\calT \subset \calD$ is any coherent family of dyadic intervals with a unique largest interval, $\Top(\calT) \in \calT$, and with the property that 
\begin{displaymath} \card (\ch(I) \cap \calT) \in \{0,2\}, \qquad I \in \calT. \end{displaymath}
For the tree $\calT$, define the set family $\Leaves(\calT)$ to consist of the minimal intervals in $\calT$, in other words those $I \in \calT$ with $\card(\ch(I) \cap \calT) = 0$. Abusing notation, I often write $\Leaves(\calT)$ also for the set $\cup\{I : I \in \Leaves(\calT)\}$. Finally, define the \emph{boundary} of the tree $\partial \calT$ by
\begin{displaymath} \partial T := \Top(\calT) \setminus \Leaves(\calT). \end{displaymath}
Then $x \in \partial \calT$, if and only if $x \in \Top(\calT)$, and all intervals $I \in \calD$ with $x \in I \subset \Top(\calT)$ are contained $\calT$. \end{definition}

\begin{definition}[$(\calT,D)$-doubling measures]\label{treeDoubling} A Borel probability measure $\mu$ on $[0,1]$ is called $(\calT,D)$-doubling, if 
\begin{displaymath} \mu(\hat{I}) \leq D\mu(I), \qquad I \in \calT \setminus \Top(\calT). \end{displaymath}
\end{definition}

Here is the main result of this section:
\begin{proposition}\label{deltaVsAlpha} Let $\mu,\nu$ be measures satisfying the assumptions of the section, and let $\calT \subset \calD$ be a tree. Moreover, assume that $\mu$ is $(\calT,D)$-doubling for some constant $D \geq 1$. Then
\begin{displaymath} \sum_{I \in \calT} \Delta_{\mu,\nu}^{2}(I)\mu(I) \lesssim_{D_{\nu},D} \sum_{I \in \calT \setminus \Leaves(\calT)} \alpha_{\mu,\nu}^{2}(I)\mu(I) + \mu(\Top(\calT)). \end{displaymath}
\end{proposition}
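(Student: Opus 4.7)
My plan is to establish the Carleson-type comparison via a pointwise estimate of $\Delta_{\mu,\nu}(I)$ by $\alpha_{\mu,\nu}(I)$ modulo correction terms, then to sum over $\calT$ while exploiting the doubling hypotheses to collapse the corrections into the free term $\mu(\Top(\calT))$ on the right hand side.

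For the pointwise step, I would fix a small $\delta \in (0, 1/4)$ (e.g.\ $\delta = 1/8$) and construct an explicit piecewise linear function $\phi_\delta \colon [0, 1] \to [0, 1]$ that vanishes at $\{0, 1\}$, equals $1$ on $[\delta, 1/2 - \delta]$, equals $0$ on $[1/2 + \delta, 1]$, and has Lipschitz constant $1/\delta$. Then $\delta \phi_\delta$ is a valid $1$-Lipschitz test function supported in $[0, 1]$, so by the definition of $\alpha_{\mu,\nu}(I)$,
\[
\left| \int \phi_\delta \, d\mu_I - \int \phi_\delta \, d\nu_I \right| \le \frac{\alpha_{\mu,\nu}(I)}{\delta}.
\]
Since $\chi_{(0, 1/2)} - \phi_\delta$ is supported on $E_\delta := [0, \delta] \cup [1/2 - \delta, 1/2 + \delta]$ and bounded by $1$ there, the triangle inequality produces
\[
\Delta_{\mu,\nu}(I) \le \frac{\alpha_{\mu,\nu}(I)}{\delta} + \mu_I(E_\delta) + \nu_I(E_\delta).
\]
Taking $\delta = 2^{-k}$, the preimage $T_I^{-1}(E_\delta)$ is a disjoint union of three depth-$k$ dyadic subintervals $J$ of $I$ (the leftmost one and the two adjacent to the midpoint), so $\mu_I(E_\delta), \nu_I(E_\delta)$ are sums of ratios $\mu(J)/\mu(I)$ and $\nu(J)/\nu(I)$. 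Both the $(\calT, D)$-doubling of $\mu$ on $\calT$ and the dyadic $D_\nu$-doubling of $\nu$ then bound each such ratio by $\rho^k$ for a constant $\rho = \rho(D, D_\nu) \in (0, 1)$, provided the descending chain stays inside $\calT$.

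Squaring the pointwise bound, multiplying by $\mu(I)$, and summing over $I \in \calT$, the $\alpha_{\mu,\nu}^2$ piece produces the first term of the claimed inequality at once. The crux — and the step I expect to be the main obstacle — is to show that the squared-error contribution is bounded by $\mu(\Top(\calT))$: on each single $I$ the error is only bounded by a constant, so a naive estimate produces $\sum_{I \in \calT} \mu(I)$, which is infinite for infinitely deep trees. I expect the resolution to require either a depth-dependent choice $k = k(I)$ balancing the Lipschitz blow-up $4^{k(I)}$ against the geometric decay $\rho^{2k(I)}$ — combining with $\sum_{I \in \calT \text{ at depth } d} \mu(I) = \mu(\Top(\calT))$ to yield a geometric series in $d$ — or a recursive/stopping-time argument on $\calT$ in which the errors at a non-leaf $I$ are re-expressed in terms of $\alpha$- or $\Delta$-quantities at the children $I_\pm$, with the base case handled by the trivial bound $\Delta_{\mu,\nu}^2(I) \le 1$ and the disjointness $\sum_{I \in \Leaves(\calT)} \mu(I) \le \mu(\Top(\calT))$. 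In both scenarios, the simultaneous doubling of $\mu$ on $\calT$ and of $\nu$ dyadically is essential to keep the relevant series convergent.
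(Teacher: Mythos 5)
Your pointwise bound
\[
\Delta_{\mu,\nu}(I) \le \frac{\alpha_{\mu,\nu}(I)}{\delta} + \mu_{I}(E_{\delta}) + \nu_{I}(E_{\delta})
\]
is correct, and you correctly identify the crux: on a single interval the error terms are only bounded by a constant, so a naive sum over $\calT$ diverges. But neither of your proposed resolutions actually closes the gap, and the first one fails for a concrete structural reason. The decay $\mu_{I}(E_{2^{-k}}) \lesssim \rho^{k}$ is \emph{not} available for arbitrary $k$: it only holds as long as the three dyadic chains from $I$ down to the depth-$k$ subintervals stay inside $\calT$. For an interval $I$ whose children are leaves, the error is of order $1$ no matter how large you take $k$. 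The only way to make the error term summable over $\calT$ is to let $k(I)$ equal (roughly) the number of generations before the chain exits $\calT$, and then the error sum is indeed controlled by $\mu(\Top(\calT))$ via the geometric decay of $1/\mu(I)$ along ancestor chains. But this same choice forces the coefficient $4^{k(I)}$ in front of $\alpha_{\mu,\nu}^{2}(I)\mu(I)$ to be unboundedly large — it can be exponential in the depth of the leaf — and there is no way to dominate $\sum_{I} 4^{k(I)}\alpha_{\mu,\nu}^{2}(I)\mu(I)$ by $\sum_{I}\alpha_{\mu,\nu}^{2}(I)\mu(I)$. So the single-test-function approach produces a genuine trade-off that cannot be resolved by tuning $\delta$, whether globally or per interval.

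The structural device you are missing is what the paper accomplishes with a Whitney decomposition of $\chi_{(0,1/2)}$: writing $\chi_{(0,1/2)} = \sum_{k\in\Z}\psi_{k}$ with $\psi_{k}$ supported on geometrically shrinking intervals near $0$ and near $1/2$, one obtains (via the recursive inequality \eqref{repInd}) the "Tail-Tip" estimate
\[
\Delta_{\mu,\nu}(I)\mu(I) \le \frac{C}{\tau}\sum_{J \in \Tail_{I}}\alpha_{\mu,\nu}(J)\mu(J) + \kappa\sum_{J \in \Tail_{I}}\Delta_{\mu,\nu}(J)\mu(J) + 2\mu(\Tip_{I}),
\]
in which $\Tail_{I}$ runs over a nested chain of subintervals of $I$ inside $\calT$ and $\Tip_{I}$ lies in the leaves. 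Two features here are exactly what your approach lacks: the $\alpha$-contribution is \emph{spread over a tail} of intervals, each with a \emph{constant} coefficient $C/\tau$ rather than an exponentially large one, so that after squaring and Cauchy–Schwarz the double sum $\sum_{I}\sum_{J\in\Tail_{I}}$ is handled by swapping order and using $\sum_{I\supset J}\mu(I)^{-1/2}\lesssim_{D}\mu(J)^{-1/2}$; and there is a small factor $\kappa$ (coming from the $\nu$-doubling and the smallness of $\tau$) in front of the $\Delta$-tail, which makes the $\Delta$-sum absorbable into the left-hand side. Your pointwise inequality has no $\Delta$-term at all and hence no absorption mechanism; this is precisely why the balancing act in your Resolution~1 cannot work. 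Your Resolution~2 gestures at the right idea (a recursive bound passing to children) but as written is too vague to constitute an argument; the paper's proof is in effect the rigorous version of that idea.
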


The "dyadic part" of Theorem \ref{mainCarleson} is an immediate corollary:
\begin{proof}[Proof of Theorem \ref{mainCarleson}, dyadic part] By hypothesis, both measures $\mu$ and $\nu$ are $(\calD,C)$-doubling. Hence, by the Carleson condition \eqref{carleson}, and Proposition \ref{deltaVsAlpha} applied to the trees $\calT_{J} := \{I \in \calD : I \subset J\}$, one has
\begin{displaymath} \sum_{I \subset J} \Delta_{\mu,\nu}^{2}(I)\mu(I) \lesssim_{C} \sum_{I \subset J} \alpha_{\mu,\nu}^{2}(I)\mu(I) + \mu(J) \lesssim \mu(J). \end{displaymath}
This is precisely the condition in Buckley's result, Theorem \ref{Bu}, so $\mu \in A^{\calD}_{\infty}(\nu)$.  \end{proof}

I then begin the proof of Proposition \ref{deltaVsAlpha}. It would, in fact, suffice to assume that $\nu$ is also just $(\calT,D_{\nu})$-doubling, but checking this would result in some unnecessary book-keeping below. The proof is based on the observation that $\chi_{(0,1/2)}$ can be written as a series of Lipschitz functions, each supported on sub-intervals of $[0,1]$. This motivates the following considerations.

Assume that 
\begin{displaymath} \Psi := \Psi_{0} := \sum_{j \geq 0} \psi_{j} \end{displaymath}
is a bounded function such that each $\psi_{j} \colon \R \to [0,\infty)$ is an $L_{j}$-Lipschitz function supported on some interval $I_{j} \in \calD_{j}$. Assume moreover that the intervals $I_{j}$ are nested: $[0,1) \supset I_{1} \supset I_{2} \ldots$. Then, as a first step in proving Proposition \ref{deltaVsAlpha}, I claim that
\begin{align}\label{representation+} \left| \int \Psi \, d\mu - \int \Psi \, d\nu \right| & \leq \sum_{k = 0}^{N} \frac{L_{k}}{2^{k}}\alpha_{\mu,\nu}(I_{k})\mu(I_{k})\\
& + \sum_{k = 0}^{N} \left(\frac{1}{\nu(I_{k + 1})} \int \Psi_{k + 1} \, d\nu \right) \Delta_{\mu,\nu}(I_{k})\mu(I_{k}) + 2\|\Psi\|_{\infty} \mu(I_{N + 1})\notag \end{align}
for any $N \in \{0,1,\ldots,\infty\}$, where
\begin{displaymath} \Psi_{k} := \sum_{j \geq k} \psi_{j}, \qquad m \geq 0. \end{displaymath}
For $N = \infty$, the symbol "$I_{N + 1}$" should be interpreted as the intersection of all the intervals $I_{j}$. I will first verify that, for any $m \geq 0$,
\begin{align} & \left| \frac{1}{\mu(I_{m})} \int \Psi_{m} \, d\mu - \frac{1}{\nu(I_{m})} \int \Psi_{m} \, d\nu \right| \notag\\
&\label{repInd} \leq \frac{L_{m}}{2^{m}}\alpha_{\mu,\nu}(I_{m}) + \left(\frac{1}{\nu(I_{m + 1})} \int \Psi_{m + 1} \, d\nu \right) \Delta_{\mu,\nu}(I_{m})\\
&\quad + \frac{\mu(I_{m + 1})}{\mu(I_{m})}\left| \frac{1}{\mu(I_{m + 1})} \int \Psi_{m + 1} \, d\mu - \frac{1}{\nu(I_{m + 1})} \int \Psi_{m + 1} \, d\nu \right| \notag \end{align}
from which it will be easy to derive \eqref{representation+}. If $\mu(I_{m}) = 0$, the corresponding term should be interpreted as "$0$" (recall that $\nu(I_{m})$ is never zero by the doubling assumption). The proof of \eqref{repInd} is straightforward. First, note that since $\psi_{m} \colon \R \to \R$ is an $L_{m}$-Lipschitz function supported on $I_{m}$, and $|I_{m}| = 2^{-m}$, one has
\begin{displaymath} \left| \frac{1}{\mu(I_{m})} \int \psi_{m} \, d\mu - \frac{1}{\nu(I_{m})} \int \psi_{m} \, d\nu \right| = \left| \int \psi_{m} \circ T_{I_{m}}^{-1} \, d\mu_{I_{m}} - \int \psi_{m} \circ T_{I_{m}}^{-1} \, d\nu_{I_{m}} \right| \leq \frac{L_{m}}{2^{m}}\alpha_{\mu,\nu}(I_{m}). \end{displaymath}
(The mappings $T_{I}$ are familiar from Definition \ref{alphas}). This gives rise to the first term in \eqref{repInd}. What remains is bounded by
\begin{align*} & \left| \frac{1}{\mu(I_{m})} \int \Psi_{m + 1} \, d\mu - \frac{1}{\nu(I_{m})} \int \Psi_{m + 1} \, d\nu \right|\\
& \leq \frac{\mu(I_{m + 1})}{\mu(I_{m})} \left| \frac{1}{\mu(I_{m + 1})} \int \Psi_{m + 1} \, d\mu - \frac{1}{\nu(I_{m + 1})} \int \Psi_{m + 1} \, d\nu \right|\\
&\quad + \left(\frac{1}{\nu(I_{m + 1})} \int \Psi_{m + 1} \, d\nu \right) \left| \frac{\mu(I_{m + 1})}{\mu(I_{m})} - \frac{\nu(I_{m + 1})}{\nu(I_{m})} \right|.   \end{align*} 
This is \eqref{repInd}, observing that
\begin{displaymath} \Delta_{\mu,\nu}(I_{m}) = \left| \frac{\mu(I_{m + 1})}{\mu(I_{m})} - \frac{\nu(I_{m + 1})}{\nu(I_{m})} \right|, \end{displaymath}
since either $I_{m + 1} = (I_{m})_{+}$ or $I_{m + 1} = (I_{m})_{-}$, and both possibilities give the same number $\Delta_{\mu,\nu}(I_{m})$. Finally, \eqref{representation+} is obtained by repeated application of \eqref{repInd}. By induction, one can check that $N$ iterations of \eqref{repInd} (starting from $m = 0$, and recalling that $\mu,\nu$ are probability measures on $[0,1)$) leads to
\begin{align} & \left| \int \Psi \, d\mu - \int \Psi \, d\nu \right| \leq \sum_{k = 0}^{N} \frac{L_{k}}{2^{k}}\alpha_{\mu,\nu}(I_{k})\mu(I_{k}) + \sum_{k = 0}^{N} \left(\frac{1}{\nu(I_{k + 1})} \int \Psi_{k + 1} \, d\nu \right) \Delta_{\mu,\nu}(I_{k})\mu(I_{k})\notag \\
&\label{form17} \qquad + \mu(I_{N + 1}) \left|\frac{1}{\mu(I_{N + 1})} \int \Psi_{N + 1} \, d\mu - \frac{1}{\nu(I_{N + 1})} \int \Psi_{N + 1} \, d\nu \right|.  \end{align}
This gives \eqref{representation+} immediately, observing that $\|\Psi_{N + 1}\|_{\infty} \leq \|\Psi\|_{\infty}$. 

Now, it is time to specify the functions $\psi_{j}$. I first define a hands-on Whitney decomposition for $(0,1/2)$. Pick a small parameter $\tau > 0$, to be specified later, and let $U_{0} := [\tau,1/2 - \tau)$. Then, set $U_{-k} := [\tau 2^{-k},\tau 2^{-k + 1})$ and $U_{k} := 1/2 - U_{-k}$ for $k \geq 1$. Let $\{\psi_{k}\}_{k \in \Z}$ be a partition of unity subordinate to slightly enlarged versions of the sets $U_{k}$, $k \in \Z$. By this, I first mean that each $\psi_{k}$ is non-negative and $L_{k}$-Lipschitz with 
\begin{equation}\label{LipschitzBound} L_{k} \leq \frac{C2^{|k|}}{\tau}. \end{equation}
Second, the supports of the functions $\psi_{k}$ should satisfy $\psi_{0} \subset [\tau/2,1/2 - \tau/2)$, 
\begin{displaymath} \spt \psi_{-k} \subset [(\tau/2) 2^{-k},2\tau 2^{-k + 1}) \subset (0,2\tau 2^{-k + 1}) \quad \text{and} \quad \psi_{k} \subset 1/2 - (0,2\tau 2^{-k + 1}) \end{displaymath}
for $k \geq 1$. Third,
\begin{displaymath} \sum_{k \in \Z} \psi_{k} = \chi_{(0,1/2)}. \end{displaymath}
Let $\Psi^{-} := \sum_{k > 0} \psi_{-k} + \psi_{0}/2$ and $\Psi^{+} := \sum_{k > 0} \psi_{k} + \psi_{0}/2$. Then
\begin{equation}\label{piece2} \Delta_{\mu,\nu}([0,1)) \leq \left| \int \Psi^{-} \, d\mu - \int \Psi^{-} \, d\nu \right| + \left|\int \Psi^{+} \, d\mu - \int \Psi^{+} \, d\nu \right|. \end{equation}
This is the only place in the paper, where the assumption of $\mu$ not charging the boundaries of dyadic intervals is used (however, the estimate \eqref{piece2} will eventually be applied to all the measures $\mu_{I}$, $I \in \calD$, so the full strength of the hypothesis is needed). The function $\Psi^{-}$ is precisely of the form treated above with $I_{j} := [0,2^{-j})$, since clearly $\spt \psi_{-k} \subset I_{k}$. Applying the inequality \eqref{representation+} with any $N_{1} \in \{0,1,\ldots,\infty\}$ yields
\begin{align}\label{form18} \left| \int \Psi^{-} \, d\mu - \int \Psi^{-} \, d\nu \right| & \leq \sum_{k = 0}^{N_{1}} \frac{L_{-k}}{2^{k}}\alpha_{\mu,\nu}(I_{k})\mu(I_{k})\\
& + \sum_{k = 0}^{N_{1}} \left(\frac{1}{\nu(I_{k + 1})} \int \Psi^{-}_{k + 1} \, d\nu \right) \Delta_{\mu,\nu}(I_{k})\mu(I_{k}) + 2\mu(I_{N_{1} + 1}).\notag \end{align}
Next, observe that each function $\Psi^{-}_{k + 1}$, $k \geq 0$, is bounded by $1$ and vanishes outside
\begin{displaymath} \bigcup_{j = k + 1}^{\infty} \spt \psi_{-k} \subset (0,2\tau 2^{-k}). \end{displaymath}
It follows that
\begin{displaymath} \frac{1}{\nu(I_{k + 1})} \int \Psi^{-}_{k + 1} \, d\nu \leq \frac{\nu((0,2\tau 2^{-k}))}{\nu(I_{k + 1})} = o_{D_{\nu}}(\tau), \end{displaymath}
where the implicit constants only depend on the dyadic doubling constant $D_{\nu}$ of $\nu$. In the sequel, I assume that $\tau$ is so small that $o_{D_{\nu}}(\tau) \leq \kappa$, where $\kappa > 0$ is another small constant, which will eventually depend on the $(\calT,D)$-doubling constant $D$ for $\mu$. Recalling also \eqref{LipschitzBound}, the estimate \eqref{form18} then becomes
\begin{equation}\label{piece1} \left| \int \Psi^{-} \, d\mu - \int \Psi^{-} \, d\nu \right| \leq \frac{C}{\tau} \sum_{k = 0}^{N_{1}} \alpha_{\mu,\nu}(I_{k})\mu(I_{k}) + \kappa \sum_{k = 0}^{N_{1}} \Delta_{\mu,\nu}(I_{k})\mu(I_{k}) + 2\mu(I_{N_{1} + 1}). \end{equation}
The last term simply vanishes, if $N_{1} = \infty$, because $\mu(\{0\}) = 0$. A heuristic point to observe is that the left hand side is roughly $\Delta_{\mu,\nu}([0,1])$; the right hand side also contains the same term, but multiplied by a small constant $\kappa > 0$. This gain is "paid for" by the large constant $C/\tau$. 

Next, the estimate is replicated for $\Psi^{+}$. This time, the inequality \eqref{representation+} is applied to the sequence $\tilde{I}_{0} = [0,1)$, $\tilde{I}_{1} = [0,1/2)$, $\tilde{I}_{2} = (\tilde{I}_{1})_{+}$, and in general $\tilde{I}_{k + 1} = (\tilde{I}_{k})_{+}$ for $k \geq 1$ (here $J_{+}$ is the right half of $J$). Then, if $\tau$ is small enough, it is again clear that $\spt \psi_{k} \subset \tilde{I}_{k}$. Thus, by inequality \eqref{representation+},
\begin{align}\label{form26} \left| \int \Psi^{+} \, d\mu - \int \Psi^{+} \, d\nu \right| & \leq \sum_{k = 0}^{N_{2}} \frac{L_{k}}{2^{k}}\alpha_{\mu,\nu}(\tilde{I}_{k})\mu(\tilde{I}_{k})\\
& + \sum_{k = 0}^{N_{2}} \left(\frac{1}{\nu(\tilde{I}_{k + 1})} \int \Psi^{+}_{k + 1} \, d\nu \right) \Delta_{\mu,\nu}(\tilde{I}_{k})\mu(\tilde{I}_{k}) + 2\mu(\tilde{I}_{N_{2} + 1})\notag \end{align}
for any $N_{2} \geq 0$. As before, the term $\mu(\tilde{I}_{N_{2}})$ vanishes for $N_{2} = \infty$ (because $\mu(\{\tfrac{1}{2}\}) = 0$), and one can ensure
\begin{displaymath} \frac{1}{\nu(\tilde{I}_{k + 1})} \int \Psi^{+}_{k + 1} \, d\nu \leq \kappa \end{displaymath}
by choosing $\tau = \tau(D_{\nu}) > 0$ small enough. Consquently (recalling \eqref{piece2}), \eqref{piece1} and \eqref{form26} together imply 
\begin{equation}\label{CalF} \Delta_{\mu,\nu}([0,1)) \leq \frac{C}{\tau} \sum_{I \in \Tail} \alpha_{\mu,\nu}(I)\mu(I) + \kappa \sum_{I \in \Tail} \Delta_{\mu,\nu}(I)\mu(I) + 2\mu(I_{N_{1} + 1}) + 2\mu(\tilde{I}_{N_{2} + 1}). \end{equation}
Here $\Tail$ is the collection of all the intervals $I_{0},\ldots,I_{N_{1}}$ and $\tilde{I}_{0},\ldots,\tilde{I}_{N_{2}}$. The intervals $[0,1)$ and $[0,1/2)$ arise a total of two times from \eqref{piece1} and \eqref{form26}, but this has no visible impact on the end result, \eqref{CalF}. 
\begin{figure}[h!]
\begin{center}
\includegraphics[scale = 0.6]{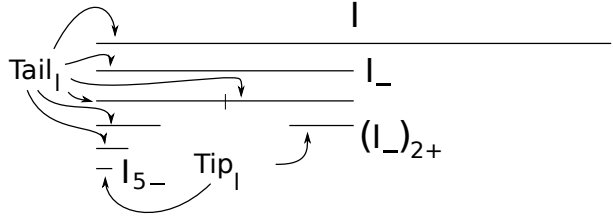}
\caption{An example of $\Tail_{I}(4,1)$ and $\Tip_{I}$.}\label{fig1}
\end{center}
\end{figure}
The estimate \eqref{CalF} generalises in a simple way to other intervals $I \in \calD$, besides $I = [0,1)$, but requires an additional piece of notation. Let $I \in \calD$, and write $I_{0-} := I =: I_{0+}$. For $k \geq 1$, define $I_{k-} := (I_{(k - 1)-})_{-}$ and $I_{k+} := (I_{(k - 1)+})_{+}$. Now, for a fixed dyadic interval $I \subset [0,1)$, and $N_{1},N_{2} \geq 0$, let $\Tail_{I} = \Tail_{I}(N_{1},N_{2})$ be the collection of subintervals of $I$, which includes $I_{k-}$ for all $0 \leq k \leq N_{1}$ and $(I_{-})_{k+}$ for all $0 \leq k \leq N_{2}$, see Figure \ref{fig1}. Then, the generalisation of \eqref{CalF} reads
\begin{equation}\label{CalF+} \Delta_{\mu,\nu}(I)\mu(I) \leq \frac{C}{\tau} \sum_{J \in \Tail_{I}} \alpha_{\mu,\nu}(J)\mu(J) + \kappa \sum_{J \in \Tail_{I}} \Delta_{\mu,\nu}(J)\mu(J) + 2\mu(\Tip_{I}), \end{equation}
where $\Tip_{I} = I_{(N_{1} + 1)-} \cup (I_{-})_{(N_{2} + 1)+}$. If $N_{1} < \infty$ and $N_{2} = \infty$, for instance, then $\Tip_{I} = I_{(N_{1} + 1)-}$. The proof is nothing but an application of \eqref{CalF} to the measures $\mu_{I}$ and $\nu_{I}$. For minor technical reasons, I also wish to allow the choice $N_{1} = 0$ and $N_{2} = -1$: by definition, this choice means that $\Tail_{I} = \{I\}$ and $\Tip_{I} := I_{-}$. It is easy to see that \eqref{CalF+} remains valid in this case, with "$2$" replaced by "$4$" (for $I = [0,1)$, this follows by applying \eqref{piece1} and \eqref{form26} with the choices $N_{1} = 0 = N_{2}$).

Now, the table is set to prove Proposition \ref{deltaVsAlpha}, which I recall here:
\begin{proposition} Let $\mu,\nu$ be measures satisfying the assumptions of the section, and let $\calT \subset \calD$ be a tree. Moreover, assume that $\mu$ is $(\calT,D)$-doubling for some constant $D \geq 1$. Then
\begin{displaymath} \sum_{I \in \calT} \Delta_{\mu,\nu}^{2}(I)\mu(I) \lesssim_{D_{\nu},D} \sum_{I \in \calT \setminus \Leaves(\calT)} \alpha_{\mu,\nu}^{2}(I)\mu(I) + \mu(\Top(\calT)). \end{displaymath}
\end{proposition}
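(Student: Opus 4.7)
The plan is to apply the inequality \eqref{CalF+} separately to each non-leaf $I \in \calT$, choosing the tail parameters $N_{1}(I), N_{2}(I) \in \N \cup \{\infty\}$ maximally so that $\Tail_{I} \subset \calT$. With this choice, either $N_{i}(I) = \infty$ (and the corresponding part of $\Tip_{I}$ has zero $\mu$-mass by non-atomicity of $\mu$), or $I_{N_{1}(I)-}$ (resp.\ $(I_{-})_{N_{2}(I)+}$) is a leaf of $\calT$ and its child appearing in $\Tip_{I}$ lies outside $\calT$.

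The crucial ingredient is that $(\calT,D)$-doubling forces geometric mass decay along these spines: since each step stays in $\calT$, the sibling retains at least a $1/D$-fraction of the parent's mass, so the chosen child carries at most $(1 - 1/D)\mu(\cdot)$. Iterating, $\mu(I_{k-}) \leq r^{k}\mu(I)$ with $r := 1 - 1/D < 1$, and analogously for $(I_{-})_{k+}$; in particular $\sum_{J \in \Tail_{I}}\mu(J) \leq C_{D}\mu(I)$. Multiplying \eqref{CalF+} by $\Delta_{\mu,\nu}(I)$, then applying weighted AM-GM together with the weighted Cauchy-Schwarz $(\sum_{J}\alpha_{\mu,\nu}(J)\mu(J))^{2} \leq C_{D}\mu(I)\sum_{J}\alpha_{\mu,\nu}^{2}(J)\mu(J)$, and finally absorbing a small multiple of $\Delta_{\mu,\nu}^{2}(I)\mu(I)$ on the left, I would derive the single-interval estimate
\begin{displaymath}
\Delta_{\mu,\nu}^{2}(I)\mu(I) \leq C(D,\tau) \sum_{J \in \Tail_{I}}\alpha_{\mu,\nu}^{2}(J)\mu(J) + C(D)\kappa^{2}\sum_{J \in \Tail_{I}}\Delta_{\mu,\nu}^{2}(J)\mu(J) + C\mu(\Tip_{I}).
\end{displaymath}

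Summing over non-leaves $I \in \calT$ and adding the trivial contribution from the leaves (using $\Delta_{\mu,\nu} \leq 1$ and $\sum_{L \in \Leaves(\calT)}\mu(L) \leq \mu(\Top(\calT))$), the remaining work is to swap the order in the resulting double sums and to control the tip sum. The hard part will be the multiplicity $N(J) := \#\{I \in \calT \setminus \Leaves(\calT) : J \in \Tail_{I}\}$, which is not uniformly bounded, since $J$ can belong to $\Tail_{I}$ for every ancestor $I$ on its ``up-left'' spine in $\calT$ (and symmetrically on the other side). The same geometric decay should resolve this: if $J$ lies at depth $\ell$ on a spine with top $I_{J}^{*}$, then $\mu(J) \leq r^{\ell}\mu(I_{J}^{*})$, so grouping the swapped sums by spine-tops and summing the resulting geometric series in $\ell$ converts the multiplicity into a constant depending only on $D$. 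The tip sum $\sum_{I}\mu(\Tip_{I}) \lesssim_{D} \mu(\Top(\calT))$ is controlled analogously (each tip is a child of a leaf of $\calT$, counted with bounded ``spinal multiplicity''). Choosing $\kappa = \kappa(D, D_{\nu}) > 0$ small enough (and then $\tau = \tau(D_{\nu})$ accordingly) finally permits absorbing the $\Delta_{\mu,\nu}^{2}$-sum on the right into the left, producing the claimed bound.
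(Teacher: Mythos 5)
Your high-level plan is the right one and matches the paper's: apply the $\Tail$--$\Tip$ inequality \eqref{CalF+} with $N_{1},N_{2}$ chosen so that $\Tail_{I} \subset \calT \setminus \Leaves(\calT)$ and $\Tip_{I} \subset \Leaves(\calT)$, square, swap the order of summation, control the resulting multiplicity via the geometric decay $\mu(I_{k-}) \leq (1-1/D)^{k}\mu(I)$ supplied by $(\calT,D)$-doubling, and finally absorb the $\kappa$-term. However, the specific Cauchy--Schwarz you chose does not leave you enough decay to carry out the swap, and the fix you sketch does not repair this.

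The issue is quantitative. You apply
\begin{displaymath}
\Bigl(\sum_{J \in \Tail_{I}}\alpha_{\mu,\nu}(J)\mu(J)\Bigr)^{2} \leq C_{D}\,\mu(I)\sum_{J \in \Tail_{I}}\alpha_{\mu,\nu}^{2}(J)\mu(J),
\end{displaymath}
so after dividing through by $\mu(I)$ your single-interval estimate is
\begin{displaymath}
\Delta_{\mu,\nu}^{2}(I)\mu(I) \lesssim_{D,\tau} \sum_{J \in \Tail_{I}}\alpha_{\mu,\nu}^{2}(J)\mu(J) + \kappa^{2}\sum_{J \in \Tail_{I}}\Delta_{\mu,\nu}^{2}(J)\mu(J) + C\,\frac{\mu(\Tip_{I})^{2}}{\mu(I)}.
\end{displaymath}
When you now sum over $I$ and swap, you obtain $\sum_{J}\alpha_{\mu,\nu}^{2}(J)\mu(J)\,N(J)$ with $N(J) = \#\{I : J \in \Tail_{I}\}$, and you correctly flag that $N(J)$ is unbounded. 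But your proposed cure -- grouping by spine-tops $I^{*}$ and using $\mu(J_{\ell}) \leq r^{\ell}\mu(I^{*})$ -- only yields, per spine, a bound of the form $\sum_{\ell}\alpha^{2}(J_{\ell})\mu(J_{\ell})\ell \lesssim_{D}\mu(I^{*})$, and this is \emph{not} controlled by $\sum_{\ell}\alpha^{2}(J_{\ell})\mu(J_{\ell})$; moreover the spine-tops are not pairwise disjoint and $\sum_{I^{*}}\mu(I^{*})$ is in general not bounded by $\mu(\Top(\calT))$. Concretely, on the full dyadic tree of depth $L$ with $\mu = \calL$, putting $\alpha^{2}(J) = 1$ exactly on the $J$'s whose up-left chain has length $\approx \log_{2}L$ gives $\sum_{J}\alpha^{2}(J)\mu(J)N(J) \gtrsim \log L$ while $\sum_{J}\alpha^{2}(J)\mu(J) + \mu(\Top(\calT)) \lesssim 1$; so the inequality you need, namely $\sum_{J}\alpha^{2}\mu N \lesssim_{D}\sum_{J}\alpha^{2}\mu + \mu(\Top(\calT))$, is simply false. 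The same objection applies to the $\Delta^{2}$-sum, which you then cannot absorb.

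The paper avoids this by choosing the Cauchy--Schwarz split differently: write $\alpha(J)\mu(J) = \bigl(\alpha(J)\mu(J)^{3/4}\bigr)\cdot\mu(J)^{1/4}$, so that
\begin{displaymath}
\Bigl(\sum_{J \in \Tail_{I}}\alpha_{\mu,\nu}(J)\mu(J)\Bigr)^{2} \leq \Bigl(\sum_{J}\alpha_{\mu,\nu}^{2}(J)\mu(J)^{3/2}\Bigr)\Bigl(\sum_{J}\mu(J)^{1/2}\Bigr) \lesssim_{D} \mu(I)^{1/2}\sum_{J}\alpha_{\mu,\nu}^{2}(J)\mu(J)^{3/2}.
\end{displaymath}
After dividing by $\mu(I)^{2}$ the single-interval estimate carries the weight $\mu(J)^{3/2}/\mu(I)^{1/2}$, and the swapped inner sum becomes $\sum_{I \supset J}\mu(I)^{-1/2}$, which is $\lesssim_{D}\mu(J)^{-1/2}$ precisely because $\mu(I) \geq r^{-k}\mu(J)$ when $J$ sits $k$ generations below $I$ on a spine. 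That $(\mu(J)/\mu(I))^{1/2}$ factor is exactly the decay you discarded, and without it the multiplicity cannot be tamed. If you re-run your argument with this weighted Cauchy--Schwarz the rest of the plan (summing over $I$, handling the tip term $\mu(\Tip_{I})^{2}/\mu(I)$ similarly, absorbing via small $\kappa$, and passing to finite subtrees if the left side is not a priori finite) goes through exactly as in the paper.
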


\begin{proof} The sum over $I \in \Leaves(\calT)$ is evidently bounded by $4\mu(\Top(\calT))$, so it suffices to consider 
\begin{displaymath} I \in \calT \setminus \Leaves(\calT) =: \calT^{-}. \end{displaymath}
Let $I \in \calT$, and define the number $N_{1} = N_{1}(I) \geq 0$ as the smallest index so that $I_{(N_{1} + 1)-} \in \Leaves(\calT)$. If no such index exists, set $N_{1} = \infty$. If $I_{-} \in \Leaves(\calT)$, then $N_{1} = 0$, and I define $N_{2} = -1$: then $\Tail_{I} := \{I\}$, and $\Tip_{I} := I_{-}$. Otherwise, if $I_{-} \in \calT^{-}$, let $N_{2} \geq 0$ be the smallest index such that $(I_{-})_{(N_{2} + 1)+} \in \Leaves(\calT)$. If no such index exists, let $N_{2} = \infty$. Now $\Tail_{I} \subset \calT^{-}$ and $\Tip_{I} \subset \Leaves(\calT)$ are defined as after \eqref{CalF+}. Start by the following combination of \eqref{CalF+} and Cauchy-Schwarz:
\begin{align} \Delta_{\mu,\nu}^{2}(I)\mu(I)^{2} & \lesssim \frac{1}{\tau^{2}} \left( \sum_{J \in \Tail_{I}} \alpha_{\mu,\nu}^{2}(J)\mu(J)^{3/2} \right)\left(\sum_{J \in \Tail_{I}} \mu(J)^{1/2} \right)\notag\\
&\label{form19} + \kappa^{2} \left(\sum_{J \in \Tail_{I}} \Delta_{\mu,\nu}^{2}(J)\mu(J)^{3/2} \right) \left(\sum_{J \in \Tail_{I}} \mu(J)^{1/2} \right) + \mu(\Tip_{I})^{2}. \end{align} 
The factors $\sum_{J \in \Tail_{I}} \mu(J)^{1/2}$ are under control, thanks to the $(\calT,D)$-doubling hypothesis on $\mu$, and the fact that $\Tail_{I} \subset \calT$. Since $\Tail_{I}$ consists of two "branches" of nested intervals inside $I$, and the $(\calT,D)$-doubling hypothesis implies that the $\mu$-measures of intervals decay geometrically along these branches, one arrives at
\begin{displaymath} \sum_{J \in \Tail_{I}} \mu(J)^{1/2} \lesssim_{D} \mu(I)^{1/2}. \end{displaymath}
Thus, by \eqref{form19},
\begin{equation}\label{form20} \Delta_{\mu,\nu}^{2}(I)\mu(I) \lesssim_{D}\frac{1}{\tau^{2}} \sum_{J \in \Tail_{I}} \alpha_{\mu,\nu}^{2}(J)\frac{\mu(J)^{3/2}}{\mu(I)^{1/2}} + \kappa^{2} \sum_{J \in \Tail_{I}} \Delta_{\mu,\nu}^{2}(J)\frac{\mu(J)^{3/2}}{\mu(I)^{1/2}} + \frac{\mu(\Tip_{I})^{2}}{\mu(I)}. \end{equation}
The constant $\kappa > 0$ will have to be chosen so small, eventually, that its product with the implicit constants above is notably less than one. From now on, the precise restriction $J \in \Tail_{I}$ can be replaced by the conditions $J \in \calT^{-}$ and $J \subset I$. With this in mind, observe first that
\begin{align*} \sum_{I \in \calT^{-}} \mathop{\sum_{J \in \calT^{-}}}_{J \subset I} \alpha_{\mu,\nu}^{2}(J) \frac{\mu(J)^{3/2}}{\mu(I)^{1/2}} & = \sum_{J \in \calT^{-}} \alpha_{\mu,\nu}^{2}(J)\mu(J)^{3/2} \mathop{\sum_{I \in \calT^{-}}}_{I \supset J} \frac{1}{\mu(I)^{1/2}}\\
& \lesssim_{D} \sum_{J \in \calT^{-}} \alpha_{\mu,\nu}^{2}(J)\mu(J).  \end{align*} 
The final inequality uses, again, the geometric decay of $\mu$-measures of intervals in $\calT$. A similar estimate can be performed for the second term in \eqref{form20}. As for the third term,
\begin{align*} \sum_{I \in \calT^{-}} \frac{\mu(\Tip_{I})^{2}}{\mu(I)} & \lesssim \sum_{I \in \calT^{-}} \frac{\mu(I_{(N_{1} + 1)-})^{2} + \mu((I_{-})_{(N_{2} + 1)+})^{2}}{\mu(I)}\\
& \lesssim \sum_{J \in \Leaves(\calT)} \mu(J)^{2} \mathop{\sum_{I \in \calT^{-}}}_{I \supset J} \frac{1}{\mu(I)} \lesssim_{D} \mu(\Leaves(\calT)), \end{align*}
relying once more on the geometric decay of $\mu$ in $\calT$. 
Combining all the estimates gives
\begin{equation}\label{form21} \sum_{I \in \calT^{-}} \Delta_{\mu,\nu}^{2}(I)\mu(I) \lesssim_{D} \frac{1}{\tau^{2}} \sum_{I \in \calT^{-}} \alpha_{\mu,\nu}^{2}(I)\mu(I) + \kappa^{2} \sum_{I \in \calT^{-}} \Delta_{\mu,\nu}^{2}(I)\mu(I) + \mu(\Leaves(\calT)). \end{equation}
If the left hand side is \emph{a priori} finite, the proof of Proposition \ref{deltaVsAlpha} is now completed by choosing $\kappa$ small enough, depending on $D$. If not, consider any finite sub-tree $\calT_{j} \subset \calT$ with $\Top(\calT_{j}) = \Top(\calT)$. Then, the proof above gives \eqref{form21} with $\calT_{j}$ in place of $\calT$. Hence
\begin{displaymath} \sum_{I \in \calT_{j}^{-}} \Delta_{\mu,\nu}^{2}(I)\mu(I) \lesssim_{D} \sum_{I \in \calT_{j}^{-}} \alpha_{\mu,\nu}^{2}(I)\mu(I) + \mu(\Top(\calT)), \end{displaymath}
where the constants do not depend on the choice of $\calT_{j}$. Now the proposition follows by letting $\calT_{j} \nearrow \calT$. 
\end{proof}

\section{Absolute continuity of tree-adapted measures}\label{treeAbsCont} Recall the concepts of tree, leaves and boundaries from Definition \ref{treeDef}, and the notion of $(\calT,D)$-doubling measures from Definition \ref{treeDoubling}. In the present section, I assume that $\calT \subset \calD$ is a tree, and $\mu,\nu$ are two finite Borel measures, which satisfy the following two assumptions:
\begin{itemize}
\item[(A)] $\min\{\mu(\Top(\calT)),\nu(\Top(\calT))\} > 0$, and
\item[(B)] $\mu,\nu$ are $(\calT,D)$-doubling for some constant $D \geq 1$.
\end{itemize}
In particular, the assumptions imply that
\begin{displaymath}\mu(I) > 0 \quad \text{and} \quad \nu(I) > 0, \qquad I \in \calT. \end{displaymath}
For reasons to become apparent soon, I define the \emph{$(\calT,\mu)$-adaptation of $\nu$},
\begin{displaymath} \nu_{\calT} := \nu|_{\partial \calT} + \sum_{I \in \Leaves(\calT)} \tfrac{\nu}{\mu}(I) \cdot \mu|_{I}, \end{displaymath}
where $\tfrac{\nu}{\mu}(I) := \nu(I)/\mu(I)$. Note that
\begin{equation}\label{form8+} \nu_{\calT}(I) = \nu(I), \qquad I \in \calT, \end{equation}
because $\partial \calT$ is disjoint from the leaves, which are also pairwise disjoint. In particular, $\nu_{\calT}(\Top(\calT)) = \nu(\Top(\calT))$. The main result of the section is the following:
\begin{proposition}\label{mainProp} Assume \emph{(A)} and \emph{(B)}, and that
\begin{displaymath} \sum_{I \in \calT \setminus \Leaves(\calT)} \Delta_{\mu,\nu}^{2}(I)\mu(I) < \infty. \end{displaymath}
Then $\mu|_{\Top(\calT)} \ll \nu_{\calT}$. In particular $\mu|_{\partial \calT} \ll \nu$.
\end{proposition}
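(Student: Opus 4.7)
I will study the non-negative $\mu$-martingale
\begin{displaymath} g_{k}(x) := \nu_{\calT}(I_{k}(x))/\mu(I_{k}(x)), \end{displaymath}
where $I_{k}(x)$ denotes the unique element of the partition $\calT_{k}$ of $\Top(\calT)$ containing $x$, and $\calT_{k}$ consists of the intervals of $\calT \setminus \Leaves(\calT)$ of side length $2^{-k}$, together with every leaf of side length $\geq 2^{-k}$. The $\mu$-martingale property is immediate from $\nu_{\calT}(I) = \nu(I)$ for $I \in \calT$, and Doob's convergence theorem yields $g_{k} \to g_{\infty} \in [0,\infty)$ $\mu$-a.e. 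A standard martingale-differentiation argument identifies $g_{\infty}$ with the $\mu$-density of the absolutely continuous part of $\nu_{\calT}$ with respect to $\mu$, so the assertion $\mu|_{\Top(\calT)} \ll \nu_{\calT}$ is equivalent to $g_{\infty} > 0$ $\mu$-a.e. On $\Leaves(\calT)$ the sequence $g_{k}$ is eventually the positive number $\nu(L)/\mu(L)$ on each leaf $L$, so the task reduces to verifying $g_{\infty} > 0$ $\mu$-a.e.\ on $\partial \calT$.

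\textbf{Key comparison.} For $x \in \partial \calT$ and $J = I_{k-1}(x) \in \calT \setminus \Leaves(\calT)$, set $\alpha_{\pm} := \mu(J_{\pm})/\mu(J)$ and $\beta_{\pm} := \nu(J_{\pm})/\nu(J)$. Then $g_{k}(x)/g_{k-1}(x) = \beta_{\pm}/\alpha_{\pm}$ and $|\alpha_{\pm} - \beta_{\pm}| = \Delta_{\mu,\nu}(J)$. Assumption \emph{(B)} forces $\alpha_{\pm},\beta_{\pm} \in [1/D, \, 1 - 1/D]$. Two quantitative consequences follow: the bound $|\log(\beta_{\pm}/\alpha_{\pm})| \leq C_{D}\Delta_{\mu,\nu}(J)$, from $|\log(1+u)| \lesssim |u|$ on compact subsets of $(-1,\infty)$; and a two-sided Pinsker-type estimate for the Kullback--Leibler divergence of two-point distributions bounded away from $\{0,1\}$,
\begin{displaymath} c_{D} \Delta_{\mu,\nu}^{2}(J) \;\leq\; D_{\mathrm{KL}}(\alpha\|\beta) \;\leq\; C_{D} \Delta_{\mu,\nu}^{2}(J), \end{displaymath}
where $D_{\mathrm{KL}}(\alpha\|\beta) := \sum_{\pm} \alpha_{\pm}\log(\alpha_{\pm}/\beta_{\pm})$.

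\textbf{Doob--Meyer and martingale convergence.} Jensen's inequality makes $\log g_{k}$ a $\mu$-supermartingale, and its Doob--Meyer decomposition takes the explicit form $\log g_{k} = M_{k} - \Theta_{k}$, where $\Theta_{k} := \sum_{j \leq k} D_{\mathrm{KL}}(\alpha_{j}\|\beta_{j})$ and $(M_{k})$ is a $\mu$-martingale. The bound on $|\log(\beta_{\pm}/\alpha_{\pm})|$ gives
\begin{displaymath} \E_{\mu}\bigl[(M_{k} - M_{k-1})^{2} \mid \calT_{k-1}\bigr](x) \;\leq\; C_{D}\Delta_{\mu,\nu}^{2}(I_{k-1}(x)), \end{displaymath}
so the total conditional quadratic variation of $M$ is pointwise dominated by $C_{D}S(x)$, where $S(x) := \sum_{k} \Delta_{\mu,\nu}^{2}(I_{k}(x))\chi_{\calT \setminus \Leaves(\calT)}(I_{k}(x))$. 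The hypothesis
\begin{displaymath} \int S \, d\mu = \sum_{I \in \calT \setminus \Leaves(\calT)} \Delta_{\mu,\nu}^{2}(I)\mu(I) < \infty \end{displaymath}
forces $S < \infty$ $\mu$-a.e. The classical theorem that a martingale with $\mu$-a.s.\ summable conditional square-increments converges $\mu$-a.s.\ then delivers $M_{k} \to M_{\infty} \in \R$ $\mu$-a.e., while the Pinsker upper bound ensures $\Theta_{k} \nearrow \Theta_{\infty} \leq C_{D}S(x) < \infty$ $\mu$-a.e. Consequently $\log g_{k} \to M_{\infty} - \Theta_{\infty}$ is $\mu$-a.e.\ finite, i.e.\ $g_{\infty} > 0$ $\mu$-a.e.\ on $\partial \calT$, which is what had to be shown.

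\textbf{Anticipated obstacle.} The delicate input is the two-sided comparison $D_{\mathrm{KL}}(\alpha\|\beta) \asymp \Delta_{\mu,\nu}^{2}(J)$: it depends on \emph{both} $\alpha_{\pm}$ and $\beta_{\pm}$ staying in the compact interval $[1/D, 1 - 1/D]$, so the full $(\calT,D)$-doubling hypothesis \emph{(B)} on $\mu$ \emph{and} $\nu$ is essential; without it, the log-increments of $g_{k}$ can swamp the compensator $\Theta_{k}$ in either direction, and square-summability of $\Delta_{\mu,\nu}(I_{k})$ alone ceases to control $\log g_{k}$. Once the Pinsker comparison is in place, the martingale-convergence step is standard.
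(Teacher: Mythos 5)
Your proposal is correct, and it reaches the same conclusion by a noticeably different technical vehicle. The paper's proof, following Fefferman--Kenig--Pipher, writes the ratio $\nu_\calT/\mu(I)$ as an explicit product over $\mu$-adapted Haar functions, $\nu_\calT/\mu(I) = \prod_{J \supsetneq I}(1 + a_{J}h_{J}^{\mu})$, with the key observation $|a_{J}| = \Delta_{\mu,\nu}(J)$; the partial sums $g_{N} = \sum_{|J| > 2^{-N}} a_{J}h_{J}^{\mu}$ form a \emph{linear} martingale bounded in $L^{2}(\mu)$, from which a pointwise $\mu$-a.e.\ convergent subsequence is extracted, and the one-sided estimate $\log(1 + t) \geq t - C_{\delta}t^{2}$ (with $t = a_{J}h_{J}^{\mu}(x)$ bounded below by doubling) then gives a finite lower bound for $\log \nu_{\calT}/\mu(I)$ and hence $\limsup_{I \to x} \nu_{\calT}/\mu(I) > 0$. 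You instead work with the nonlinear martingale $\log g_{k}$ directly, exhibit its Doob--Meyer decomposition with the Kullback--Leibler divergence as compensator, repackage the Taylor bound for $\log$ as a two-sided Pinsker comparison $D_{\mathrm{KL}} \asymp_{D} \Delta_{\mu,\nu}^{2}$, and then invoke the convergence theorem for martingales with $\mu$-a.s.\ summable predictable quadratic variation. Both arguments use the $(\calT,D)$-doubling of $\mu$ and $\nu$ at exactly the same point --- to keep the multiplicative increments $\beta_{\pm}/\alpha_{\pm}$ bounded away from $0$. Your version is more theory-laden but proves the stronger statement that the full sequence $g_{k}$ converges $\mu$-a.e.\ (the paper settles for a subsequence and a $\limsup$), and it makes transparent that the hypothesis $\sum \Delta^{2}_{\mu,\nu}(I)\mu(I) < \infty$ is precisely an integrated bound on the predictable quadratic variation. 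The paper's route stays elementary: $L^{2}$-orthogonality plus a subsequence, rather than the more specialised conditional-variance convergence theorem.
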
 

\begin{remark} By the definition of $\nu_{\calT}$, it is obvious that $\mu|_{\Leaves(\calT)} \ll \nu_{\calT}$. So, the main point of Proposition \ref{mainProp} is to show that $\mu|_{\partial \calT} \ll (\nu_{\calT})|_{\partial \calT} = \nu|_{\partial \calT}$. \end{remark}

Since $\mu(\Top(\calT)) > 0$ and $\nu(\Top(\calT)) > 0$, one may assume without loss of generality that
\begin{displaymath} \mu(\Top(\calT)) = 1 = \nu(\Top(\calT)). \end{displaymath}
The proof of Proposition \ref{mainProp} is based on a "product representation" for $\nu_{\calT}$, relative to $\mu$, in the spirit of \cite[Theorem 3.22]{FKP} of Fefferman, Kenig and Pipher. Recall that every interval $I \in \calD$ has exactly two children: $I_{-}$ and $I_{+}$. Define the $\mu$-adapted Haar functions
\begin{displaymath} h_{I}^{\mu} := c_{I}^{+}\chi_{I_{+}} - c_{I}^{-}\chi_{I_{-}}, \qquad I \in \calT \setminus \Leaves(\calT), \end{displaymath}
where
\begin{displaymath} c_{I}^{+} := \frac{\mu(I)}{\mu(I_{+})} \quad \text{and} \quad c_{I}^{-} := \frac{\mu(I)}{\mu(I_{-})}. \end{displaymath}
This ensures that $\int h_{I}^{\mu} \, d\mu = 0$ for $I \in \calT \setminus \Leaves(\calT)$. Note that $\mu(I_{+}),\mu(I_{-}) > 0$, because $I_{+},I_{-} \in \calT$. Now, the plan is to define coefficients $a_{J} \in \R$, for $J \in \calT \setminus \Leaves(\calT)$, so that the following requirement is met:
\begin{equation}\label{form1} \mathop{\prod_{J \supsetneq I}}_{J \in \calT} (1 + a_{J}h_{J}^{\mu})(x) = \tfrac{\nu}{\mu}(I), \qquad x \in I \in \calT. \end{equation}
The left hand side of \eqref{form1} is certainly constant on $I$, so the equation has some hope; if $I = \Top(\calT)$, then the product is empty, and the right hand side of \eqref{form1} equals $1$ by the assumption $\mu(\Top(\calT)) = \nu(\Top(\calT)) = 1$. Now, assume that \eqref{form1} holds for some interval $I \in \calT \setminus \Leaves(\calT)$. Then $I_{-},I_{+} \in \calT$, so if \eqref{form1} is supposed to hold for $I_{-}$, one has
\begin{equation}\label{form2} \tfrac{\nu}{\mu}(I_{-}) = \mathop{\prod_{J \supsetneq I_{-}}}_{J \in \calT} (1 + a_{J}h_{J}^{\mu}) = (1 - c_{I}^{-}a_{I})\mathop{\prod_{J \supsetneq I}}_{J \in \calT} (1 + a_{J}h_{J}^{\mu}) = (1 - c_{I}^{-}a_{I})\tfrac{\nu}{\mu}(I), \end{equation}
and similarly
\begin{equation}\label{form3} \tfrac{\nu}{\mu}(I_{+}) = (1 + c_{I}^{+}a_{I})\tfrac{\nu}{\mu}(I). \end{equation}
From \eqref{form2} one solves
\begin{equation}\label{form4} a_{I} = \frac{\tfrac{\nu}{\mu}(I) - \tfrac{\nu}{\mu}(I_{-})}{\tfrac{\nu}{\mu}(I)c_{I}^{-}} = \frac{\mu(I_{-})}{\mu(I)} - \frac{\nu(I_{-})}{\nu(I)}, \end{equation}
and \eqref{form3} gives
\begin{equation}\label{form5} a_{I} = \frac{\tfrac{\nu}{\mu}(I_{+}) - \tfrac{\nu}{\mu}(I)}{\tfrac{\nu}{\mu}(I)c_{I}^{+}} = \frac{\nu(I_{+})}{\nu(I)} - \frac{\mu(I_{+})}{\mu(I)}. \end{equation}
Using that $\mu(I_{-})/\mu(I) = 1 - \mu(I_{+})/\mu(I)$ (and three other similar formulae), it is easy to see that the numbers on the right hand sides of \eqref{form4} and \eqref{form5} agree. So, $a_{I}$ can be defined consistently, and \eqref{form1} holds for $I_{+},I_{-} \in \calT$. Moreover, the formulae for $a_{I}$ look quite familiar:
\begin{obs}\label{mainObs} $|a_{I}| = \Delta_{\mu,\nu}(I)$ for $I \in \calT \setminus \Leaves(\calT)$.
\end{obs}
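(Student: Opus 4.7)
The observation is essentially immediate by inspection, so my plan is simply to unwrap the definitions and point out the matching formulas; there is no real obstacle here.

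First, I would recall the two candidate expressions for $a_I$ derived just above the observation. Equation \eqref{form4} gives
\begin{displaymath} a_{I} = \frac{\mu(I_{-})}{\mu(I)} - \frac{\nu(I_{-})}{\nu(I)}, \end{displaymath}
while equation \eqref{form5}, using $\mu(I_{+})/\mu(I) = 1 - \mu(I_{-})/\mu(I)$ and $\nu(I_{+})/\nu(I) = 1 - \nu(I_{-})/\nu(I)$, gives the same quantity. (This consistency has already been checked in the paragraph preceding the observation, and is what allowed $a_I$ to be defined at all.)

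Next, I would compare this with the definition \eqref{deltaNumbers} of the $\Delta$-numbers,
\begin{displaymath} \Delta_{\mu,\nu}(I) = \left| \frac{\mu(I_{-})}{\mu(I)} - \frac{\nu(I_{-})}{\nu(I)} \right|. \end{displaymath}
Taking absolute values in the formula for $a_I$ therefore yields $|a_I| = \Delta_{\mu,\nu}(I)$ for every $I \in \calT \setminus \Leaves(\calT)$, which is precisely the asserted identity. The only thing one needs to check to make sense of the expression is that $\mu(I), \nu(I) > 0$ for $I \in \calT$, but this was noted at the beginning of the section as a consequence of hypotheses (A) and (B), so no further work is required.
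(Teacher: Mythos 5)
Your proposal is correct and matches the paper's (implicit) argument: the paper treats the claim as immediate by comparing the derived formula \eqref{form4} for $a_I$ with the definition \eqref{deltaNumbers} of $\Delta_{\mu,\nu}(I)$, exactly as you do. Nothing further is needed.
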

Now that the coefficients $a_{I}$ have been successfully defined for $I \in \calT \setminus \Leaves(\calT)$, let $g$ be the (at the moment) formal series
\begin{displaymath} g(x) := \sum_{I \in \calT \setminus \Leaves(\calT)} a_{I}h_{I}^{\mu}(x). \end{displaymath}
Since the Haar functions $h_{I}^{\mu}$ are orthogonal in $L^{2}(\mu)$, and satisfy
\begin{displaymath} \int (h_{I}^{\mu})^{2} \, d\mu \leq \max\{c_{I}^{+},c_{I}^{-}\}^{2}\mu(I) \leq D^{2}\mu(I), \quad I \in \calT \setminus \Leaves(\calT), \end{displaymath}
one arrives at
\begin{displaymath} \|g\|_{L^{2}(\mu)}^{2} = \sum_{I \in \calT \setminus \Leaves(\calT)} \Delta_{\mu,\nu}^{2}(I)\|h_{I}\|_{L^{2}(\mu)}^{2} \leq D^{2} \sum_{I \in \calT \setminus \Leaves(\calT)} \Delta_{\mu,\nu}^{2}(I)\mu(I) < \infty, \end{displaymath}
by the assumption in Proposition \ref{mainProp}. This means that the sequence
\begin{displaymath} g_{N} := \mathop{\sum_{I \in \calT \setminus \Leaves(\calT)}}_{|I| > 2^{-N}} a_{I}h_{I}^{\mu} \end{displaymath}
converges in $L^{2}(\mu)$. In particular, one can pick a subsequence $(g_{N_{j}})_{j \in \N}$, which converges pointwise $\mu$ almost everywhere (in fact, the entire sequence converges by basic martingale theory, but this is not needed). Now, recall that the goal was to prove that $\mu|_{\Top(\calT)} \ll \nu_{\calT}$. To this end, one has to verify that
\begin{equation}\label{form22} \liminf_{I \to x} \tfrac{\mu}{\nu_{\calT}}(I) < \infty \end{equation}
at $\mu$ almost every $x \in \Top(\calT)$. This is clear for $x \in \Leaves(\calT)$, since the ratios $\mu(I)/\nu_{\calT}(I)$, $I \ni x$, are eventually constant. So, it suffices to prove \eqref{form22} at $\mu$ almost every point $x \in \partial \calT$. Fix a point $x \in \partial \calT$ with the properties that sequence $(g_{N_{j}}(x))_{j \in \N}$ converges, and also
\begin{equation}\label{form23} \sum_{x \in J \in \calT} a_{J}^{2} = \sum_{x \in J \in \calT} \Delta_{\mu,\nu}^{2}(I) < \infty. \end{equation}
These properties hold at $\mu$ almost every $x \in \partial \calT$. Let $I \in \calD$ be so small that $x \in I \in \calT$, and note that
\begin{displaymath} \log \tfrac{\nu_{\calT}}{\mu}(I) = \log \tfrac{\nu}{\mu}(I) = \log \mathop{\prod_{J \supsetneq I}}_{J \in \calT} (1 + a_{J}h_{J}^{\mu}(x)) = \mathop{\sum_{J \supsetneq I}}_{J \in \calT} \log (1 + a_{J}h_{J}^{\mu}(x)). \end{displaymath}
Now, the plan is to use the estimate $\log(1 + t) \geq t - C_{\delta}t^{2}$, valid as long as $t \geq \delta - 1$ for some $\delta > 0$. Observe that $a_{J}h_{J}^{\mu}(x) \in \{-c_{J}^{-}a_{J},c_{J}^{+}a_{J}\}$, where
\begin{equation}\label{form39} -a_{J}c_{J}^{-} = \frac{\tfrac{\nu}{\mu}(J_{-})}{\tfrac{\nu}{\mu}(J)} - 1 \geq \frac{1}{C} - 1 \quad \text{and} \quad a_{J}c_{J}^{+} = \frac{\tfrac{\nu}{\mu}(J_{+})}{\tfrac{\nu}{\mu}(J)} - 1 \geq \frac{1}{C} - 1. \end{equation}
Consequently, for $x \in I \in \calT$ with $|I| = 2^{-N_{j}}$, one has
\begin{equation}\label{form40} \log \tfrac{\nu_{\calT}}{\mu}(I) \geq \mathop{\sum_{J \supsetneq I}}_{J \in \calT} a_{J}h_{J}^{\mu}(x) -  C'\mathop{\sum_{J \supsetneq I}}_{J \in \calT} (a_{J}h_{J}^{\mu}(x))^{2} \geq g_{N_{j}}(x) - C'D^{2}\sum_{x \in J \in \calT} a_{J}^{2}, \end{equation}
where $C' \lesssim_{D} 1$ only depends on the constant $C$ in \eqref{form39}. Since the sequence $(g_{N_{j}}(x))_{j \in \N}$ converges and \eqref{form23} holds, the right hand side of \eqref{form40} has a uniform lower bound $-M(x) > -\infty$. This implies that
\begin{displaymath} \limsup_{I \to x} \tfrac{\nu_{\calT}}{\mu}(I) \geq \exp(-M(x)) > 0, \end{displaymath}
which gives \eqref{form22} at $x$. The proof of Proposition \ref{mainProp} is complete.

\section{Proof of Theorem \ref{main}(b)}\label{mainProof} In this section, Theorem \ref{main}(b) is proved via a simple tree construction, coupled with Propositions \ref{deltaVsAlpha} and \ref{mainProp}. Recall the statement of Theorem \ref{main}(b):
\begin{thm}\label{main2} Assume that $\mu,\nu$ are Borel probability measures on $[0,1)$, $\mu$ does not charge the boundaries of dyadic intervals, and $\nu$ is dyadically doubling. Write $\mu = \mu_{a} + \mu_{s}$ for the Lebesgue decomposition of $\mu$ relative to $\nu$, and let $\calS_{\calD,\nu}(\mu)$ for the square function
\begin{displaymath} \calS^{2}_{\calD,\nu}(\mu) = \sum_{I \in \calD} \alpha_{\mu,\nu}^{2}(I)\chi_{I}. \end{displaymath}
Then, $\calS_{\calD,\nu}(\mu)$ is infinite $\mu_{s}$ almost surely.
\end{thm}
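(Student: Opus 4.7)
The plan is to argue by contradiction: I will assume $\mu_{s}(A) > 0$, where $A := \{\calS_{\calD,\nu}(\mu) < \infty\}$, and construct a single tree $\calT'$ satisfying the hypotheses of Propositions \ref{deltaVsAlpha} and \ref{mainProp} whose boundary carries some $\mu_{s}$ mass. Since Proposition \ref{mainProp} will then force $\mu|_{\partial \calT'} \ll \nu$, this will contradict $\mu_{s} \perp \nu$.

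Fix $M$ large enough that $\mu_{s}(A_{M}) > 0$, where $A_{M} := \{\calS_{\calD,\nu}^{2}(\mu) \leq M\}$, and a Borel set $F$ with $\mu_{s}(F) = \mu_{s}([0,1))$ and $\nu(F) = 0$. For a large constant $D = D(D_{\nu})$, the first step is to partition $\calD$ into disjoint trees $\{\calT_{J}\}_{J \in \calJ}$ on which $\mu$ is $(\calT_{J}, D)$-doubling, via the standard stopping rule: $I$ belongs to $\calT_{J}$ iff every strict ancestor $K$ of $I$ within $J$ satisfies $\min\{\mu(K_{-}),\mu(K_{+})\} \geq \mu(K)/D$, and the two children of each leaf become tops of new trees. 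The first key observation is that at any leaf $L$ of any $\calT_{J}$ in this decomposition, $\mu_{L}$ concentrates mass $> 1 - 1/D$ in one dyadic half of $[0,1]$, while $\nu_{L}$ assigns mass $\geq 1/D_{\nu}$ to each half by doubling of $\nu$. A direct test-function computation, comparing these measures against a small triangle bump supported in the "light" half for $\mu_{L}$ and using iterated dyadic doubling to bound $\nu_{L}$ from below on a fixed subinterval of this half, yields a uniform bound $\alpha_{\mu,\nu}(L) \geq c_{0} > 0$ at every leaf $L$, where $c_{0} = c_{0}(D,D_{\nu})$.

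Since membership in $A_{M}$ forces $\sum_{I \ni x} \alpha_{\mu,\nu}^{2}(I) \leq M$, no point of $A_{M}$ can descend through infinitely many leaves of the decomposition. Hence $\mu_{s}$-a.e.~point of $A_{M}$ lies in $\partial \calT_{J}$ for a unique $J \in \calJ$, and I may fix $J_{\ast}$ with $\mu_{s}(A_{M} \cap F \cap \partial \calT_{J_{\ast}}) > 0$. Combining this with the dyadic martingale theorem, which gives $\mu(K \cap A_{M})/\mu(K) \to 1$ as $K \to x$ for $\mu$-a.e.~$x \in A_{M}$, I choose $N$ large enough that $2^{-N} \leq |J_{\ast}|$ and a dyadic interval $I \in \calT_{J_{\ast}}$ of length $2^{-N}$ making
\begin{displaymath} E := \{x \in A_{M} \cap F \cap \partial \calT_{J_{\ast}} \cap I : \mu(K \cap A_{M}) \geq \tfrac{1}{2}\mu(K) \text{ for every dyadic } K \text{ with } x \in K \subseteq I\} \end{displaymath}
have positive $\mu_{s}$-measure. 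Then I form the sub-tree $\calT' \subseteq \calT_{J_{\ast}}$ with $\Top(\calT') = I$, including exactly those $K \in \calT_{J_{\ast}}$ with $K \subseteq I$ whose strict ancestors inside $I$ are all "$A_{M}$-heavy" in the sense $\mu(\cdot \cap A_{M}) \geq \tfrac{1}{2}\mu(\cdot)$. The tree $\calT'$ is coherent, inherits $(\calT',D)$-doubling from $\calT_{J_{\ast}}$, and contains $E$ inside $\partial \calT'$.

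Every non-leaf of $\calT'$ is itself $A_{M}$-heavy (since both its children are, by definition), so $\mu(K) \leq 2\mu(K \cap A_{M})$ for such $K$. Fubini then gives
\begin{displaymath} \sum_{K \in \calT' \setminus \Leaves(\calT')} \alpha_{\mu,\nu}^{2}(K)\mu(K) \leq 2 \int_{A_{M}} \calS_{\calD,\nu}^{2}(\mu)\, d\mu \leq 2M\mu(A_{M}) < \infty, \end{displaymath}
while the sum over leaves is controlled by $\mu(I)$ using $\alpha_{\mu,\nu} \leq 1$ and the disjointness of leaves. Applying Proposition \ref{deltaVsAlpha} produces $\sum_{K \in \calT'} \Delta_{\mu,\nu}^{2}(K)\mu(K) < \infty$, and then Proposition \ref{mainProp} yields $\mu|_{\partial \calT'} \ll \nu$. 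But $E \subseteq F \cap \partial \calT'$ with $\nu(F) = 0$ and $\mu_{s}(E) > 0$, a contradiction. The step I expect to be the most delicate is the uniform leaf bound $\alpha_{\mu,\nu}(L) \geq c_{0}$; everything else is a bookkeeping reduction that isolates an $A_{M}$-heavy sub-tree on which the Fubini bound for the $\alpha$-Carleson sum becomes available.
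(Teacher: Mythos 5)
Your proof is correct and reaches the same conclusion using the same three core ingredients as the paper (Lemma \ref{decayLemma}, Proposition \ref{deltaVsAlpha}, Proposition \ref{mainProp}), but it organises the tree decomposition differently, and the difference is worth noting. The paper's stopping rule \eqref{form25} declares $I$ a leaf when the cumulative sum $\sum_{I \subseteq J \subseteq \Top(\calT)} \alpha^{2}_{\mu,\nu}(J)$ first reaches $\epsilon^{2}$; this single criterion simultaneously produces the $(\calT,C)$-doubling of $\mu$ (because $\alpha_{\mu,\nu}(I) < \epsilon$ on non-leaves, so Lemma \ref{decayLemma} applies) and the Carleson bound $\sum_{I \in \calT \setminus \Leaves(\calT)} \alpha^{2}_{\mu,\nu}(I)\mu(I) \leq \epsilon^{2}\mu(\Top(\calT))$ via the one-line Fubini computation \eqref{form28}. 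You instead stop where the $\mu$-doubling first fails, so doubling is built in, but the $\alpha^{2}$-Carleson bound is not automatic: you recover it via the contrapositive of Lemma \ref{decayLemma} (yielding the uniform lower bound $\alpha_{\mu,\nu}(L) \geq c_{0}$ at every leaf, which is the part you flag as delicate and which is exactly what the lemma delivers once $D$ is taken larger than the lemma's constant $C$), plus a secondary ``$A_{M}$-heavy'' sub-tree construction driven by the dyadic Lebesgue density theorem. That second step is genuine extra work that the paper's choice of stopping avoids entirely; the paper also dispenses with the contradiction framing and the auxiliary set $F$ by directly proving $\mu|_{G} \ll \nu$ for $G = \{\calS_{\calD,\nu}(\mu) < \infty\}$. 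Your route is longer but isolates clearly which hypothesis feeds which conclusion, and the proof is sound as written, modulo the minor omission that tops $J$ with $\mu(J) = 0$ need to be handled separately (as the paper does) before the doubling stopping rule makes sense.
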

An equivalent statement is that the restriction of $\mu$ to the set
\begin{displaymath} G := \{x \in [0,1) : \calS_{\calD,\nu}(\mu)(x) < \infty\} \end{displaymath}
is absolutely continuous with respect to $\nu$; this is the formulation proven below. For the rest of the section, fix the measures $\mu,\nu$ as in the statement above, and let $D$ be the doubling constant of $\nu$. I record a simple lemma, which says that the doubling of $\nu$ implies the doubling of $\mu$ on intervals, where the $\alpha$-number is small enough.
 \begin{lemma}\label{decayLemma} There are constants $\epsilon > 0$ and $C \geq 1$, depending only on $D$, such that the following holds. For every interval $I \in \calD$, if $\alpha_{\mu,\nu}(I) < \epsilon$, then
\begin{equation}\label{decay} \mu(I) \leq C\min\{\mu(I_{-}),\mu(I_{+})\}. \end{equation} 
\end{lemma}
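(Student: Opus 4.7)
The plan is to test $\W_{1}(\mu_{I}, \nu_{I})$ against a suitable $1$-Lipschitz function that approximates $\delta \chi_{[0, 1/2]}$, apply the small-$\alpha$ hypothesis to transfer a lower bound on $\int \psi \, d\nu_{I}$ (which comes for free from the doubling of $\nu$) over to $\int \psi \, d\mu_{I}$, and then read off a lower bound for $\mu(I_{-})/\mu(I)$. The bound for $\mu(I_{+})/\mu(I)$ will follow by reflection, and the two together give \eqref{decay}.

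Concretely, I would fix $\delta = 2^{-k_{0}}$ with $k_{0}$ to be chosen in terms of $D$, and take the tent-shaped function
$$\psi_{\delta}(x) := \max\{0,\, \min\{x,\, \tfrac{1}{2} - x,\, \delta\}\} \text{ for } x \in [0, \tfrac{1}{2}], \qquad \psi_{\delta} \equiv 0 \text{ on } [\tfrac{1}{2}, 1].$$
This is $1$-Lipschitz, supported in $[0, \tfrac{1}{2}]$, and vanishes at the endpoints of $[0,1]$, so it is admissible in the definition of $\W_{1}$. Since $\psi_{\delta} \leq \delta \chi_{[0, 1/2]}$ and $\mu$ does not charge the midpoint of $I$, one gets $\int \psi_{\delta} \, d\mu_{I} \leq \delta \cdot \mu(I_{-})/\mu(I)$, while from $\psi_{\delta} \geq \delta \chi_{[\delta, 1/2 - \delta]}$ one gets $\int \psi_{\delta} \, d\nu_{I} \geq \delta \cdot \nu_{I}([\delta, \tfrac{1}{2} - \delta])$.

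Next I would invoke the doubling of $\nu$: the interval $[\delta, \tfrac{1}{2} - \delta]$ contains a dyadic subinterval of $[0,1]$ of generation $O(k_{0})$, and iterating the inequality $\nu_{I}(\hat{J}) \leq D \nu_{I}(J)$ (which transfers from $\nu$ via the affine conjugation by $T_{I}$) yields $\nu_{I}([\delta, \tfrac{1}{2} - \delta]) \geq c(D) > 0$ once $k_{0} = k_{0}(D)$ is fixed. Combining the three displays with $|\int \psi_{\delta} \, d\mu_{I} - \int \psi_{\delta} \, d\nu_{I}| \leq \alpha_{\mu,\nu}(I) < \epsilon$ gives
$$\delta \cdot \frac{\mu(I_{-})}{\mu(I)} \geq \delta \cdot c(D) - \epsilon.$$
Choosing $\epsilon := \delta \cdot c(D)/2$ forces $\mu(I_{-})/\mu(I) \geq c(D)/2$, and applying the same argument to the reflected test function $x \mapsto \psi_{\delta}(1 - x)$ produces the symmetric bound for $\mu(I_{+})/\mu(I)$, so \eqref{decay} holds with $C := 2/c(D)$.

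There is no substantial obstacle here. The only slightly nonobvious point is that the natural choice $\chi_{[0, 1/2]}$ is not itself $1$-Lipschitz, and any smoothing incurs a factor $\delta$ on \emph{both} sides of the comparison; these factors cancel, and the $\epsilon$-slack in the $\alpha$-hypothesis is exactly what permits the absorption, provided $\epsilon$ is taken small enough relative to $\delta$ and the doubling constant $D$.
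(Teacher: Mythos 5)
Your proposal is correct and is essentially the paper's own argument: both test $\W_{1}(\mu_{I},\nu_{I})$ against a Lipschitz bump concentrated well inside $I_{-}$, transfer a $\nu$-lower bound (from dyadic doubling) to $\mu$ at the cost of $O(\alpha_{\mu,\nu}(I))$, and then reflect to get the twin estimate for $I_{+}$. The only cosmetic difference is normalization -- the paper uses a height-$1$ bump of Lipschitz constant $C'/|I|$ on $I_{-}$ and rescales, whereas you use a $1$-Lipschitz trapezoid of height $\delta$ directly on $[0,1]$; the two are equivalent up to the factor $\delta \sim 1/C'$.
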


\begin{proof} Let $I_{--} \subset I_{-}$ and $I_{++} \subset I_{+}$ be intervals, which lie at distance $\geq |I|/8$ from the boundaries of $I_{-}$ and $I_{+}$, respectively, and have length $|I|/8$. Let $\psi_{-}$ and $\psi_{+} \colon \R \to [0,1]$ be $(C'/|I|)$-Lipschitz functions, which equal $1$ on $I_{--}$ and $I_{++}$, respectively, and are supported on $I_{-}$ and $I_{+}$. Then
\begin{displaymath} \frac{\mu(I_{-})}{\mu(I)} \geq \frac{1}{\mu(I)}\int \psi_{-} \, d\mu \geq \frac{1}{\nu(I)}\int \psi_{-} \, d\nu - C'\alpha_{\mu,\nu}(I) \geq \frac{\nu(I_{--})}{\nu(I)} - C'\alpha_{\mu,\nu}(I), \end{displaymath}
and the analogous inequality holds for $\mu(I_{+})/\mu(I)$. The ratio $\nu(I_{--})/\nu(I)$ is at least $1/D^{3}$, so if $\alpha_{\mu,\nu}(I) < 1/(2C'D^{3}) =: \epsilon$, then both $\mu(I_{-}) \geq [1/(2D^{3})]\mu(I)$ and $\mu(I_{+}) \geq [1/(2D^{3})]\mu(I)$. This gives \eqref{decay} with $C = 2D^{3}$. \end{proof}

In particular, if $\calT$ is a tree, and $\alpha_{\mu,\nu}(I) < \epsilon$ for all $I \in \calT \setminus \Leaves(\calT)$, then $\mu$ is $(\calT,C)$-doubling. I will now describe, how such trees $\calT_{j} \subset \calD$ are constructed, starting with $\calT_{0}$. Let $[0,1) = \Top(\calT_{0})$, and assume that some interval $I \in \calT_{0}$. If
\begin{equation}\label{form25} \sum_{I \subset J \subset [0,1)} \alpha_{\mu,\nu}^{2}(J) \geq \epsilon^{2}, \end{equation}
add $I$ to $\Leaves(\calT_{0})$. The children $I_{-}$ and $I_{+}$ become the tops of new trees. If \eqref{form25} fails, add $I_{-}$ and $I_{+}$ to $\calT_{0}$. The construction of $\calT_{0}$ is now complete. If a new top $T_{j}$ was created in the process of constructing $\calT_{0}$, and $\mu(T_{j}) > 0$, construct a new tree $\calT_{j}$ with $\Top(\calT_{j}) = T_{j}$ by repeating the algorithm above, only replacing $[0,1)$ by $T_{j}$ in the stopping criterion \eqref{form25}. Continue this process until all intervals in $\calD$ belong to some tree, or all remaining tops $T_{j}$ satisfy $\mu(T_{j}) = 0$. For all tops $T_{j}$ with $\mu(T_{j}) = 0$, simply define $\calT_{j} := \{I \in \calD : I \subset T_{j}\}$, so there is no further stopping inside $\calT_{j}$.

\begin{remark} Let $\calT$ be one of the trees constructed above, with $\mu(\Top(\calT)) > 0$. Then $\mu$ is $(\calT,C)$-doubling by Lemma \ref{decayLemma}, since it is clear that $\alpha_{\mu,\nu}(I) < \epsilon$ for all $I \in \calT \setminus \Leaves(\calT)$. In particular $\mu(I) > 0$ for all $I \in \calT$. \end{remark}

The following observation is now rather immediate from the definitions:

\begin{lemma}\label{topLemma} Assume that $\calT_{0},\ldots,\calT_{N - 1}$ are distinct trees such that $x \in \Leaves(\calT_{j})$ for all $0 \leq j \leq N - 1$. Then
\begin{displaymath} \calS^{2}_{\calD,\nu}(\mu)(x) \geq \epsilon^{2} N. \end{displaymath}
\end{lemma}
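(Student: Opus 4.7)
The plan is to exploit the observation that, by the construction in this section, the family $\{\calT_{j}\}_{j}$ partitions all of $\calD$. Consequently, for any $x \in [0,1)$ the square function decomposes cleanly as
\begin{displaymath} \calS_{\calD,\nu}^{2}(\mu)(x) = \sum_{j}\,\sum_{\substack{J \in \calT_{j} \\ x \in J}} \alpha_{\mu,\nu}^{2}(J), \end{displaymath}
so it will suffice to show that each of the $N$ trees $\calT_{0},\ldots,\calT_{N-1}$ listed in the hypothesis contributes at least $\epsilon^{2}$ to the right-hand side at $x$.

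To verify this, I would fix $j \in \{0,\ldots,N-1\}$ and let $I_{j} \in \Leaves(\calT_{j})$ be the (unique) leaf of $\calT_{j}$ containing $x$. The only way $I_{j}$ could have been declared a leaf during the construction of $\calT_{j}$ is by triggering the stopping criterion \eqref{form25}, read with $\Top(\calT_{j})$ in place of $[0,1)$; that is,
\begin{displaymath} \sum_{\substack{J \in \calT_{j} \\ I_{j} \subset J \subset \Top(\calT_{j})}} \alpha_{\mu,\nu}^{2}(J) \;\geq\; \epsilon^{2}. \end{displaymath}
Every $J$ appearing in this sum lies in $\calT_{j}$ and contains $x$ (since $x \in I_{j} \subset J$), so this is precisely a lower bound on the contribution of $\calT_{j}$ at $x$. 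Summing over $j = 0,\ldots,N-1$ immediately yields $\calS_{\calD,\nu}^{2}(\mu)(x) \geq \epsilon^{2} N$.

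There is essentially no serious obstacle here; the argument is just a careful bookkeeping of the stopping rule. The one caveat worth flagging is that the "degenerate" trees $\calT_{j}$ constructed over tops with $\mu(T_{j}) = 0$ have no leaves at all (they are filled out all the way down and contain no minimal intervals), and therefore can never appear among the $\calT_{0},\ldots,\calT_{N-1}$ in the hypothesis; so they can be quietly ignored and the partition argument above goes through.
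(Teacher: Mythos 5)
Your proof is correct and follows essentially the same route as the paper's: for each $j$, the leaf $I_j \ni x$ triggers the stopping rule $\sum_{I_j \subset J \subset \Top(\calT_j)} \alpha_{\mu,\nu}^2(J) \geq \epsilon^2$, all intervals in that sum contain $x$, and the tree partition of $\calD$ ensures the $N$ contributions are disjoint, so they add up in $\calS^2_{\calD,\nu}(\mu)(x)$. The only difference is that you spell out the disjointness (via the partition of $\calD$ into trees) and the irrelevance of the degenerate leafless trees, both of which the paper leaves implicit.
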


\begin{proof} For $0 \leq j \leq N - 1$, Let $I_{j} \in \Leaves(\calT_{j})$ with $x \in I_{j}$. Then
\begin{displaymath} \calS^{2}_{\calD,\nu}(\mu)(x) \geq \sum_{j = 0}^{N - 1} \sum_{I_{j} \subset J \subset \Top(\calT_{j})} \alpha_{\mu,\nu}^{2}(J) \geq \epsilon^{2} N, \end{displaymath}
as claimed. \end{proof}

It follows that $\mu$ almost every point in $G = \{x \in [0,1) : \calS_{\nu}(\mu)(x) < \infty\}$ belongs to $\Leaves(\calT_{j})$ for only finitely many trees $\calT_{j}$. This is equivalent to saying that $\mu$ almost every point in $G$ belongs to $\partial \calT$ for some tree $\calT$. The converse is also true: if $x$ belongs to $\partial \calT$ for some tree $\calT$, then clearly $\calS_{\nu}(\mu)(x) < \infty$. Consequently
\begin{displaymath} \mu|_{G} = \sum_{\text{trees }\calT} \mu|_{\partial \calT}. \end{displaymath}
To prove  Theorem \ref{main2}, it now suffices to show that $\mu|_{\partial \calT} \ll \nu$ for every tree $\calT$. This is clear, if $\mu(\Top(\calT)) = 0$, so I exclude the trivial case to begin with. In the opposite case, note that
\begin{equation}\label{form28} \sum_{I \in \calT \setminus \Leaves(\calT)} \alpha_{\mu,\nu}^{2}(I)\mu(I) = \int \sum_{I \in \calT \setminus \Leaves(\calT)} \alpha_{\mu,\nu}^{2}(I)\chi_{I}(x) \, d\mu x \leq \epsilon^{2} \cdot \mu(\Top(\calT)). \end{equation}
It then follows from Proposition \ref{deltaVsAlpha} that
\begin{displaymath} \sum_{I \in \calT} \Delta^{2}_{\mu,\nu}(I)\mu(I) \lesssim \mu(\Top(\calT)) < \infty, \end{displaymath}
and the claim $\mu|_{\partial \calT} \ll \nu$ is finally a consequence of Proposition \ref{mainProp}. The proof of Theorem \ref{main}(b) is complete.

\section{The non-dyadic square function}\label{continuousVariants}

This section contains the proof of Theorem \ref{mainCont}(b). The argument naturally contains many similarities to the one given above. The main novelty is that one needs to work with the smooth $\alpha$-numbers, introduced in Definition \ref{smoothAlphas} (or \cite[Section 5]{ADT}). 

\subsection{Smooth $\alpha$-numbers, and their properties} I recall the definition of the smooth $\alpha$-numbers:
\begin{definition}[Smooth $\alpha$-numbers] Write $\varphi(x) = \dist(x,\R \setminus (0,1))$. For an interval $I \subset \R$, define $\alpha_{s,\mu,\nu}(I) := \W_{1}(\mu_{\varphi,I},\nu_{\varphi,I})$, where 
\begin{displaymath} \mu_{\varphi,I} := \frac{T_{I\sharp}(\mu|_{I})}{\mu(\varphi_{I})} \quad \text{and} \quad  \nu_{\varphi,I} := \frac{T_{I\sharp}(\nu|_{I})}{\nu(\varphi_{I})}. \end{displaymath}
Here $\varphi_{I} = \varphi \circ T_{I}$, and $\mu(\varphi_{I}) = \int \varphi_{I} \, d\mu$. If $\mu(\varphi_{I}) = 0$ (or $\nu(\varphi_{I}) = 0$), set $\mu_{\varphi,I} \equiv 0$ (or $\nu_{\varphi,I} \equiv 0$). Unwrapping the definition, if $\mu(\varphi_{I}),\nu(\varphi_{I}) > 0$, then
\begin{displaymath} \alpha_{s,\mu,\nu}(I) = \sup_{\psi} \left| \frac{1}{\mu(\varphi_{I})} \int \psi \circ T_{I} \, d\mu - \frac{1}{\nu(\varphi_{I})} \int \psi \circ T_{I} \, d\nu \right| = \sup_{\psi} \left| \frac{\mu(\psi_{I})}{\mu(\phi_{I})} - \frac{\nu(\psi_{I})}{\nu(\phi_{I})} \right|, \end{displaymath}
where the $\sup$ is taken over test functions $\psi$.
\end{definition}

Recall that the main reason to prefer the smooth $\alpha$-numbers over the ones from Definition \ref{alphas} is the following stability property: if $I \subset J$ are intervals of comparable length, then $\alpha_{s,\mu,\nu}(I) \lesssim \alpha_{s,\mu,\nu}(J)$, whenever either $\mu$ or $\nu$ is doubling. This fact is essentially \cite[Lemma 5.2]{ADT2}, but I include a proof in Proposition \ref{basicProperties} for completeness. Similar stability is not true for the numbers $\alpha_{\mu,\nu}(I)$ and $\alpha_{\mu,\nu}(J)$, even for very nice measures $\mu$ and $\nu$, as the following example demonstrates:
\begin{ex}\label{counterEx} Fix $n \in \N$, and let $I^{n}_{-} := [\tfrac{1}{2} - 2^{-n},\tfrac{1}{2}]$ and $I^{n}_{+} := (\tfrac{1}{2},\tfrac{1}{2} + 2^{-n}]$. Let $\mu$ be the same measure as in Example \ref{exampleOne}:
\begin{displaymath} \mu = \chi_{\R \setminus (I^{n}_{-} \cup I^{n}_{+})} + \frac{\chi_{I^{n}_{-}}}{2} + \frac{3\chi_{I^{n}_{+}}}{2}. \end{displaymath}
Let $\nu = \calL$. It is clear that both $\mu$ and $\nu$ are doubling, with constants independent of $n$. It is also easy to check that $\alpha_{\mu,\nu}(I) \lesssim 2^{-2n}$ for any interval $I$ with length $|I| \sim 1$ such that $I_{-}^{n} \cup I_{+}^{n} \subset I$ (this implies that $\mu(I) = \nu(I)$). However, $\alpha_{\mu,\nu}([0,1/2]) \sim 2^{-n}$, because $\nu_{[0,1/2)} = \chi_{[0,1]}$, while 
\begin{displaymath} \mu_{[0,1/2]} = \left(1 + \frac{2^{-n}}{1 - 2^{-n}} \right) \chi_{[0,1 - 2^{1 - n})} + \frac{1}{2}\left(1 + \frac{2^{-n}}{1 - 2^{-n}} \right)\chi_{[1 - 2^{1 - n},1]}. \end{displaymath}
So, for instance, it is clear that no inequality of the form $\alpha_{\mu,\nu}([0,1/2]) \lesssim \alpha_{\mu,\nu}([-1,1])$ can hold. \end{ex}
Without any doubling assumptions, even the smooth $\alpha$-numbers can behave badly:
\begin{ex} Let $\mu = \delta_{1/2}$, and $\nu = (1 - \epsilon) \cdot \delta_{1/2 + \epsilon} + \epsilon \cdot \delta_{1/4}$. Then $\alpha_{s,\mu,\nu}([-1,1]) \sim \epsilon$, but $\alpha_{s,\mu,\nu}([0,1/2]) \sim 1$.\end{ex}

\begin{proposition}[Basic properties of the smooth $\alpha$-numbers]\label{basicProperties} Let $\mu,\nu$ be two Radon measures on $\R$, and let $I \subset \R$ be an interval. Then
\begin{displaymath} \alpha_{s,\mu,\nu}(I) \leq 2 \quad \text{and} \quad \alpha_{s,\mu,\nu}(I) \leq \frac{2\alpha_{\mu,\nu}(I)}{\nu_{I}(\varphi)}. \end{displaymath}
Moreover, if $\nu$ is doubling with constant $D$, the following holds. If $I \subset J \subset \R$ are intervals with $|I| \geq \theta|J|$ for some $\theta > 0$, then
\begin{equation}\label{comparability} \alpha_{s,\mu,\nu}(I) \lesssim_{D,\theta} \alpha_{s,\mu,\nu}(J). \end{equation}
\end{proposition}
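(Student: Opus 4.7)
\emph{Proof plan for Proposition \ref{basicProperties}.}

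For the first two bounds, the key pointwise fact is that every test function $\psi$ satisfies $|\psi|\leq\varphi$ on $[0,1]$: since $\psi$ is $1$-Lipschitz, vanishes outside $(0,1)$ and hence at the endpoints, one has $|\psi(x)|\leq \min\{x,1-x\}=\varphi(x)$. Precomposing with $T_{I}$ yields $|\psi_{I}|\leq \varphi_{I}$ pointwise, and in particular $|\mu(\psi_{I})|\leq \mu(\varphi_{I})$ and $|\nu(\psi_{I})|\leq \nu(\varphi_{I})$. The triangle inequality immediately gives $\alpha_{s,\mu,\nu}(I)\leq 2$. For the second bound, abbreviate $a:=\mu_{I}(\psi)$, $b:=\nu_{I}(\psi)$, $c:=\mu_{I}(\varphi)$, $d:=\nu_{I}(\varphi)$, so that $\mu_{\varphi,I}(\psi)-\nu_{\varphi,I}(\psi)=a/c-b/d$. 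Since both $\psi$ and $\varphi$ are admissible test functions in the definition of $\alpha_{\mu,\nu}(I)$, one has $|a-b|\leq \alpha_{\mu,\nu}(I)$ and $|c-d|\leq \alpha_{\mu,\nu}(I)$, while $|a|\leq c$ by the pointwise inequality above. The elementary identity
\begin{displaymath} \frac{a}{c}-\frac{b}{d}=\frac{c(a-b)+a(d-c)}{cd} \end{displaymath}
then yields $|a/c-b/d|\leq 2\alpha_{\mu,\nu}(I)/d$, which is the claim after taking $\sup$ over $\psi$.

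For the stability estimate \eqref{comparability}, the plan is to convert test functions for $I$ into test functions for $J$ by reparametrisation. Given a test function $\psi$ for $I$, set $\tilde{\psi}:=\psi\circ T_{I}\circ T_{J}^{-1}$. Then $\spt\tilde{\psi}\subset T_{J}(I)\subset [0,1]$ and $\tilde{\psi}$ has Lipschitz constant $|J|/|I|\leq 1/\theta$, so $\theta\tilde{\psi}$ is an admissible test function for $J$, with $(\theta\tilde{\psi})_{J}=\theta\psi_{I}$. Applying the definition of $\alpha_{s,\mu,\nu}(J)$ to this test function, and also to the particular choice $\psi=\varphi$ (which gives $\psi_{I}=\varphi_{I}$), one obtains the two estimates
\begin{displaymath} \left|\frac{\mu(\psi_{I})}{\mu(\varphi_{J})}-\frac{\nu(\psi_{I})}{\nu(\varphi_{J})}\right|\leq \frac{\alpha_{s,\mu,\nu}(J)}{\theta} \quad\text{and}\quad \left|\frac{\mu(\varphi_{I})}{\mu(\varphi_{J})}-\frac{\nu(\varphi_{I})}{\nu(\varphi_{J})}\right|\leq \frac{\alpha_{s,\mu,\nu}(J)}{\theta}. \end{displaymath}

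Now replay the algebraic identity from the previous paragraph with $A:=\mu(\psi_{I})/\mu(\varphi_{J})$, $B:=\nu(\psi_{I})/\nu(\varphi_{J})$, $C:=\mu(\varphi_{I})/\mu(\varphi_{J})$, $D':=\nu(\varphi_{I})/\nu(\varphi_{J})$. By the two displayed inequalities, $|A-B|$ and $|C-D'|$ are at most $\alpha_{s,\mu,\nu}(J)/\theta$; the bound $|A|\leq C$ (and $|B|\leq D'$) follows from $|\psi_{I}|\leq\varphi_{I}$. The identity gives
\begin{displaymath} \left|\frac{\mu(\psi_{I})}{\mu(\varphi_{I})}-\frac{\nu(\psi_{I})}{\nu(\varphi_{I})}\right| = \left|\frac{A}{C}-\frac{B}{D'}\right|\leq \frac{2\alpha_{s,\mu,\nu}(J)}{\theta D'}. \end{displaymath}
The point where the doubling of $\nu$ enters is the lower bound for $D'$: one has $\nu(\varphi_{K})\sim_{D}\nu(K)$ for any interval $K$ (the function $\varphi_{K}$ is $\geq 1/4$ on the central half of $K$, whose $\nu$-mass is comparable to $\nu(K)$ by doubling), and $\nu(J)\lesssim_{D,\theta}\nu(I)$ by iterating doubling $O(\log(1/\theta))$ times. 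Hence $D'\gtrsim_{D,\theta}1$, and taking $\sup$ over $\psi$ finishes the proof. The main subtlety is that $\mu$ is not assumed doubling, so the analogous ratio $\mu(\varphi_{I})/\mu(\varphi_{J})$ cannot be controlled; the argument sidesteps this by using the common auxiliary denominators $\mu(\varphi_{J})$ and $\nu(\varphi_{J})$, which cancel at the end via the algebraic identity. Degenerate cases (vanishing denominators) are handled by the convention that the corresponding measures are declared zero, making the relevant estimates vacuous.
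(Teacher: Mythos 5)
Your proof is correct and follows essentially the same route as the paper: the pointwise bound $|\psi|\leq\varphi$, the algebraic identity $a/c-b/d=(c(a-b)+a(d-c))/(cd)$ (which the paper applies in the equivalent form $a/c-b/d=(a-b)/d+(a/c)(d-c)/d$), the reparametrisation $\psi_{I}\circ T_{J}^{-1}$ producing $(|J|/|I|)$-Lipschitz functions supported on $T_{J}(I)\subset[0,1]$, and the doubling of $\nu$ to control $\nu(\varphi_{J})/\nu(\varphi_{I})$. The only difference is cosmetic: you spell out the lower bound $\nu(\varphi_{I})\gtrsim_{D}\nu(I)$ explicitly, while the paper leaves it implicit.
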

\begin{proof} For the duration of the proof, fix an interval $I \subset \R$ with $\mu(\varphi_{I}),\nu(\varphi_{I}) > 0$. The cases, where $\mu(\varphi_{I}) = 0$ or $\nu(\varphi_{I}) = 0$ always require a little case chase, which I omit. Recall that $\varphi = \chi_{[0,1]}\dist(\cdot,\{0,1\})$. Note that any $1$-Lipschitz function $\psi \colon \R \to \R$ supported on $[0,1]$ must satisfy $|\psi| \leq \varphi$. Consequently $|\psi_{I}| \leq \varphi_{I}$ for any interval $I$, and so
\begin{displaymath} \alpha_{s,\mu,\nu}(I) \leq \sup_{\psi} \left[ \frac{\mu(|\psi_{I}|)}{\mu(\varphi_{I})} + \frac{\nu(|\psi_{I}|)}{\nu(\varphi_{I})} \right] \leq 2. \end{displaymath}
This proves the first inequality. For the second inequality, one may assume that $\alpha_{\mu,\nu}(I) > 0$, since otherwise $\mu|_{\operatorname{int} I} = c\nu|_{\operatorname{int} I}$ for some constant $c > 0$, and this also gives $\alpha_{s,\mu,\nu}(I) = 0$. After this observation, it is easy to reduce to the case $\mu(\varphi_{I}) > 0$ and $\nu(\varphi_{I}) > 0$. Fix a test function $\psi$. Using  that $\mu_{I}(|\psi|) = \mu(|\psi_{I}|)/\mu(I) \leq  \mu(\varphi_{I})/\mu(I) = \mu_{I}(\varphi)$, one obtains
\begin{align*} \left|\frac{\mu(\psi_{I})}{\mu(\varphi_{I})} - \frac{\nu(\psi_{I})}{\nu(\varphi_{I})} \right| & = \left|\frac{\mu_{I}(\psi)}{\mu_{I}(\varphi)} - \frac{\nu_{I}(\psi)}{\nu_{I}(\varphi)} \right| = \left|\frac{\mu_{I}(\psi)\nu_{I}(\varphi) - \nu_{I}(\psi)\mu_{I}(\varphi)}{\mu_{I}(\varphi)\nu_{I}(\varphi)} \right|\\
& \leq \frac{\mu_{I}(|\psi|)}{\mu_{I}(\varphi)\nu_{I}(\varphi)}|\mu_{I}(\varphi) - \nu_{I}(\varphi)| + \frac{\mu_{I}(\varphi)}{\mu_{I}(\varphi)\nu_{I}(\varphi)}|\mu_{I}(\psi) - \nu_{I}(\psi)| \leq \frac{2\alpha_{\mu,\nu}(I)}{\nu_{I}(\varphi)}. \end{align*} 

To prove the final claim, start with the following estimate for a test function $\psi$:
\begin{align*} \left| \frac{\mu(\psi_{I})}{\mu(\varphi_{I})} - \frac{\nu(\psi_{I})}{\nu(\varphi_{I})} \right| \leq \frac{\nu(\varphi_{J})}{\nu(\varphi_{I})}\left| \frac{\mu(\psi_{I})}{\mu(\varphi_{J})} - \frac{\nu(\psi_{I})}{\nu(\varphi_{J})} \right| + \frac{\mu(|\psi_{I}|)}{\mu(\varphi_{I})} \frac{\nu(\varphi_{J})}{\nu(\varphi_{I})} \left|\frac{\mu(\varphi_{I})}{\mu(\varphi_{J})} - \frac{\nu(\varphi_{I})}{\nu(\varphi_{J})} \right|. \end{align*} 
Then, recall that $\mu(|\psi_{I}|) \leq \mu(\varphi_{I})$. Further, it follows from the doubling of $\nu$ that $\nu(\varphi_{J}) \lesssim_{D,\theta} \nu(\varphi_{I})$. Finally, notice that $\psi_{I} = (\psi_{I} \circ T_{J}^{-1}) \circ T_{J}$ and $\varphi_{I} = (\varphi_{I} \circ T_{J}^{-1}) \circ T_{J}$, where both
\begin{displaymath} \psi_{I} \circ T_{J}^{-1} \quad \text{and} \quad \varphi_{I} \circ T_{J}^{-1} \end{displaymath}
are $(|J|/|I|)$-Lipschitz functions supported on $T_{J}(I) \subset [0,1]$. Consequently, 
\begin{displaymath} \max\left\{\left| \frac{\mu(\psi_{I})}{\mu(\varphi_{J})} - \frac{\nu(\psi_{I})}{\nu(\varphi_{J})} \right|, \left|\frac{\mu(\varphi_{I})}{\mu(\varphi_{J})} - \frac{\nu(\varphi_{I})}{\nu(\varphi_{J})} \right| \right\} \leq \frac{\alpha_{s,\mu,\nu}(J)}{\theta}, \end{displaymath}
and the estimate \eqref{comparability} follows. \end{proof}

\subsection{Proof of Theorem \ref{mainCont}(b)}\label{SmoothSquareFunction} In this section, $\nu$ is a globally doubling measure with constant $D \geq 1$, say. As in Section \ref{mainProof}, it suffices to show that $\mu|_{G} \ll \nu$, where
\begin{displaymath} G := \{x : \calS_{\nu}(\mu)(x) < \infty\}. \end{displaymath}
Write
\begin{displaymath} \alpha_{s,\mu,\nu}(J) =: \alpha(J), \qquad J \subset \R. \end{displaymath}
Assume without loss of generality (or translate both measures $\mu$ and $\nu$ slightly) that $\mu(\partial I) = 0$ for all $I \in \calD$. Also without loss of generality, one may assume that $\spt \mu \subset (0,1)$: the reason is that the finiteness $\calS_{\nu}(\mu)(x)$ is equivalent to the finiteness of $\calS_{\nu}(\mu|_{U})(x)$ for all $x \in U$, whenever $U \subset \R$ is open. So, it suffices to prove $\mu|_{U \cap G} \ll \nu$ for any bounded open set $U$. Whenever I write $\calD$ in the sequel, I only mean the family $\{I \in \calD : I \subset [0,1)\}$.

I start with some standard discretisation arguments. For each $I \in \calD$, associate a somewhat larger interval $B_{I} \supset I$ as follows. First, for $x \in \spt \mu$ and $k \in \N$, choose a radius $r_{x,k} > 0$ such that
\begin{equation}\label{rXK} \alpha(B(x,r_{x,k})) \leq 2 \inf \{\alpha(B(x,r)) : 1.1 \cdot 2^{-k - 1} \leq r \leq 0.9 \cdot 2^{-k}\}. \end{equation}
Then
\begin{displaymath} \alpha^{2}(B(x,r_{x,k})) \leq \left( \frac{1}{\ln [2 \cdot (0.9/1.1)]} \int_{1.1 \cdot 2^{-k - 1}}^{0.9 \cdot 2^{-k}} 2\alpha(x,r) \, \frac{dr}{r} \right)^{2} \lesssim \int_{2^{-k - 1}}^{2^{-k}} \alpha^{2}(x,r) \, \frac{dr}{r}. \end{displaymath}
For $I \in \calD$ with $|I| = 2^{-k}$ and $I \cap \spt \mu \neq \emptyset$, let $B_{I}$ be some open interval of the form $B(x,r_{k - 10})$, $x \in I$, such that
\begin{displaymath} \alpha(B_{I}) \leq 2 \inf \{\alpha(B(y,r_{y,k - 10})) : y \in I \cap \spt \mu\}. \end{displaymath}
The number "$-10$" simply ensures that $I \subset B_{I}$ with $\dist(I,\partial B_{I}) \sim |I|$, and
\begin{displaymath} I \subset J \quad \Longrightarrow \quad B_{I} \subset B_{J}, \qquad \text{for } I,J \in \calD. \end{displaymath}
This implication also uses the slight separation between the scales, provided by the factors "$1.1$" and "$0.9$" in \eqref{rXK}. For $I \in \calD$ with $I \cap \spt \mu = \emptyset$, define $B_{I} := I$ (although this definition will never be really used). Now, a tree decomposition of $\calD$ can be performed as in the previous section, replacing the stopping condition \eqref{form25} by declaring $\Leaves(\calT)$ to consist of the maximal intervals $I \subset \Top(\calT)$ with
\begin{displaymath} \sum_{I \subset J \subset \Top(\calT)} \alpha^{2}(B_{I}) \geq \epsilon^{2}, \end{displaymath}
where $\epsilon = \epsilon_{D} > 0$ is a suitable small number; in particular, $\epsilon > 0$ is chosen so small that $\alpha(B_{I}) \leq \epsilon$ implies $\mu(B_{I}) \lesssim \mu(I)$ (which is possible by a small modification of Lemma \ref{decayLemma}). If now $x \in \Leaves(\calT)$ for infinitely many different trees $\calT$, then 
\begin{displaymath} \infty = \sum_{x \in I \in \calD} \alpha^{2}(B_{I}) \leq 2\sum_{k \in \N} \alpha^{2}(B(x,r_{x,k - 10})) \lesssim \int_{0}^{2^{10}} \alpha^{2}(B(x,r)) \, \frac{dr}{r}, \end{displaymath}
which implies that $x \notin G$. Repeating the argument from Section \ref{mainProof}, this gives
\begin{displaymath} \mu|_{G} \leq \sum_{\text{trees }\calT} \mu|_{\partial \calT}. \end{displaymath}
The converse inequality could also be deduced from the stability of the smooth $\alpha$-numbers (Proposition \ref{basicProperties}), but it is not needed: the inequality already shows that it suffices to prove
\begin{equation}\label{TreeAbsCont} \mu|_{\partial \calT} \ll \nu \end{equation}
for any given tree $\calT$. So, fix a tree $\calT$. If $\epsilon > 0$ was chosen small enough (again depending on $D$), then $\mu$ is $(\calT,C)$-doubling for some $C = C_{D} \geq 1$ in the usual sense:
\begin{displaymath} \mu(\hat{I}) \leq C\mu(I), \qquad I \in \calT \setminus \Top(\calT). \end{displaymath}
So, if one knew that
\begin{equation}\label{form27} \sum_{I \in \calT \setminus \Leaves(\calT)} \Delta_{\mu,\nu}^{2}(I)\mu(I) < \infty, \end{equation}
then the familiar Proposition \ref{mainProp} would imply \eqref{TreeAbsCont}, completing the entire proof.

The proof of \eqref{form27} is based on the following inequality:
\begin{equation}\label{form29} \sum_{I \in \calT} \Delta_{\mu,\nu}^{2}(I)\mu(I) \lesssim \sum_{I \in \calT \setminus \Leaves(\calT)} \alpha^{2}(B_{I})\mu(B_{I}) + \mu(\Top(\calT)). \end{equation}
The right hand side is finite by the same estimate as in \eqref{form28} (start with $\mu(B_{I}) \lesssim \mu(I)$, using $\alpha(B_{I}) \leq \epsilon$ for $I \in \calT \setminus \Leaves(\calT)$). So, \eqref{form29} implies \eqref{form27}. I start the proof of \eqref{form29} by noting that if $I \in \calD$, then
\begin{align}\label{comparisonIneq} \Delta_{\mu,\nu}(I) & = \left|\frac{\nu(I_{-})}{\nu(I)} - \frac{\mu(I_{-})}{\mu(I)}\right|\\
& \leq \frac{\nu(\varphi_{B_{I}})}{\nu(I)} \left|\frac{\nu(I_{-})}{\nu(\varphi_{B_{I}})} - \frac{\mu(I_{-})}{\mu(\varphi_{B_{I}})}\right|  + \frac{\mu(I_{-})}{\mu(I)}\frac{\nu(\varphi_{B_{I}})}{\nu(I)} \left|\frac{\mu(I)}{\mu(\varphi_{B_{I}})} - \frac{\nu(I)}{\nu(\varphi_{B_{I}})} \right|. \notag \end{align} 
Noting that $\nu(\varphi_{B_{I}})/\nu(I) \lesssim_{D} 1$, to prove \eqref{form29}, it suffices to control 
\begin{equation}\label{form30} \sum_{I \in \calT \setminus \Leaves(\calT)} \left[ \left|\frac{\nu(I_{-})}{\nu(\varphi_{B_{I}})} - \frac{\mu(I_{-})}{\mu(\varphi_{B_{I}})}\right|^{2} + \left|\frac{\mu(I)}{\mu(\varphi_{B_{I}})} - \frac{\nu(I)}{\nu(\varphi_{B_{I}})} \right|^{2}\right]\mu(I) \end{equation}
by the right hand side of \eqref{form29}. The main task it to find a suitable replacement for the "$\Tail-\Tip$" inequality \eqref{CalF+}, which I replicate here for comparison:
\begin{equation}\label{tipTail} \Delta_{\mu,\nu}(I)\mu(I) \leq \frac{C}{\tau} \sum_{J \in \Tail_{I}} \alpha_{\mu,\nu}(J)\mu(J) + \kappa \sum_{J \in \Tail_{I}} \Delta_{\mu,\nu}(J)\mu(J) + 2\mu(\Tip_{I}). \end{equation}
Glancing at \eqref{form30}, one sees that an analogue for the inequality above is actually needed for both the terms 
\begin{displaymath} \tilde{\Delta}_{B_{I}}(I_{-}) = \left|\frac{\nu(I_{-})}{\nu(\varphi_{B_{I}})} - \frac{\mu(I_{-})}{\mu(\varphi_{B_{I}})}\right| \quad \text{and} \quad \tilde{\Delta}_{B_{I}}(I) = \left|\frac{\mu(I)}{\mu(\varphi_{B_{I}})} - \frac{\nu(I)}{\nu(\varphi_{B_{I}})} \right|. \end{displaymath}
If $I_{-} \in \Leaves(\calT)$, then the trivial estimate $\tilde{\Delta}_{B_{I}}(I_{-}) \lesssim 1$ will suffice, so in the sequel I assume that 
\begin{equation}\label{excludedCase} I,I_{-} \notin \Leaves(\calT). \end{equation}
The goal is inequality \eqref{realTipTail} below. Fix $B_{I}$ and $J \in \{I,I_{-}\}$. Assume for notational convenience that $|B_{I}| = 1$, and hence, also $|J| \sim 1$. In a familiar manner, start by writing
\begin{equation}\label{chiJ} \chi_{J} = \sum_{k \in \Z} \psi_{k}, \end{equation}
where $\psi_{k}$ is a non-negative $C2^{|k|}$-Lipschitz function supported on either $J \subset B_{I}$ (for $k = 0$), or $J_{|k|-}$ (for negative $k$) or $J_{k+}$ (for positive $k$). As in the proof of the original $\Tail-\Tip$ inequality, it suffices to first estimate
\begin{equation}\label{form31} \left|\frac{1}{\mu(\varphi_{B_{I}})} \int \Psi^{+}_{0} \, d\mu - \frac{1}{\nu(\varphi_{B_{I}})} \int \Psi^{+}_{0} \, d\nu \right|, \end{equation}
where $\Psi^{+}_{0} = \sum_{k \geq 1} \psi_{k} + \psi_{0}/2$, and more generally $\Psi^{+}_{j} = \sum_{k \geq j} \psi_{j}$ for $j \geq 1$; eventually one can just replicate the argument for the function $\Psi_{0}^{-} = \sum_{k \leq -1} \psi_{k} + \psi_{0}/2$, and summing the bounds gives control for $\tilde{\Delta}_{B_{I}}(J)$. Start with the following estimate, which only uses the triangle inequality, and the fact that $\psi_{0}/2$ is a $C$-Lipschitz function supported on $B_{I}$:
\begin{align} &\left| \frac{1}{\mu(\varphi_{B_{I}})} \int \Psi^{+}_{0} \, d\mu - \frac{1}{\nu(\varphi_{B_{I}})} \int \Psi^{+}_{0} \, d\nu \right| \leq C\alpha(B_{I}) \notag \\
& \quad + \frac{\mu(\varphi_{B_{J_{+}}})}{\mu(\varphi_{B_{I}})} \left| \frac{1}{\mu(\varphi_{B_{J_{+}}})} \int \Psi^{+}_{1} \, d\mu - \frac{1}{\nu(\varphi_{B_{J_{+}}})} \int \Psi^{+}_{1} \, d\nu \right| \notag \\
&\label{form32} \quad + \left(\frac{1}{\nu(\varphi_{B_{J_{+}}})} \int \Psi_{1}^{+} \, d\nu \right) \left| \frac{\mu(\varphi_{B_{J_{+}}})}{\mu(\varphi_{B_{I}})} - \frac{\nu(\varphi_{B_{J_{+}}})}{\nu(\varphi_{B_{I}})} \right|. \end{align}
Here
\begin{displaymath} \frac{1}{\nu(\varphi_{B_{J_{+}}})} \int \Psi_{1}^{+} \, d\nu \lesssim 1, \end{displaymath}
since $\nu$ is doubling and $\Psi_{1}^{+}$ vanishes outside $J_{+} \subset B_{J_{+}}$, and
\begin{displaymath} \left| \frac{\mu(\varphi_{B_{J_{+}}})}{\mu(\varphi_{B_{I}})} - \frac{\nu(\varphi_{B_{J_{+}}})}{\nu(\varphi_{B_{I}})} \right| \leq \frac{|B_{I}|}{|B_{J_{+}}|} \cdot \alpha(B_{I}) \lesssim \alpha(B_{I}), \end{displaymath} 
since $\varphi_{B_{J_{+}}} = (\varphi_{B_{J_{+}}} \circ T_{B_{I}}^{-1}) \circ T_{B_{I}}$, where $\varphi_{B_{J_{+}}} \circ T_{B_{I}}^{-1}$ is a $(|B_{I}|/|B_{J_{+}}|)$-Lipschitz function supported on $[0,1]$. Consequently,
\begin{align*} & \left| \frac{1}{\mu(\varphi_{B_{I}})} \int \Psi^{+}_{0} \, d\mu - \frac{1}{\nu(\varphi_{B_{I}})} \int \Psi^{+}_{0} \, d\nu \right|\mu(\varphi_{B_{I}}) \leq C\alpha(B_{I})\mu(\varphi_{B_{I}})\\
& \quad + \left| \frac{1}{\mu(\varphi_{B_{J_{+}}})} \int \Psi^{+}_{1} \, d\mu - \frac{1}{\nu(\varphi_{B_{J_{+}}})} \int \Psi^{+}_{1} \, d\nu \right|\mu(\varphi_{B_{J_{+}}}) \end{align*}
Here $\Psi_{1}^{+}$ vanishes outside on $J_{+} \subset B_{J_{+}}$, so the estimate can be iterated. After $N \geq 0$ repetitions (the case $N = 0$ was seen above), one ends up with
\begin{align} & \left| \frac{1}{\mu(\varphi_{B_{I}})} \int \Psi^{+}_{0} \, d\mu - \frac{1}{\nu(\varphi_{B_{I}})} \int \Psi^{+}_{0} \, d\nu \right|\mu(\varphi_{B_{I}}) \leq C\sum_{k = 0}^{N} \alpha(B_{J_{k+}})\mu(\varphi_{B_{J_{k+}}})\notag \\
&\label{form33}\quad  + \mu(\varphi_{B_{J_{(N + 1)+}}})\left|\frac{1}{\mu(\varphi_{B_{(N + 1)+}})} \int \Psi^{+}_{N + 1} \, d\mu - \frac{1}{\nu(B_{(N + 1)+})} \int \Psi^{+}_{N + 1} \, d\nu \right|,  \end{align} 
where one needs to intepret $J_{0+} = I$ (which is different from $J$ in case $J = I_{-}$). What is a good choice for $N$? Let $N_{1} \geq 0$ be the smallest number such that $J_{(N_{1} + 1)+} \in \Leaves(\calT)$. If there is no such number, let $N_{1} = \infty$. In case $N_{1} = \infty$, the term on line \eqref{form33} vanishes, since $\mu(B_{J_{N+}})$ decays rapidly as long as $N \in \calT$ (using the doubling of $\nu$, and the fact that $\alpha(B_{I}) \leq \epsilon$ for $I \in \calT$). If $N_{1} < \infty$, the term on line \eqref{form33} is clearly bounded by $\leq 2\mu(B_{J_{(N_{1} + 1)+}})$, since $\Psi^{+}_{N_{1} + 1}$ vanishes outside $J_{(N_{1} + 1)+}$, which is well inside $B_{(N_{1} + 1)+}$. Observing that also $\mu(I) \lesssim \mu(\varphi_{B_{I}})$, it follows that
\begin{displaymath} \left| \frac{1}{\mu(\varphi_{B_{I}})} \int \Psi^{+}_{0} \, d\mu - \frac{1}{\nu(\varphi_{B_{I}})} \int \Psi^{+}_{0} \, d\nu \right|\mu(I) \lesssim \sum_{k = 0}^{N_{1}} \alpha(B_{J_{k+}})\mu(B_{J_{k+}}) + \mu(B_{J_{(N_{1} + 1)+}}). \end{displaymath}
Finally, by symmetry, the same argument can be carried out for the series $\Psi_{0}^{-} = \sum_{k < 0} \psi_{k} + \psi_{0}/2$. If $N_{2} \geq 0$ is the smallest number such that $J_{(N_{2} + 1)-} \in \Leaves(\calT)$, this leads to the following analogue of the $\Tail-\Tip$ inequality:
\begin{equation}\label{realTipTail} \tilde{\Delta}_{B_{I}}(J)\mu(I) \lesssim \sum_{P \in \Tail_{J}} \alpha(B_{P})\mu(B_{P}) + \mu(\Tip_{J}), \quad J \in \{I,I_{-}\}, I \in \calT \setminus \Leaves(\calT). \end{equation}
Here $\Tail_{J}$ is the collection of dyadic intervals $\Tail_{J} = \{J_{N_{2}-},\ldots,J,\ldots,J_{N_{1}+}\} \subset \calT \setminus \Leaves(\calT)$, and $\Tip_{J} = B_{J_{(N_{2} + 1)-}} \cup B_{J_{(N_{1} + 1)+}}$. Finally, in the excluded special case, where $J = I_{-} \in \Leaves(\calT)$ (recall \eqref{excludedCase}), the same estimate holds, if one defines $\Tail_{J} = \emptyset$ and $\Tip_{J} := J$ (noting that $I \in \calT$, so $\mu(I) \lesssim \mu(J)$).

Armed with the $\Tail-\Tip$ inequality \eqref{realTipTail}, the proof of the main estimate \eqref{form29} is a replica of the argument in the dyadic case, namely the proof of Proposition \ref{deltaVsAlpha}. I only sketch the details. For $I \in \calT \setminus \Leaves(\calT)$, and $J \in \{I,I_{-}\}$, start with 
\begin{align*} \tilde{\Delta}_{B_{I}}^{2}(J)\mu(I) & \lesssim \sum_{P \in \Tail_{J}} \alpha^{2}(B_{P}) \frac{\mu(B_{P})^{3/2}}{\mu(I)^{1/2}} + \frac{\mu(\Tip_{J})^{2}}{\mu(I)}\\
& \leq \mathop{\sum_{P \in \calT \setminus \Leaves(\calT)}}_{P \subset I} \alpha^{2}(B_{P}) \frac{\mu(B_{P})^{3/2}}{\mu(I)^{1/2}} + \frac{\mu(\Tip_{J})^{2}}{\mu(I)}. \end{align*}
The second inequality is trivial, and the first is proved with the same Cauchy-Schwarz argument as \eqref{form20}, using the fact that that $\sum_{P \in \Tail_{J}} \mu(B_{P})^{1/2} \lesssim \mu(I)^{1/2}$, which follows from $\Tail_{J} \subset \calT \setminus \Leaves(\calT)$, and in particular the geometric decay of the measures $\mu(B_{P})$ for $P \in \calT \setminus \Leaves(\calT)$. Now, the inequality above can be summed for $I \in \calT \setminus \Leaves(\calT)$ precisely as in the proof of \eqref{form21}. In particular, one should first use the estimate
\begin{displaymath} \mu(\Tip_{J}) \leq \mu(B_{J_{(N_{2} + 1)-}}) + \mu(B_{J_{(N_{1} + 1)+}}) \lesssim \mu(J_{(N_{2} + 1)-}) + \mu(J_{(N_{1} + 1)+}), \end{displaymath}
which follows from $\alpha(B_{J_{N_{1}+}}),\alpha(B_{J_{N_{2}-}}) < \epsilon$, if $\epsilon$ is small enough, depending on the doubling constant of $\nu$. The conclusion is
\begin{displaymath} \sum_{I \in \calT \setminus \Leaves(\calT)} \tilde{\Delta}_{B_{I}}^{2}(J)\mu(I) \lesssim \sum_{P \in \calT \setminus \Leaves(\calT)} \alpha^{2}(B_{P})\mu(B_{P}) + \mu(\Leaves(\calT)) \end{displaymath}
for $J \in \{I,I_{-}\}$. As observed in and around \eqref{form30}, this implies \eqref{form29}. 

\begin{remark}\label{AinftyRemark} In the proof of \eqref{form29}, the uniform bound $\alpha(B_{I}) < \epsilon$, $I \in \calT \setminus \Leaves(\calT)$, was only used to guarantee that $\mu$ is sufficiently doubling along, and inside, the balls $B_{I}$. If such properties are assumed \emph{a priori} in some given tree $\calT$, then \eqref{form29} continues to hold for $\calT$. In particular, if $\mu$ is doubling on the whole real line, and Carleson condition 
\begin{displaymath} \int_{B(x,2r)} \int_{0}^{2r} \alpha^{2}_{\mu,\nu}(B(y,t)) \, \frac{dt \, d\mu y}{t} \leq C\mu(B(x,r)), \end{displaymath}
holds, then the dyadic Carleson condition of Theorem \ref{mainCarleson} holds for any dyadic system $\calD$ (a family of half-open intervals covering $\R$, where every interval has length of the form $2^{-k}$ for some $k \in \Z$, and every interval is the union of two further intervals in the family; the proof of Theorem \ref{mainCarleson} seen in Section \ref{deltaAlphaComparison} works for any such system). It follows from this that $\mu \in A^{\calD}_{\infty}(\nu)$ for every dyadic system $\calD$, and consequently $\mu \in A_{\infty}(\nu)$. (To see this, pick a finite collection $\calD_{1},\ldots,\calD_{N}$ of dyadic systems so that the $\max$ of the corresponding dyadic maximal functions $M_{\nu}^{\calD_{i}}$,
\begin{displaymath} M_{\nu}^{\calD_{i}}f(x) = \sup_{x \in I \in \calD_{i}} \frac{1}{\nu(I)} \int_{I} |f| \, d\nu, \end{displaymath}
bounds the usual Hardy-Littlewood maximal function $M_{\nu}$, up to a constant depending only on the doubling of $\nu$. The construction of such systems is well-known, and in $\R$ as few as $2$ systems do the trick; for a reference, see for instance Section 5 in \cite{MTT}. Then, for every $1 \leq i \leq N$, there exists $p_{i} < \infty$ such that $\mu \in A_{p_{i}}^{\calD_{i}}(\nu)$, see \cite[Theorem 9.33(f)]{Gr}. In particular $\mu \in A_{p}^{\calD_{i}}(\nu)$ for $p := \max p_{i}$, and hence $\|M_{\nu}^{\calD_{i}}\|_{L^{p}(\mu) \to L^{p}(\mu)} < \infty$ for $1 \leq i \leq N$. It follows that $\|M_{\nu}\|_{L^{p}(\mu) \to L^{p}(\mu)} < \infty$, which is one possible definition for $\mu \in A_{\infty}(\nu)$. For much more information, see \cite[Section 9.11]{Gr}.) This proves the "continuous" part of Theorem \ref{mainCarleson}.
\end{remark}

\section{Parts (a) of the main theorems}\label{muASection}

Parts (a) of Theorems \ref{main} and \ref{mainCont} are proved in this section: $\mathcal{S}_{\calD,\nu}(\mu)$ and $\mathcal{S}_{\nu}(\mu)$ are finite $\mu_{a}$ almost everywhere, where $\mu_{a}$ is the absolutely continuous part of $\mu$ relative to $\nu$. The strategy is to prove the statement first for the dyadic square function $\mathcal{S}_{\calD,\nu}(\mu)$, but allow $\calD$ to be a slightly generalised system: a family $\calD = \cup \calD_{k}$, $k \geq 0$, of half-open intervals of length at most one such that
\begin{itemize}
\item[(D1)] each $\calD_{k}$ is a partition of $\R$,
\item[(D2)] each interval in $\calD_{k}$ has length $2^{-k}$, and 
\item[(D3)] each interval $I \in \calD_{k}$ has two children in $\calD_{k + 1}$, denoted by $\ch(I)$.
\end{itemize}
The added generality makes no difference in the proof, which closely follows previous arguments of Tolsa from \cite{To2} and \cite{To3}. The benefit is that the non-dyadic square function $\mathcal{S}_{\nu}(\mu)$ can, eventually, be bounded by a finite sum of dyadic square functions $\calS_{\calD_{1},\nu}(\mu),\ldots,\calS_{\calD_{N},\nu}(\mu)$, so the non-dyadic problem easily reduces to the dyadic one.

With the strategy in mind, fix a dyadic system $\calD$ satisfying (D1)-(D3), and let $\calS_{\calD,\nu}(\mu)$ be the associated square function. 

\begin{lemma}\label{lemma2} Assume that $\mu,\nu$ are Radon measures on $\R$, with $\mu$ finite, and $\nu$ dyadically doubling (relative to $\calD$). Then $\calS_{\nu}(\mu)$ is finite $\mu_{a}$ almost surely. \end{lemma}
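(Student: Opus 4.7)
The plan is to adapt Tolsa's argument from \cite{To2,To3}. Write $\mu = f\cdot\nu + \mu_{s}$ for the Lebesgue decomposition of $\mu$ relative to $\nu$, and set $\bar{f}_{I} := \mu(I)/\nu(I)$ and $f^{\nu}_{I} := \nu(I)^{-1}\int_{I} f\, d\nu$. The starting point is a pointwise estimate, which I would derive by a direct test-function computation based on the identity $\mu_{I} - \nu_{I} = \mu(I)^{-1} T_{I\sharp}[(f - \bar{f}_{I})\nu + \mu_{s}]\big|_{I}$, the inequality $|\psi| \leq 1/2$ for every admissible test function (which holds since $\psi$ is $1$-Lipschitz and vanishes at the endpoints of $[0,1]$), and the relation $|\bar{f}_{I} - f^{\nu}_{I}| = \mu_{s}(I)/\nu(I)$. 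The outcome is
\begin{displaymath}
\alpha_{\mu,\nu}(I) \leq \frac{1}{2\mu(I)}\int_{I} |f - f^{\nu}_{I}|\, d\nu + \frac{\mu_{s}(I)}{\mu(I)}.
\end{displaymath}

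Since $\mu_{a} \ll \nu$, I would aim for the stronger conclusion that $\calS_{\calD,\nu}(\mu) < \infty$ at $\nu$-a.e. $x$. By standard truncation (replacing $f$ by $f\cdot\chi_{\{f \leq N\}}$ and sending $N \to \infty$) and localisation to a bounded dyadic interval, I may assume $\mu$ finite and $f \in L^{\infty}(\nu)$. For $\nu$-a.e. $x$ with $f(x) > 0$, Lebesgue differentiation (valid because $\nu$ is dyadically doubling) yields $\bar{f}_{I_{k}(x)} \to f(x)$ and $\mu_{s}(I_{k}(x))/\nu(I_{k}(x)) \to 0$, so in particular $\mu(I_{k}(x)) \gtrsim f(x)\nu(I_{k}(x))$ for $k$ large, where $I_{k}(x)$ denotes the cube in $\calD_{k}$ containing $x$. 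Squaring the pointwise bound, applying Cauchy-Schwarz to the oscillation term, and summing along the dyadic chain through $x$ reduces the lemma to the two $\nu$-a.e. summability claims
\begin{displaymath}
\sum_{k} \frac{1}{\nu(I_{k}(x))}\int_{I_{k}(x)} (f - f^{\nu}_{I_{k}(x)})^{2}\, d\nu < \infty \quad \text{and} \quad \sum_{k} \left(\frac{\mu_{s}(I_{k}(x))}{\nu(I_{k}(x))}\right)^{2} < \infty.
\end{displaymath}

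The first sum is a conditional-$\nu$-variance of $f$ along the $x$-chain. I would control it by expanding $f$ in Haar functions adapted to $\nu$, interchanging the order of summation, and invoking the $\nu$-doubling hypothesis to bound the resulting Carleson-type inner sum; after truncation $f \in L^{\infty}(\nu)$, the estimate collapses to a classical dyadic martingale square function of $f$, which is $\nu$-a.e. finite. The second sum is the more delicate piece: the densities $\mu_{s}(I_{k}(x))/\nu(I_{k}(x))$ do tend to $0$ for $\nu$-a.e. $x$ by Lebesgue-Radon-Nikodym, but $\ell^{2}$-summability is strictly stronger. To extract it, I would follow Tolsa's Carleson / Borel-Cantelli strategy from \cite[Lemma 2.2]{To3}, exhausting $\mu_{s}$ by subsequent approximations supported in open sets of vanishing $\nu$-measure, and summing tail contributions using that $\nu$ is doubling. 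Once Lemma \ref{lemma2} is in hand, the non-dyadic part (Theorem \ref{mainCont}(a)) follows by dominating $\calS_{\nu}(\mu)$ pointwise by a finite sum of dyadic square functions $\calS_{\calD_{i},\nu}(\mu)$ over a standard family of shifted dyadic systems, invoking the stability of the smooth $\alpha$-numbers from Proposition \ref{basicProperties}.

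I expect the $\ell^{2}$-summability of the singular-density sum to be the main technical obstacle. Pointwise convergence $\mu_{s}(I_{k})/\nu(I_{k}) \to 0$ does not automatically upgrade to square-summability, and a naive Chebyshev / Borel-Cantelli estimate on $\nu(\{\mu_{s}(I)/\nu(I) > \lambda\}) \lesssim \mu_{s}(\R)/\lambda$ is too weak at the relevant scales. The resolution hinges on a quantitative, scale-sensitive version of mutual singularity, essentially importing the core of Tolsa's argument.
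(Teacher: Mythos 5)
Your pointwise estimate $\alpha_{\mu,\nu}(I) \leq \frac{1}{2\mu(I)}\int_{I}|f-f^{\nu}_{I}|\,d\nu + \mu_{s}(I)/\mu(I)$ only uses the crude bound $|\psi| \leq 1/2$, and this throws away the Lipschitz regularity of the test functions, which is exactly where the gain lives. The paper's Lemma \ref{lemma1} keeps it: after expanding in $\nu$-adapted martingale differences, the mean-zero of $\Delta_{J}^{\nu}g$ lets one subtract $\psi\circ T_{I}(x_{J})$ and use $1$-Lipschitzness to pick up the decisive factor $\ell(T_{I}(J)) = \ell(J)/\ell(I)$, which makes the inner sum over $J \subset I$ geometrically convergent. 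Your bound has no such factor, and the oscillation sum you are reduced to, $\sum_{k}\nu(I_{k}(x))^{-1}\int_{I_{k}(x)}|f-f^{\nu}_{I_{k}(x)}|^{2}\,d\nu$, is \emph{not} $\nu$-a.e.\ finite even for bounded $f$. Indeed, in terms of normalized $\nu$-Haar coefficients $a_{J}$, this sum equals $\sum_{m}\sum_{j>m}\epsilon_{j}^{2}$ when $a_{J}^{2} = \nu(J)\epsilon_{j}^{2}$ for all $J$ at level $j$, and one can choose $\epsilon_{j}$ supported on the lacunary scales $j = 2^{k}$ with $\epsilon_{2^{k}} = k^{-2}$; then $f \in L^{\infty}\cap L^{2}$, yet the oscillation sum is $\gtrsim \sum_{k} 2^{k}k^{-3} = \infty$ at every $x$. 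So your reduction is vacuous already for $\mu_{s}=0$, i.e.\ before the issue you flagged arises. (Your truncation step also deserves scrutiny: replacing $f$ by $f\chi_{\{f\leq N\}}$ changes $\mu$, and the $\alpha$-numbers of the truncated measure do not control those of the original in any obvious monotone way.)

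The acknowledged gap -- $\ell^{2}$-summability of $\mu_{s}(I_{k}(x))/\nu(I_{k}(x))$ for $\nu$-a.e.\ $x$ -- is also a genuine obstruction rather than a technicality. The densities $M_{k}(x) = \mu_{s}(I_{k}(x))/\nu(I_{k}(x))$ form a nonnegative $\nu$-martingale with $M_{k}\to 0$ a.e., but nothing forces square-summability of the decay; and since you need this for arbitrary finite singular $\mu_{s}$, no restriction to $\{f>0\}$ saves you. The paper sidesteps both problems by a different decomposition: a Calder\'on--Zygmund split $\mu = g + b$ at level $\lambda$, where $g$ is bounded by $\lambda$ (so Lemma \ref{lemma1} applies with the Lipschitz gain intact), and the bad part $b = \sum_{J\in\calB} b_{J}$ has $\nu$-mean zero on each $\calB$-interval $J$. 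That mean-zero structure is what allows the Lipschitz argument to convert $|\int(\psi\circ T_{I})\,db_{J}|$ into the convergent factor $\ell(J)/\ell(I)$, and the final $b$-contribution $S^{2}(x)$ is then shown $\mu$-a.e.\ finite by integrating against $\mu$ (not $\nu$) and using disjointness of $\calB$, together with $\sum_{J\in\calB}\mu(J)\leq\|\mu\|$. Your Lebesgue decomposition produces a singular part with no adapted mean-zero structure, so neither of these cancellations is available. I'd suggest abandoning the $\mu = f\nu + \mu_{s}$ split and instead running the C--Z argument.
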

The proof of Lemma \ref{lemma2} is a combination of two arguments of Tolsa: the proofs of \cite[Theorem 1.1]{To2} and \cite[Lemma 2.2]{To3}. I start with an analogue of \cite[Theorem 1.1]{To2}:

\begin{lemma}\label{lemma1} Assume that $\mu \in L^{2}(\nu)$. Then
\begin{displaymath} \mathop{\sum_{I \in \calD}}_{\nu(I) > 0} \alpha_{\mu,\nu}^{2}(I)\frac{\mu(I)^{2}}{\nu(I)} \lesssim \|\mu\|_{L^{2}(\nu)}^{2}. \end{displaymath}
\end{lemma}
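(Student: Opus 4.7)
The plan is to write $\mu = f\,d\nu$ with $f \in L^{2}(\nu)$ and reduce the estimate to a Parseval-type bound using the $\nu$-adapted Haar basis. Unwinding the definitions and using $d\mu = f\,d\nu$, the identity
\begin{displaymath}
\alpha_{\mu,\nu}(I)\mu(I) = \sup_{\phi} \Big| \int \phi\,(f - \bar{f}_I) \, d\nu \Big|,
\end{displaymath}
with $\bar{f}_I := \mu(I)/\nu(I)$, holds whenever $\nu(I),\mu(I) > 0$; here $\phi = \psi \circ T_I$ ranges over $|I|^{-1}$-Lipschitz functions supported on $\overline{I}$. The case $\mu(I) = 0$ contributes nothing.

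Next, I would expand $f|_I - \bar{f}_I$ in the $L^{2}(\nu)$-normalised $\nu$-adapted Haar basis $\{\hat{h}_J : J \in \calD,\,J \subseteq I\}$, so that $f - \bar{f}_I = \sum_{J \subseteq I} \hat{c}_J \hat{h}_J$ on $I$, where $\hat{c}_J := \int f\,\hat{h}_J\,d\nu$ depends only on $J$ (not on $I$) and satisfies the global Bessel inequality $\sum_J \hat{c}_J^{2} \leq \|f\|_{L^{2}(\nu)}^{2}$. The crucial per-$J$ bound combines the mean-zero property of $\hat{h}_J$ with the $|I|^{-1}$-Lipschitz regularity of $\phi$: subtracting off the value of $\phi$ at the midpoint of $J$ and applying Cauchy-Schwarz against $\|\hat{h}_J\|_{L^{2}(\nu)} = 1$,
\begin{displaymath}
\Big| \int \phi\,\hat{h}_J\,d\nu \Big| \lesssim \frac{|J|}{|I|} \int |\hat{h}_J|\,d\nu \leq \frac{|J|}{|I|}\sqrt{\nu(J)}.
\end{displaymath}
Summing over $J$ and taking the supremum over $\phi$ gives the master inequality $\alpha_{\mu,\nu}(I)\mu(I) \lesssim \sum_{J \subseteq I} |\hat{c}_J|\,(|J|/|I|)\sqrt{\nu(J)}$.

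The last step is a weighted Cauchy-Schwarz with weights $z_J := |J|/|I|$, which distributes the Lipschitz decay evenly:
\begin{displaymath}
\alpha_{\mu,\nu}(I)^{2}\mu(I)^{2} \lesssim \Big(\sum_{J \subseteq I} \hat{c}_J^{2}\,\tfrac{|J|}{|I|}\Big)\Big(\sum_{J \subseteq I} \tfrac{|J|}{|I|}\nu(J)\Big) \lesssim \nu(I)\sum_{J \subseteq I}\hat{c}_J^{2}\,\tfrac{|J|}{|I|},
\end{displaymath}
the second factor being the geometric sum $\sum_{k \geq 0} 2^{-k}\nu(I)$. Dividing by $\nu(I)$ and swapping the order of summation,
\begin{displaymath}
\sum_{I}\alpha_{\mu,\nu}^{2}(I)\tfrac{\mu(I)^{2}}{\nu(I)} \lesssim \sum_J \hat{c}_J^{2} \sum_{I \supseteq J,\,|I| \leq 1}\tfrac{|J|}{|I|} \lesssim \sum_J \hat{c}_J^{2} \leq \|f\|_{L^{2}(\nu)}^{2},
\end{displaymath}
since the ancestor sum is again geometric. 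The main obstacle, and the reason the argument is delicate, is precisely this scale-summability: a naive Cauchy-Schwarz on the $J$-sum discards the factor $|J|/|I|$, after which the outer sum over ancestors $I \supseteq J$ diverges. The geometric gain coming from the Lipschitz oscillation of $\phi$ must therefore be preserved and split carefully between the two Cauchy-Schwarz factors, so that the inner sum reassembles into $\nu(I)$ while the ancestor sum remains bounded.
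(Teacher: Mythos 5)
Your argument is correct and is essentially the paper's proof, with only a cosmetic difference: you phrase the decomposition in terms of $L^{2}(\nu)$-normalised Haar functions $\hat{h}_{J}$ and Bessel, whereas the paper works with the unnormalised martingale differences $\Delta_{J}^{\nu}g$ and the orthogonality bound $\sum_{J}\|\Delta_{J}^{\nu}g\|_{L^{2}(\nu)}^{2} \leq \|g\|_{L^{2}(\nu)}^{2}$, but the per-$J$ estimate from mean-zero plus Lipschitz, the weighted Cauchy--Schwarz splitting the geometric factor $|J|/|I|$ across the two factors so that $\sum_{J\subseteq I}(|J|/|I|)\nu(J) \lesssim \nu(I)$, and the final interchange of the $I$- and $J$-sums are all identical.
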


\begin{proof} It suffices to sum over the intervals $I \subset \calD$ with $\mu(I) > 0$ and $\nu(I) > 0$; fix one of these $I$, and a $1$-Lipschitz function $\psi \colon \R \to \R$, supported on $[0,1]$. Then, write
\begin{equation}\label{form35} \left| \int \psi \, d\mu_{I} - \int \psi \, d\nu_{I} \right| = \left| \frac{1}{\mu(I)} \int_{I} (\psi \circ T_{I})g  d\nu - \frac{1}{\nu(I)} \int_{I} (\psi \circ T_{I}) \, d\nu\right|,\end{equation}
where $g$ is the Radon-Nikodym derivative $d\mu/d\nu \in L^{2}(\nu)$. Express $g\chi_{I}$ in terms of standard ($\nu$-adapted) martingale differences:
\begin{equation}\label{form41} g \chi_{I} = \langle g \rangle^{\nu}_{I}\chi_{I} + \sum_{J \in \calD(I)} \Delta_{J}^{\nu} g, \end{equation}
where $\calD(I) := \{J \in \calD : J \subset I\}$, the sum converges in $L^{2}(\nu)$, and
\begin{displaymath} \langle g \rangle^{\nu}_{I} = \frac{1}{\nu(I)} \int g \, d\nu = \frac{\mu(I)}{\nu(I)} \quad \text{and} \quad \Delta_{J}^{\nu}g = -\langle g \rangle_{J}^{\nu}\chi_{J} + \sum_{J' \in \ch(J)} \langle g \rangle_{J'}^{\nu}\chi_{J'}. \end{displaymath}
Note that $\Delta_{J}^{\nu}g$ is supported on $J$ and has $\nu$-mean zero. By \eqref{form41},
\begin{equation}\label{form34} \frac{1}{\mu(I)} \int_{J} (\psi \circ T_{I})g \, d\nu = \frac{1}{\nu(I)} \int_{I} (\psi \circ T_{I}) \, d\nu + \sum_{J \in \calD(I)} \frac{1}{\mu(I)}\int_{J} (\psi \circ T_{I})\Delta_{J}^{\nu}g \, d\nu. \end{equation}
Since the first term on the right hand side of \eqref{form34} cancels out the last term in \eqref{form35}, one can continue as follows:
\begin{align*} \eqref{form35} & \leq \sum_{J \in \calD(I)} \frac{1}{\mu(I)}\left| \int_{J} (\psi \circ T_{I})\Delta_{J}^{\nu} g \, d\nu \right|\\
& = \sum_{J \in \calD(I)} \frac{1}{\mu(I)}\left| \int_{J} [(\psi \circ T_{I}) - (\psi \circ T_{I}(x_{J}))] \Delta_{J}^{\nu} g \, d\nu \right|. \end{align*}
Above, $x_{J}$ is the midpoint of $J$, and the mean zero property of $\Delta_{J}^{\nu}g$ was used. Finally, recalling that $\psi$ is $1$-Lipschitz, one obtains
\begin{displaymath} \eqref{form35} \leq \sum_{J \in \calD(I)} \frac{\ell(T_{I}(J))}{\mu(I)} \|\Delta_{J}^{\nu}g\|_{L^{1}(\nu)} \leq \sum_{J \in \calD(I)} \frac{\ell(J)\nu(J)^{1/2}}{\mu(I)\ell(I)}\|\Delta_{J}^{\nu} g\|_{L^{2}(\nu)}. \end{displaymath}
Taking a $\sup$ over admissible functions $\psi \colon \R \to \R$ gives
\begin{equation}\label{form36} \alpha_{\mu,\nu}(I) \leq \sum_{J \in \calD(I)} \frac{\ell(J)\nu(J)^{1/2}}{\mu(I)\ell(I)}\|\Delta_{J}^{\nu}g\|_{L^{2}(\nu)}. \end{equation}
Now, using \eqref{form36} and Cauchy-Schwarz, we may sum over $I \in \calD$ as follows (we suppress the requirement $\nu(I) > 0$ from the notation):
\begin{align*} \sum_{I \in \calD} \alpha^{2}_{\mu,\nu}(I)\frac{\mu(I)^{2}}{\nu(I)} & \leq \sum_{J \in \calD} \left(\sum_{J \in \calD(I)} \frac{\ell(J)\nu(J)^{1/2}}{\ell(I)} \|\Delta_{J}^{\nu} g\|_{L^{2}(\nu)} \right)^{2}\frac{1}{\nu(I)}\\
& \leq \sum_{I \in \calD} \left(\sum_{J \in \calD(I)} \frac{\ell(J)}{\ell(I)} \|\Delta_{J}^{\nu} g\|_{L^{2}(\nu)}^{2} \right) \sum_{J \in \calD(I)} \frac{\ell(J)\nu(J)}{\ell(I)\nu(I)}. \end{align*} 
Clearly,
\begin{displaymath}  \sum_{J \in \calD(I)} \frac{\ell(J)\nu(J)}{\ell(I)\nu(I)} \lesssim 1, \end{displaymath}
so
\begin{displaymath} \sum_{J \in \calD} \alpha_{\mu,\nu}(I)^{2}\frac{\mu(I)^{2}}{\nu(I)} \lesssim \sum_{J \in \calD} \|\Delta_{J}^{\nu}g\|_{L^{2}(\nu)}^{2} \sum_{I \supset J} \frac{\ell(J)}{\ell(I)} \lesssim \sum_{J \in \calD} \|\Delta_{J}^{\nu}g\|_{L^{2}(\nu)}^{2} \leq \|g\|_{L^{2}(\nu)}^{2}, \end{displaymath}
as claimed. \end{proof}

\begin{cor}\label{cor1} If $\mu \in L^{2}(\nu)$, then $\calS_{\calD,\nu}(\mu)$ is finite $\mu$ almost everywhere. \end{cor}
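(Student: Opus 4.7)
The plan is to convert the summability provided by Lemma \ref{lemma1} into pointwise $\mu$-almost everywhere finiteness by exploiting the fact that the Radon-Nikodym derivative $d\mu/d\nu$ is positive $\mu$-a.e. Write $\mu = g \, d\nu$ with $g \in L^{2}(\nu)$; then $\mu \ll \nu$ and $g(x) > 0$ for $\mu$-a.e.\ $x$, so it suffices to verify $\calS_{\calD,\nu}(\mu)(x) < \infty$ for $\mu$-a.e.\ $x$ in $\{g > 0\}$.

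The first key step is a Lebesgue-differentiation argument for the doubling measure $\nu$: the $\nu$-adapted martingale $\mu(I_{k}(x))/\nu(I_{k}(x))$ converges to $g(x)$ for $\nu$-a.e.\ (hence $\mu$-a.e.) $x$, where $I_{k}(x)$ denotes the unique element of $\calD_{k}$ containing $x$. Thus, for $\mu$-a.e.\ $x$ there is a finite $K(x)$ with $\mu(I_{k}(x))/\nu(I_{k}(x)) \geq g(x)/2$ whenever $k \geq K(x)$. Recording that $\alpha_{\mu,\nu}(I) \leq 1$ (any $1$-Lipschitz test function vanishing outside $[0,1]$ is bounded by $1/2$), I would then estimate
\begin{displaymath} \calS^{2}_{\calD,\nu}(\mu)(x) \leq K(x) + \frac{2}{g(x)} \sum_{k \geq 0} \alpha^{2}_{\mu,\nu}(I_{k}(x)) \frac{\mu(I_{k}(x))}{\nu(I_{k}(x))}. \end{displaymath}

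To conclude, I would integrate the last sum against $\mu$ and interchange the order of summation:
\begin{displaymath} \int \sum_{k \geq 0} \alpha^{2}_{\mu,\nu}(I_{k}(x)) \frac{\mu(I_{k}(x))}{\nu(I_{k}(x))} \, d\mu(x) = \sum_{I \in \calD} \alpha^{2}_{\mu,\nu}(I) \frac{\mu(I)^{2}}{\nu(I)} \lesssim \|\mu\|^{2}_{L^{2}(\nu)} < \infty, \end{displaymath}
the final inequality being exactly Lemma \ref{lemma1}. Hence the remaining sum is finite at $\mu$-a.e.\ $x$; combining this with the $\mu$-a.e.\ finiteness of $K(x)$ and positivity of $g(x)$ gives $\calS_{\calD,\nu}(\mu)(x) < \infty$ for $\mu$-a.e.\ $x$.

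The only non-routine point is the gap between a Carleson-type estimate (which controls $\sum_{I} \alpha^{2}(I) \mu(I)^{2}/\nu(I)$) and the a.e.\ finiteness of $\sum_{k} \alpha^{2}(I_{k}(x))$, since a bare Fubini would require replacing $\mu(I)^{2}/\nu(I)$ by $\mu(I)$, producing the unbounded factor $\nu(I)/\mu(I)$. The Lebesgue-differentiation step is precisely what tames this factor at small scales, replacing the varying density $\mu(I_{k}(x))/\nu(I_{k}(x))$ with the pointwise positive constant $g(x)/2$.
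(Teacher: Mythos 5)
Your argument is correct and is essentially the paper's proof, just written out in more detail: both use Lemma \ref{lemma1} together with Fubini to obtain $\sum_{x\in I\in\calD}\alpha_{\mu,\nu}^{2}(I)\,\mu(I)/\nu(I)<\infty$ for $\mu$-a.e.\ $x$, and then invoke Lebesgue differentiation to replace the density $\mu(I)/\nu(I)$ by the strictly positive pointwise value $g(x)$. Your truncation at $K(x)$ and the bound $\alpha_{\mu,\nu}(I)\leq 1$ are harmless bookkeeping that the paper leaves implicit.
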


\begin{proof} By Lemma \ref{lemma1}, and the Lebesgue differentiation theorem, the following conditions hold $\mu$ almost everywhere:
\begin{displaymath} \sum_{x \in I \in \calD} \alpha_{\mu,\nu}(I)^{2}\frac{\mu(I)}{\nu(I)} < \infty \quad \text{and} \quad \exists \lim_{I \to x} \frac{\mu(I)}{\nu(I)} = \mu(x) > 0.  \end{displaymath}
Clearly $\calS_{\calD,\nu}(\mu)(x) < \infty$ for such $x \in [0,1)$. \end{proof}

Now, we can prove Lemma \ref{lemma2} by an argument similar to \cite[Lemma 2.2]{To3}:

\begin{proof}[Proof of Lemma \ref{lemma2}] Perform a Calder\'on-Zygmund decomposition of $\mu$ with respect to $\nu$, at some level $\lambda \geq 1$. More precisely, let $\calB$ be the family of maximal intervals $I \in \calD$ with $\mu(I) > \lambda \nu(I)$, and set $\mu = g + b$, where 
\begin{displaymath} g = \mu|_{G} + \sum_{I \in \calB} \frac{\mu(I)}{\nu(I)}\nu|_{I}, \qquad G := [0,1) \setminus \bigcup_{I \in \calB} I, \end{displaymath}
and
\begin{displaymath} b = \sum_{I \in \calB} \left[\mu|_{I} - \frac{\mu(I)}{\nu(I)}\nu|_{I} \right] =: \sum_{I \in \calB} b_{I}. \end{displaymath}
Then $\|g\|_{L^{\infty}(\nu)} \lesssim \lambda$ (the implicit constants depend on the doubling of $\nu$), and
\begin{displaymath} \nu([0,1) \setminus G) = \sum_{I \in \calB} \nu(I) < \frac{1}{\lambda} \sum_{I \in \calB} \mu(I) \leq \frac{1}{\lambda}. \end{displaymath}
Since $\mu_{a} \in L^{1}(\nu)$ (recall that $\mu$ is a finite measure), it follows that $\mu_{a}([0,1) \setminus G) \to 0$ as $\lambda \to \infty$. Hence, it suffices to show that
\begin{displaymath} \calS_{\calD,\nu}(\mu)(x) < \infty \text{ for $\mu$ almost every } x \in G \cap \spt_{\calD} \mu, \end{displaymath}
where $\spt_{\calD} \mu = \{x \in \R : \mu(I) > 0 \text{ for all } x \in I \in \calD\}$. Let $\calG \subset \calD$ be the intervals, which are not contained in any interval in $\calB$. Fix $x \in G \cap \spt_{\calD} \mu$, and note that if $x \in I \in \calD$, then $I \in \calG$. Observe that $\mu(I) = g(I)$ for $I \in \calG$, and consequently
\begin{align*} \left| \int \psi \, d\mu_{I} - \int \psi \, d\nu_{I} \right| & \leq \left| \int \psi \, d\mu_{I} - \int \psi \, dg_{I} \right| + \alpha_{g,\nu}(I)\\
& = \frac{1}{\mu(I)} \left| \int_{I} (\psi \circ T_{I}) \, db \right| + \alpha_{g,\nu}(I), \quad I \ni x,  \end{align*}
for any $1$-Lipschitz function $\psi \colon \R \to \R$ supported on $[0,1]$. Using the zero-mean property of the measures $b_{J}$, estimate further as follows:
\begin{displaymath} \left| \int (\psi \circ T_{I}) \, db \right| \leq \sum_{J \in \calB(I)} \left| \int (\psi \circ T_{I}) \, db_{J} \right| = \sum_{J \in \calB(I)} \left| \int [(\psi \circ T_{I}) - (\psi \circ T_{I}(x_{J})] \, db_{J} \right|, \end{displaymath}
where $\calB(I) := \{J \in \calB : J \subset I\}$, and $x_{J}$ is the midpoint of $J$. Using the fact that $\psi$ is $1$-Lipschitz, one has
\begin{displaymath} \frac{1}{\mu(I)} \left| \int_{I} (\psi \circ T_{I}) \, db \right| \leq \frac{1}{\mu(I)} \left| \int [(\psi \circ T_{I}) - (\psi \circ T_{I}(x_{J})] \, db_{J} \right| \leq \frac{\ell(T_{I}(J))}{\mu(I)} \|b_{J}\| \lesssim \frac{\ell(J)\mu(J)}{\ell(I)\mu(I)}, \end{displaymath}
and finally
\begin{displaymath} \calS_{\calD,\nu}^{2}(\mu)(x) \lesssim \calS_{\calD,\nu}(g)^{2}(x) + \sum_{x \in I \in \calG} \left(\sum_{J \in \calB(I)} \frac{\ell(J)\mu(J)}{\ell(I)\mu(I)} \right)^{2} =: \calS_{\calD,\nu}(g)^{2}(x) + S^{2}(x). \end{displaymath}
Since $\calS_{\calD,\nu}(g)$ is finite $g$ almost everywhere by Corollary \ref{cor1}, and in particular $\calS_{\calD,\nu}(g)(x) < \infty$ for $\mu$ almost every $x \in G$, it remains to prove that $S(x) < \infty$ for $\mu$ almost every $x \in \R$. First, note that
\begin{displaymath} \sum_{J \in \calB(I)} \frac{\ell(J)\mu(J)}{\ell(I)\mu(I)} \leq \frac{1}{\mu(I)} \sum_{J \in \calB(I)} \mu(J) \leq 1, \end{displaymath}
as the intervals in $\calB(I)$ are disjoint. Consequently,
\begin{align*} \int S^{2} \, d\mu & \leq \int \sum_{x \in I \in \calG} \sum_{J \in \calB(I)} \frac{\ell(J)\mu(J)}{\ell(I)\mu(I)} \, d\mu(x) = \sum_{I \in \calG} \sum_{J \in \calB(I)} \frac{\ell(J)\mu(J)}{\ell(I)}\\
& = \sum_{J \in \calB} \mu(J) \sum_{J \subset I \in \calG} \frac{\ell(J)}{\ell(I)} \lesssim \sum_{J \in \calB} \mu(J) \leq \|\mu\| < \infty. \end{align*}  
It follows that $S^{2}(x) < \infty$ for $\mu$ almost every $x \in \R$. This completes the proof of Lemma \ref{lemma2}, and Theorem \ref{main}(a). \end{proof}

\subsection{Bounding the non-dyadic square function} It remains to prove Theorem \ref{mainCont}(a). Assume that $\mu,\nu$ are Radon measures on $\R$, with $\nu$ doubling, and recall that $\calS_{\nu}(\mu)$ is the square function
\begin{displaymath} \calS_{\nu}^{2}(\mu)(x) = \int_{0}^{1} \alpha_{s,\mu,\nu}^{2}(B(x,r)) \, \frac{dr}{r}, \qquad x \in \R. \end{displaymath}
The claim is that $\calS_{\nu}(\mu)$ is finite $\mu_{a}$ almost everywhere; since this is a local problem, one may assume that $\mu$ is a finite measure. Now, as in Remark \ref{AinftyRemark} (or see \cite[Section 5]{MTT}), pick a finite number of dyadic systems $\calD_{1},\ldots,\calD_{N}$ with the following property: for any interval $I \subset \R$, there exists $j \in \{1,\ldots,N\}$, depending on $I$, and an interval $J \in \calD_{j}$ such that $I \subset J_{i}$ and $|J_{i}| \sim |I|$. As a little technical point, we actually need to restrict $\calD_{j}$ to intervals of length at most one, so also the defining property above only holds for intervals $I \subset \R$ of length $|I| \leq r_{0}$, say.

Then, apply Lemma \ref{lemma2} to each of the corresponding square functions $\calS_{\calD_{j},\nu}(\mu)$ to infer the following: 
\begin{displaymath} \calS_{\calD,\nu}(\mu)(x) := \sum_{j = 1}^{N} \calS_{\calD_{j},\nu}(\mu)(x) < \infty \end{displaymath}
for $\mu_{a}$ almost every $x \in \R$ (note that $\nu$ is dyadically doubling relative to every $\calD_{j}$). So, it suffices to argue that $\calS_{\calD,\nu}(\mu)$ dominates $\calS_{\nu}(\mu)$. Using the stability of the smooth $\alpha$-numbers, and the fact that they are dominated by the regular $\alpha$-numbers whenever $\nu$ is doubling (see Proposition \ref{basicProperties}), one has
\begin{displaymath} \alpha^{2}_{s,\mu,\nu}(B(x,r)) \lesssim \alpha_{\mu,\nu}^{2}(I^{j}_{x,r}), \qquad x \in \R, \: 0 < r < r_{0}, \end{displaymath}
where $j \in \{1,\ldots,N\}$, and $I_{x,r}^{j} \in \calD_{j}$ is a dyadic interval of length at most one, satisfying $x \in B(x,r) \subset I_{x,r}$ and $|I_{x,r}| \sim r$. The existence follows from the construction of the systems $\calD_{j}$. It is now clear that $\calS_{\nu}(\mu) \lesssim \calS_{\calD,\nu}(\mu)$, and the proof of Theorem \ref{mainCont}(a) is complete.  

\begin{remark} Lemma 5.4 in \cite{ADT2} implies that 
\begin{displaymath} \int_{1/4}^{1/2} \alpha_{\mu,\nu}(B(0,t)) \, dt \lesssim \alpha_{s,\mu,\nu}(B(0,1)), \end{displaymath}
whenever $\nu$ is doubling, and $\nu(B(0,1/4)) > 0$, $\mu(B(0,1/4)) > 0$. So, at the level of $L^{1}$-averages over scales, the smooth and regular $\alpha$-numbers are comparable. One would need a similar comparison at the level of $L^{2}$-averages to answer Question \ref{Q1}. \end{remark}

\end{document}